\documentclass{article}
\usepackage{amsmath}
\usepackage{amsthm}
\usepackage{graphics}
\usepackage{epsfig}
\usepackage{stmaryrd}

\newtheorem{theorem}{Theorem}[section]

\newtheorem{lem}[theorem]{Lemma}
\newtheorem{prop}[theorem]{Proposition}

\theoremstyle{definition}

\newtheorem{rem}[theorem]{Remark}

\numberwithin{equation}{section}

\usepackage{amssymb}
\usepackage{amsfonts}
\usepackage{psfrag}
\usepackage{url}
\usepackage{mathtools}
\usepackage{color}
\usepackage{tikz-cd}

\usepackage[makeroom]{cancel}

\usepackage{stmaryrd}
\DeclareMathOperator*{\bigtimes}{\vartimes}

\newcommand{\K}{\mathbb{K}}
\newcommand{\N}{\mathbb{N}}
\newcommand{\R}{\mathbb{R}}

\newcommand{\Z}{\mathbb{Z}}
\newcommand{\Q}{\mathbb{Q}}
\newcommand{\C}{\mathbb{C}}

\newcommand{\cC}{\mathcal{C}}
\newcommand{\cH}{\mathcal{H}}
\newcommand{\cK}{\mathcal{K}}
\newcommand{\cL}{\mathcal{L}}

\newcommand{\cR}{\mathcal{R}}

\newcommand{\cG}{\mathcal{G}}

\begin{document}

\title{On Lipschitz equivalence of finite-dimensional linear flows}
\author{Arno Berger and Anthony Wynne}

\date{\today}
\maketitle

\begin{abstract}
\noindent
Two flows on a finite-dimensional normed space $X$ are Lipschitz
equivalent if some homeomorphism $h$ of $X$ that is bi-Lipschitz near the
origin preserves all orbits, i.e., $h$ maps each orbit onto an
orbit. A complete classification by Lipschitz equivalence is
established for all linear flows on $X$, in terms of basic linear
algebra properties of their generators. Utilizing equivalence instead
of the much more restrictive conjugacy, the classification theorem
significantly extends known results. The analysis is entirely
elementary though somewhat intricate. It highlights, more clearly than
does the existing literature, the fundamental roles played by
linearity and finite-dimensionality.   
\end{abstract}
\hspace*{6.6mm}{\small {\bf Keywords.} Equivalence between flows,
  linear flow, Lipschitz similarity, distortion points.}

\noindent
\hspace*{6.6mm}{\small {\bf MSC2020.} 34A30, 34C41, 34D08, 37C15.}


\section{Introduction}\label{sec1}

Let $X\ne \{0\}$ be a finite-dimensional normed space over $\R$ and $\varphi$
a flow on $X$, i.e., $\varphi : \R \times X \to X$ is
continuous with $\varphi(t+s , x) = \varphi \bigl( t, \varphi (s,x)
\bigr)$ and $\varphi(0,x) = x$ for all $t,s\in \R$, $x\in X$. A
fundamental question throughout dynamics is that of classification:
When, precisely, are two flows $\varphi$, $\psi$ on $X$ {\em the
  same\/}? Taking a geometrically motivated approach to this question,
say that $\varphi$, $\psi$ are {\bf 
  equivalent}, in symbols $\varphi \stackrel{0}{\thicksim} \psi$, if there exists a
homeomorphism $h:X\to X$ with $h(0)=0$ that maps each $\varphi$-orbit
$\varphi_{\R}(x):= \{\varphi (t, x):t\in \R\}$ onto
a $\psi$-orbit, i.e., 
\begin{equation}\label{eq1_1}
  h \bigl( \varphi_{\R} (x) \bigr) = \psi_{\R} \bigl( h(x)\bigr)  \qquad
  \forall x \in X \, .
\end{equation}
If $h$, $h^{-1}$ both are H\"{o}lder continuous (or Lipschitz
continuous, differentiable, linear) then $\varphi$, $\psi$ are
said to be {\bf H\"{o}lder} (or {\bf Lipschitz}, {\bf differentiably},
{\bf linearly}) {\bf equivalent}, in symbols $\varphi 
\stackrel{1^-}{\thicksim} \psi$ (or $\varphi
\stackrel{1}{\thicksim} \psi$, $\varphi
\stackrel{{\sf diff}}{\thicksim} \psi$, $\varphi
\stackrel{{\sf lin}}{\thicksim} \psi$). Plainly $\stackrel{\bigstar}{\thicksim}$
yields an equivalence relation 
for each $\bigstar\in \{0,1^-,1, {\sf diff}, {\sf lin}\}$, thereby establishing
five natural classifications of all flows on $X$. In addition,
consider the following, much more
restrictive form of equivalence: Say that $\varphi$, $\psi$ are {\bf conjugate}, in symbols
$\varphi \stackrel{0}{\cong} \psi$, if 
\begin{equation}\label{eq1_2}
  h \bigl( \varphi  (t,x) \bigr) = \psi \bigl( t,h(x)\bigr)  \qquad
  \forall t\in \R , x \in X \, .
\end{equation}
Notice that (\ref{eq1_2}) implies (\ref{eq1_1}) but most definitely
not vice versa, and analogously defined
$\stackrel{\bigstar}{\cong}$ again yields an equivalence relation for
each $\bigstar \in \{0, 1^-, 1, {\sf diff}, {\sf lin}\}$. Simple examples
show that no two of these equivalences, or the
classification established by them, coincide, not even when $\mbox{\rm
  dim}\, X=1$.

Building on the classical literature briefly reviewed below, the
present article concludes the authors' earlier work \cite{BW, BW2} 
by carrying out a comprehensive analysis of Lipschitz equivalence and conjugacy
for {\em linear\/} flows. Recall that a flow $\varphi$ on $X$ is {\bf linear} if the time-$t$
map $\varphi_t = \varphi (t,\cdot):X\to X$ is linear, or equivalently
if $\varphi_t = e^{t A^{\varphi}}$,
for every $t\in \R$, with a (unique) linear operator $A^{\varphi}$ on
$X$ called the {\bf generator} of $\varphi$. Henceforth, upper case Greek letters
$\Phi$, $\Psi$ are used exclusively to denote linear flows. The challenge, then, is to
characterize $\Phi \stackrel{1}{\thicksim} \Psi$ and $\Phi
\stackrel{1}{\cong} \Psi$ in terms of basic linear algebra properties
of $A^{\Phi}$, $A^{\Psi}$. Correspondingly the main result of this
article, Theorem \ref{thm1x} below, can be
viewed as a {\bf Lipschitz classification theorem}. To preview the result,
note that every linear flow $\Phi$ on $X$ determines unique
decompositions $\Phi \stackrel{{\sf lin}}{\cong}  \Phi_{\sf D} \times \Phi_{\sf
  AD} 
\stackrel{{\sf lin}}{\cong} \Phi_{\sf S} \times \Phi_{\sf C} \times
\Phi_{\sf U}$ into a {\em diagonal\/}
and {\em anti-diagonal\/} part, as well as into a
{\em stable}, {\em central}, and {\em unstable\/} part; see Section \ref{sec2a} below
for formal details. For convenience denote by
$\Phi_{*\alpha}$ the linear flow generated by $\alpha A^{\Phi}$, for
any $\alpha \in \R \setminus \{0\}$. Thus, $\Phi_{*\alpha}$ simply is
$\Phi$ with its first variable (``time'') rescaled by $\alpha$. Finally, the statement involves
the concepts of {\em Lyapunov\/} and {\em Lipschitz
similarity}, introduced rigorously in Section
\ref{sec2a} also. For now, simply say that two linear operators are {\bf
  Lyapunov similar} if they (more precisely, the flows they generate) have the same
Lyapunov exponents, with matching multiplicities, and say that they
are  {\bf Lipschitz similar} if they are Lyapunov similar and their
anti-diagonal parts are similar.

\begin{theorem}\label{thm1x}
Let $\Phi$, $\Psi$ be linear flows on $X$. Then each of the following
four statements implies the other three:
\begin{enumerate}
\item $\Phi \stackrel{1}{\thicksim} \Psi$, i.e., $\Phi$, $\Psi$ are
  Lipschitz equivalent;
  \item there exists $\alpha
    \in \R \setminus \{0\}$ so that $\Phi \stackrel{1}{\cong}
    \Psi_{*\alpha}$, i.e., $\Phi$, $\Psi_{*\alpha}$ are Lipschitz conjugate;
\item there exists $\beta
    \in \R \setminus \{0\}$ so that $A^{\Phi}$, $\beta A^{\Psi}$ are
    Lyapunov similar while $A^{\Phi_{\sf  AD}}$, $\beta A^{\Psi_{\sf
        AD}}$ as well as $A^{\Phi_{\sf  C}}$, $\beta A^{\Psi_{\sf
        C}}$ are similar;
\item there exists $\gamma
    \in \R \setminus \{0\}$ so that $A^{\Phi}$, $\gamma A^{\Psi}$ are
    Lipschitz similar while $A^{\Phi_{\sf  C}}$, $\gamma A^{\Psi_{\sf
        C}}$ are similar.
  \end{enumerate}
  Moreover, $\Phi \stackrel{1}{\cong}\Psi$ if and
only if $A^{\Phi}$, $A^{\Psi}$ are Lipschitz similar while $A^{\Phi_{\sf  C}}$, $ A^{\Psi_{\sf
        C}}$ are similar.
\end{theorem}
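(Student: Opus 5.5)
The proof will run through the cycle (ii)$\Rightarrow$(i)$\Rightarrow$(iii)$\Rightarrow$(iv)$\Rightarrow$(ii). The final assertion about $\stackrel{1}{\cong}$ will come out of the same arguments with the time-rescaling fixed to $1$.

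The implication (ii)$\Rightarrow$(i) is immediate. A conjugacy between $\Phi$ and $\Psi_{*\alpha}$ maps orbits onto orbits, and $\Psi_{*\alpha}$ has exactly the same orbits as $\Psi$.

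The implication (iii)$\Rightarrow$(iv) is essentially bookkeeping. By definition, Lipschitz similarity means Lyapunov similarity together with similarity of the anti-diagonal parts, so one can take $\gamma=\beta$.

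For (iv)$\Rightarrow$(ii) I will take $\alpha=\gamma$ and build the Lipschitz conjugacy blockwise along the decomposition $\Phi \stackrel{{\sf lin}}{\cong} \Phi_{\sf S}\times\Phi_{\sf C}\times\Phi_{\sf U}$. Products of bi-Lipschitz conjugacies are again bi-Lipschitz conjugacies, so each block can be handled separately.
\begin{itemize}
\item On the central part, similarity of $A^{\Phi_{\sf C}}$ and $\gamma A^{\Psi_{\sf C}}$ gives a linear conjugacy directly.
\item On the stable and unstable parts, first split further into diagonal and anti-diagonal pieces. The anti-diagonal pieces are similar, hence linearly conjugate.
\item The diagonal pieces have the same Lyapunov exponents with the same multiplicities. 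Up to a linear change of coordinates they are therefore products of scalar flows $e^{\lambda t}$, possibly times rotations.
\end{itemize}
For the diagonal pieces I would use the radial construction $h(x)=\Psi_{\tau(x)}(Lx)$. Here $L$ is linear and $\tau$ is defined through a suitable norm that is strictly monotone along orbits, in the style of \cite{BW}. The equality of the exponents is what should make $h$ and $h^{-1}$ bi-Lipschitz near the origin, since the radial distortion then stays bounded.

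The hard direction is (i)$\Rightarrow$(iii). Given a homeomorphism $h$ that maps orbits to orbits and is bi-Lipschitz near $0$, the steps are as follows.
\begin{enumerate}
\item Show that $h$ respects the stable, central and unstable subspaces, possibly interchanging stable and unstable. Such an interchange corresponds to $\beta<0$, and the sign of $\beta$ is fixed here. This step is topological: these subspaces are characterized by the behaviour of orbits approaching or leaving $0$.
\item Show that the central blocks are similar after scaling. On $X_{\sf C}$ all orbits are bounded or grow polynomially. A Lipschitz orbit equivalence must then preserve the periods, up to a common factor, and the Jordan structure. I would prove this by comparing the rates at which orbits near $0$ return close to themselves.
\item Recover the Lyapunov exponents with their multiplicities from $h$. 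A bi-Lipschitz $h$ preserves the growth rates $\lim t^{-1}\log\|x(t)\|$ along orbits, measured in terms of distance to the origin rather than time. Since time is reparametrized, only the ratios of exponents are invariant. A common scaling factor $\beta$ then has to be extracted, and this factor must be shown to agree with the one obtained on the central part.
\item Show that the anti-diagonal parts (nontrivial Jordan blocks or rotations with respect to the Lyapunov splitting) are similar after scaling by $\beta$. I expect this to be the main obstacle. My first approach would be to use distortion points: these are points where the orbit geometry near $0$ exhibits polynomial shear or angular winding that a bi-Lipschitz map must preserve quantitatively. From them I would read off the nilpotent Jordan sizes and the rotation frequencies, the latter relative to the exponent and scaled by $\beta$.
\end{enumerate}

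For the conjugacy statement, the same argument applies with time preserved, which forces $\beta=1$. The converse is (iv)$\Rightarrow$(ii) with $\gamma=1$.
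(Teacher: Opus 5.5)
Your ordering of the implications and the easy steps ((ii)$\Rightarrow$(i), (iii)$\Rightarrow$(iv)) are fine. The heart of the theorem, however, is your step 4 of (i)$\Rightarrow$(iii), and there you give only a statement of intent; the mechanism you propose cannot deliver it. Distortion points, in the paper's sense (nearby orbits that stay non-tangential to $\Phi_{\R}x$ under \emph{every} reparametrization), are blind to rotation. For a stable flow, $D^{\Phi}=L^{\Phi}_{m^{\Phi}-1}(\lambda^{\Phi})$ by Theorem \ref{lem5bb}, and the refined Lyapunov spaces $L^{\Phi}_m(s)$ do not depend on the imaginary parts of eigenvalues. Iterating their invariance yields $h\bigl(L^{\Phi}_m(\alpha s)\bigr)=L^{\Psi}_m(s)$, hence the Jordan block sizes per exponent, but no frequencies. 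Worse, frequencies are not invariants in general: $J_1(-1+ib)$ is Lipschitz conjugate to $-I_2$ for every $b$, via $y\mapsto R_1(b\log|y|)\,y$ (Lemma \ref{lem4_1}). Only frequencies carried by Jordan blocks of size $m\ge 2$ can be recovered, and your plan, which lists ``rotations'' alongside nontrivial Jordan blocks, does not say what separates the two cases.

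Two ingredients are missing.
\begin{enumerate}
\item A sharpened time-change bound $\sup_{t\ge 0}|\tau_x(t)-\alpha t|<\infty$ (Theorem \ref{lem3aol1}). It comes from comparing $|\Phi_t x|\asymp e^{st}t^m$ with $|\Psi_{\tau_x(t)}h(x)|\asymp e^{s\tau_x(t)}\tau_x(t)^m$ once $h$ is known to preserve each $L^{\Phi}_m(s)$. The H\"older information $\tau_x(t)/t\to\alpha$ is too weak for what follows.
\item An actual frequency detector. Take a maximal block $J_m(-1+ib)$ with $m\ge 2$, a point $x\notin\ker K^{m-1}$, and $y=K^{m-1}x$. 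Asymptotically, $\Phi_t x$ and $\Phi_{\rho_x(t)}y$ have equal norms, and their directions differ only by the phase mismatch between $R(t)$ and $R(\rho_x(t))$. The lag $t-\rho_x(t)=(m-1)\log t+O(1)$ is \emph{unbounded}, so the relative phase sweeps around. Consequently, the times at which the two orbits become asymptotically tangent encode the periods of a bounded auxiliary flow built from the top-level rotations. Bi-Lipschitz maps preserve asymptotic tangency (Lemma \ref{prop3}), and ingredient 1 keeps the lag on the $\Psi$-side equal to $(m-1)\log t+O(1)$. Hence $h$ must match the minimal periods of the two auxiliary flows pointwise, and a period-matching result (Proposition \ref{prop1zz}) gives $|b_j|=|c_{g(j)}|$. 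Smaller blocks are handled by restricting to distortion spaces and peeling off. For $m=1$ the lag is bounded, which is exactly why those frequencies are lost.
\end{enumerate}

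Two lesser points. First, your steps 1--3 amount to re-proving the H\"older classification: Proposition \ref{prop1za}, including the common $\beta$ for the central and hyperbolic parts, together with $h\bigl(L^{\Phi}(\alpha s)\bigr)=L^{\Psi}(s)$ on stable parts. Your step 3 is not an argument as it stands, because with orbit-dependent $\tau_x$ one cannot directly compare $\lim_{t\to\infty}t^{-1}\log|\Phi_t x|$ across orbits. You should invoke that known result rather than sketch it. Second, (iv)$\Rightarrow$(ii) otherwise follows the paper's route, but the radial construction must be carried out separately for each Lyapunov exponent, or for each block $J_1(\lambda+ib)$, where it is just $y\mapsto R_1(-\tfrac{b}{\lambda}\log|y|)\,y$. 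A single norm-defined hitting time on the whole diagonal piece fails. For instance, take $\mbox{\rm diag}\,[J_1(-1+ib),J_1(-2+ic)]$ with $b\ne 0$ and the Euclidean norm. Then $h(x)=\Psi_{-T(x)}\Phi_{T(x)}x=\mbox{\rm diag}\,[R_1(bT(x)),R_1(cT(x))]\,x$ has difference quotients of order $e^{-T(x)}\to\infty$ under perturbations of $x_2$ at points with $|x_2|\approx|x_1|^2$. So it is not Lipschitz near $0$.
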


To put Theorem \ref{thm1x} in context, it is instructive to compare it
to its differentiable (hence slightly more restrictive) and H\"{o}lder
(hence slightly less restrictive) counterparts; stated here without
proofs, these results have been presented by the authors in detail
(though sometimes couched in slightly different terminology and
notation) in \cite{BW} and \cite{BW2} respectively.

\begin{prop}\label{prop1zb}
Let $\Phi$, $\Psi$ be linear flows on $X$. Then each of the following
five statements implies the other four:
\begin{enumerate}
\item $\Phi \stackrel{\sf lin}{\thicksim} \Psi$, i.e., $\Phi$, $\Psi$ are
  linearly equivalent;
 \item $\Phi \stackrel{\sf diff}{\thicksim} \Psi$, i.e., $\Phi$, $\Psi$ are
  differentiably equivalent;
  \item there exists $\alpha
    \in \R \setminus \{0\}$ so that $\Phi \stackrel{\sf lin}{\cong}
    \Psi_{*\alpha}$, i.e., $\Phi$, $\Psi_{*\alpha}$ are linearly
    conjugate;
     \item there exists $\beta
    \in \R \setminus \{0\}$ so that $\Phi \stackrel{\sf diff}{\cong}
    \Psi_{*\beta}$, i.e., $\Phi$, $\Psi_{*\beta}$ are differentiably conjugate;
\item there exists $\gamma
    \in \R \setminus \{0\}$ so that $A^{\Phi}$, $\gamma A^{\Psi}$ are similar.
  \end{enumerate}
  Moreover, $\Phi \stackrel{\sf lin}{\cong} \Psi$ if and only if $\Phi
  \stackrel{\sf diff}{\cong} \Psi$ if and only if $A^{\Phi}$,
  $A^{\Psi}$ are similar. 
\end{prop}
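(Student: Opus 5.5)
The implications that go "down" the list are immediate or classical, so I will arrange the argument as the cycle (iii)$\Rightarrow$(v)$\Rightarrow$(i)$\Rightarrow$(ii)$\Rightarrow$(v), with (iv) attached to the same loop. The step (iii)$\Rightarrow$(i) holds because linear maps are homeomorphisms, and a conjugacy to $\Psi_{*\alpha}$ maps $\Phi$-orbits onto $\Psi$-orbits, since $\Psi_{*\alpha}$ and $\Psi$ have the same orbits. Similarly, (iii)$\Rightarrow$(iv)$\Rightarrow$(ii), and (i)$\Rightarrow$(ii) is trivial. For (v)$\Rightarrow$(iii), pick an invertible $T$ with $TA^{\Phi}T^{-1}=\gamma A^{\Psi}$. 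Then $Te^{tA^{\Phi}}=e^{t\gamma A^{\Psi}}T$, so $T$ linearly conjugates $\Phi$ and $\Psi_{*\gamma}$, and we take $\alpha=\gamma$. The "moreover" part is the case $\gamma=1$ of the same computation, together with the argument below carried out with a conjugacy in place of an equivalence.

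The substance is (ii)$\Rightarrow$(v). Let $h$ be a diffeomorphism with $h(0)=0$ mapping $\Phi$-orbits onto $\Psi$-orbits, and set $H=Dh(0)$, which is invertible. The plan is to show that $H$ maps $\Phi$-orbits onto $\Psi_{*\gamma}$-orbits in a way that forces $HA^{\Phi}H^{-1}=\gamma A^{\Psi}$. I will linearize by blowing up at the origin: $h_s(x)=s^{-1}h(sx)\to Hx$ locally uniformly as $s\to 0^+$. Since $\Phi$ and $\Psi$ are linear, $x\mapsto sx$ maps orbits to orbits, so each $h_s$ is again an orbit equivalence between $\Phi$ and $\Psi$. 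In the limit one expects $H$ to send $\Phi$-orbits into closures of $\Psi$-orbits. To get genuine orbit-to-orbit behaviour, I will use the flow-box and time-reparametrization description. Locally $h(\Phi(t,x))=\Psi(\tau(t,x),h(x))$ with $\tau$ continuous and monotone in $t$, and differentiability of $h$ lets us differentiate along orbits at non-equilibrium points. This gives
\[
Dh(x)A^{\Phi}x=\rho(x)\,A^{\Psi}h(x),\qquad \rho(x)\neq 0,
\]
wherever $A^{\Phi}x\neq 0$. Replacing $x$ by $sx$, dividing by $s$ and letting $s\to0$ yields $HA^{\Phi}x=\rho_0(x)A^{\Psi}Hx$ on an open dense set, where $\rho_0(x)$ is a limit of $\rho(sx)$; passing to a subsequence if necessary ensures this limit exists.

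The main obstacle is showing that $\rho_0$ is a single nonzero constant $\gamma$, and in particular controlling the limit of $\rho(sx)$. Write $B=H^{-1}A^{\Psi}H$. Then $A^{\Phi}x$ and $Bx$ are parallel for all $x$ in an open dense set, hence for all $x$ by continuity wherever both are nonzero. A standard linear-algebra lemma says that if $A^{\Phi}x\in\R Bx$ for every $x$, then $A^{\Phi}=\gamma B$ or both operators have rank at most one with a common range. The rank-one and degenerate cases will be handled by hand, and the same reasoning with the roles of $\Phi$ and $\Psi$ swapped excludes $\gamma=0$. The difficulty is that $\rho(sx)$ might degenerate to $0$ or $\infty$ as $s\to0$. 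My first attempt to rule this out is to bound $\rho$ using the equality $h\circ\Phi_{\R}=\Psi_{\R}\circ h$ near periodic or unbounded orbits together with $\|Dh^{\pm1}\|$ bounded near $0$. If that fails, I will instead apply the argument to the time-$1$ linearizations on the generalized eigenspaces separately, and use that $H$ must preserve the stable, central and unstable splittings.
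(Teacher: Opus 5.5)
The formal part of your cycle is fine: $Te^{tA^{\Phi}}=e^{t\gamma A^{\Psi}}T$ gives (v)$\Rightarrow$(iii), and (iii)$\Rightarrow$(iv)$\Rightarrow$(ii) and (iii)$\Rightarrow$(i)$\Rightarrow$(ii) are immediate. (The paper gives no proof of this proposition; it quotes it from the authors' earlier work.)

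The genuine gap is in (ii)$\Rightarrow$(v), and it comes from misreading the hypothesis. Membership $h\in\cH_{\sf diff}$ only means that $h$ is a homeomorphism with $h(0)=0$ such that $h$ and $h^{-1}$ are differentiable \emph{at the single point} $0$. Nothing at all is assumed at $x\neq 0$. Your identity $Dh(x)A^{\Phi}x=\rho(x)A^{\Psi}h(x)$ requires $Dh(x)$ for $x\neq 0$, and its invertibility to get $\rho(x)\neq 0$. Your passage $s\to 0^+$ requires $Dh(sx)\to Dh(0)$, i.e.\ continuity of the derivative at $0$. None of this is available, so your argument covers at best $C^1$-diffeomorphisms, a much weaker statement.

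Under that stronger hypothesis your worry about $\rho(sx)$ would actually be moot. Wherever $A^{\Psi}Hx\neq 0$, the limit of $\rho(sx)$ is forced by $Dh(sx)A^{\Phi}x\to HA^{\Phi}x\neq 0$. The rank-one alternative is excluded by $H\ker A^{\Phi}=\ker A^{\Psi}$, since $h$ maps the subspace $\mbox{\rm Fix}\,\Phi=\ker A^{\Phi}$ onto $\mbox{\rm Fix}\,\Psi=\ker A^{\Psi}$.

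Your ``moreover'' part inherits the same flaw, but there the repair is one line. The chain rule at $0$ applied to $h\circ\Phi_t=\Psi_t\circ h$ gives $H\Phi_t=\Psi_t H$ for all $t$, hence $HA^{\Phi}=A^{\Psi}H$.

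With differentiability only at $0$, the one piece of information you can use is your blow-up $h_s\to H$. The missing idea is a proof that $H$ itself is an orbit equivalence, i.e.\ control of the times in $h_s(\Phi_t x)=\Psi_{\tau_{sx}(t)}h_s(x)$ as $s\to 0^+$. Once $\Phi\stackrel{H}{\thicksim}\Psi$ is known, your parallelism argument runs verbatim with the linear map $H$ in place of $h$, since differentiating along orbits is then legitimate. So this step is the real content of the proposition.

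In directions with exponential behaviour the step is easy. If, say, $\Psi$ is stable, then $h_s(x)\to Hx\neq 0$ and $\Psi_{\tau_{sx}(t)}h_s(x)\to H\Phi_t x\neq 0$ force $\tau_{sx}(t)$ to stay bounded, and a subsequential limit yields $H\Phi_t x\in\Psi_{\R}(Hx)$.

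On the central part, however, there are two difficulties:
\begin{enumerate}
\item $\tau_{sx}(t)$ may be unbounded, and the limit then only yields $H\Phi_t x\in\overline{\Psi_{\R}(Hx)}$. For a bounded flow with rationally independent frequencies this closure is a torus, not an orbit.
\item The central time scale must be tied to the hyperbolic one. For example, the flows generated by $\mbox{\rm diag}\,[-1,J_1(i)]$ and $\mbox{\rm diag}\,[-1,J_1(2i)]$ are topologically but not differentiably equivalent. Treating generalized eigenspaces separately, as in your fallback, therefore cannot work.
\end{enumerate}
Your sentence that ``in the limit one expects $H$ to send $\Phi$-orbits into closures of $\Psi$-orbits'' sits exactly where this work is needed, and even that weaker statement would not suffice.
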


\begin{prop}\label{prop1za}
Let $\Phi$, $\Psi$ be linear flows on $X$. Then each of the following
three statements implies the other two:
\begin{enumerate}
\item $\Phi \stackrel{1^-}{\thicksim} \Psi$, i.e., $\Phi$, $\Psi$ are
  H\"{o}lder equivalent;
  \item there exists $\alpha
    \in \R \setminus \{0\}$ so that $\Phi \stackrel{1^-}{\cong}
    \Psi_{*\alpha}$, i.e., $\Phi$, $\Psi_{*\alpha}$ are H\"{o}lder conjugate;
\item there exists $\beta
    \in \R \setminus \{0\}$ so that $A^{\Phi}$, $\beta A^{\Psi}$ are
    Lyapunov similar while $A^{\Phi_{\sf C}}$, $\beta A^{\Psi_{\sf
        C}}$ are similar.
  \end{enumerate}
  Moreover, $\Phi \stackrel{1^-}{\cong} \Psi$ if and only if
  $A^{\Phi}$, $A^{\Psi}$ are Lyapunov similar while $A^{\Phi_{\sf C}}$,
  $ A^{\Psi_{\sf C}}$ are similar.
\end{prop}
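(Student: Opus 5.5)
The plan is to prove the cycle (i)$\Rightarrow$(iii)$\Rightarrow$(ii)$\Rightarrow$(i) and then obtain the conjugacy statement as a by-product. The implication (ii)$\Rightarrow$(i) is immediate, since a conjugacy between $\Phi$ and $\Psi_{*\alpha}$ maps $\Phi$-orbits onto $\Psi_{*\alpha}$-orbits, and these are exactly the $\Psi$-orbits. Conversely, the conjugacy statement for $\alpha=1$ will come out of the construction in (iii)$\Rightarrow$(ii), together with a refinement of the invariance argument in (i)$\Rightarrow$(iii). For conjugacies no time rescaling is available, so the exponents themselves, not merely their ratios, must match.

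For (iii)$\Rightarrow$(ii), replace $\Psi$ by $\Psi_{*\beta}$ and assume $\beta=1$. If $\beta<0$, time reversal interchanges the stable and unstable parts, and this is harmless. Using the decomposition $\Phi\stackrel{\sf lin}{\cong}\Phi_{\sf S}\times\Phi_{\sf C}\times\Phi_{\sf U}$ from Section \ref{sec2a}, it suffices to build Hölder conjugacies factor by factor, because a product of conjugacies that are Hölder for every exponent $<1$ is again such a conjugacy.

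The central parts are similar, hence linearly conjugate. For the stable parts, split further along the Lyapunov spectrum into subspaces $X_\lambda$ on which every nonzero orbit has exact exponential rate $\lambda<0$. By Lyapunov similarity the corresponding subspaces for $\Phi$ and $\Psi$ have the same dimension. On $X_\lambda$, choose a Lyapunov-adapted norm so that every nonzero orbit crosses the unit sphere $S$ exactly once and transversally. Fix a linear isomorphism $L$ that maps the unit sphere for $\Phi$ onto the unit sphere for $\Psi$ after radial projection, and set $h(\Phi_t x)=\Psi_t(Lx)$ for $x\in S$, with $h(0)=0$.

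The key estimate is that $|\Phi_t x|$ and $|\Psi_t Lx|$ both equal $e^{\lambda t+o(|t|)}$ uniformly on $S$. From this, $h$ and $h^{-1}$ are Hölder near $0$ with every exponent $<1$, because the subexponential discrepancies cost only an arbitrarily small loss in the exponent. Away from $0$ the map $h$ is smooth. The unstable part is handled symmetrically.

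For (i)$\Rightarrow$(iii), which I expect to be the main obstacle, let $h$ be a Hölder equivalence. First, topological invariance gives that $h$ preserves the set of orbits bounded in forward time, or else swaps it with the set bounded in backward time. Hence $h$ maps $X_{\sf S}\oplus X_{\sf C}$ onto the corresponding space for $\Psi$, possibly after reversing time; this fixes the sign of $\beta$ and matches $\dim X_{\sf S}$, $\dim X_{\sf U}$. Second, restricted to the central part, a topological equivalence of central linear flows forces similarity of the generators up to a scalar, via the structure of their periodic and quasi-periodic orbits, and this determines $|\beta|$ whenever $X_{\sf C}$ is not trivially rigid.

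Third, to recover the Lyapunov exponents up to the common factor $\beta$, use the ratio invariants
\[
\lim_{t\to\infty}\frac{\log \operatorname{dist}\bigl(\Phi_t x, V\bigr)}{\log|\Phi_t x|}
\]
for points $x$ in the stable part and suitable $\Phi$-invariant subspaces $V$, namely the sums of the faster Lyapunov subspaces. Hölder continuity with every exponent $<1$ preserves such limits exactly, because $\log$ of distances distorts only by factors in $[\theta,1/\theta]$ with $\theta\uparrow1$. Moreover these quantities are orbit-intrinsic, so a time reparametrization along orbits does not affect them. This yields all ratios $\lambda_i/\lambda_j$ with multiplicities, read off from dimensions of the sets on which a given ratio is attained. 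The delicate point is ensuring that one and the same $\beta$ works simultaneously for the stable, unstable and central parts. I would try to link them through orbits in mixed subspaces such as $X_{\sf S}\oplus X_{\sf U}$ and $X_{\sf C}\oplus X_{\sf S}$, where the relative rates of two components are again Hölder-invariant ratios.
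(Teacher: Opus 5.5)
The paper does not reprove this proposition. It imports it from \cite{BW2}, whose main ingredients reappear here as Proposition \ref{prop2bb}, Remark \ref{rem3_1A}(i) and Proposition \ref{prop1zz}. Your (ii)$\Rightarrow$(i) is fine, and (iii)$\Rightarrow$(ii) is sound in outline, with one repair: $|h(x)|=|x|^{1+o(1)}$ plus smoothness off $0$ does not give a H\"older condition near $0$. The Lipschitz analogue of that reasoning fails for the map in Remark \ref{rem6_7}. You need $|Dh(z)|\le C_{\varepsilon}|z|^{-\varepsilon}$ for every $\varepsilon>0$, which does hold on each $X_{\lambda}$ because there $|\Phi_t|=e^{\lambda t+o(|t|)}$ as $t\to\pm\infty$.

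The direction (i)$\Rightarrow$(iii) contains two false steps.

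First, $h$ need not map $X_{\sf S}^{\Phi}\oplus X_{\sf C}^{\Phi}$ onto $X_{\sf S}^{\Psi}\oplus X_{\sf C}^{\Psi}$. Forward-bounded orbits do not characterize this space either, since Jordan blocks in $X_{\sf C}$ give unbounded orbits. Take $\Phi=\Psi$ generated by $\mbox{\rm diag}\,[1,J_2]$ on $\R^3$, and let $h(x)=x+e^{-1/x_3^2+x_2/x_3}e_1$, with $h(x)=x$ if $x_3=0$. This $h$ commutes with every $\Phi_t$ and is $C^1$ near $0$, so $h\in\cH_1$. Yet $h(0,0,x_3)\notin X_{\sf C}^{\Phi}=X_{\sf S}^{\Phi}\oplus X_{\sf C}^{\Phi}$ for $x_3\ne 0$. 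So you cannot restrict $h$ to the central parts. What survives is $h(X_{\sf S}^{\Phi})=X_{\sf S}^{\Psi}$ and $h(X_{\sf U}^{\Phi})=X_{\sf U}^{\Psi}$, possibly after replacing $\Psi$ by $\Psi^*$ (cf.\ Proposition \ref{lemH1}). The central information must come from the topological classification \cite[Thm.\ 1.1]{BW2}, which only gives $A^{\Phi_{\sf C}}$ similar to $\gamma A^{\Psi_{\sf C}}$ for \emph{some} $\gamma$.

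Second, your ratio invariant is vacuous exactly where it is invariant. If $V=L^{\Phi}(s)$ is a sum of faster Lyapunov spaces (these are the spaces $h$ preserves, by Proposition \ref{prop2bb}), then for $x\notin V$ the slowest component of $x$ lies outside $V$. Hence $\mbox{\rm dist}(\Phi_tx,V)\asymp|\Phi_tx|$ and the limit is $1$. For sums of slower spectral subspaces the limit is informative, but those subspaces are not preserved by $h$ (Remark \ref{rem3_1A}(ii)). The mechanism that works is $|h(y)|=|y|^{1+o(1)}$ combined with the reparametrizations $\tau_x$. This is how Proposition \ref{prop2bb} obtains $\Lambda^{\Phi}=\alpha\Lambda^{\Psi}$ with multiplicities, via $h(L^{\Phi}(\alpha s))=L^{\Psi}(s)$.

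The decisive gap is the one you defer: why a single $\beta$ serves the hyperbolic and central parts at once. This is exactly what separates H\"older from topological equivalence. Let $\Phi$, $\Psi$ be generated by $\mbox{\rm diag}\,[-1,J_1(i)]$ and $\mbox{\rm diag}\,[-1,J_1(2i)]$ on $\R^3$. The stable parts coincide, the central parts are similar up to the factor $\frac12$, and $(x_1,z)\mapsto(\mbox{\rm sign}\,x_1\,|x_1|^{1/2},z)$ is an equivalence in $\cH_{1/2}$. Yet (iii) fails: Lyapunov similarity forces $\beta=1$ and central similarity forces $|\beta|=\frac12$. So by the statement you are proving they are not H\"older equivalent, and none of your steps detects this.

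The missing idea is to couple periods to exponents along orbits that spiral onto a cycle.
\begin{enumerate}
\item Take a small $T$-periodic $p\in X_{\sf C}^{\Phi}$ and $z\in X_{\sf S}^{\Phi}\setminus\{0\}$. The orbit of $x=p+z$ approaches $\Phi_{\R}p$ at exponential rate $\lambda_+^{\Phi}(z)$.
\item Since the $\Psi$-orbit of $h(x)$ converges to the compact cycle $h(\Phi_{\R}p)$, of period $T'$ say, one gets $h(x)=q+w$ with $q$ on that cycle and $w\in X_{\sf S}^{\Psi}\setminus\{0\}$.
\item Uniform continuity of $h$ near the cycle forces revolutions to correspond, so $\tau_x(t)/t\to T'/T$.
\item H\"older continuity of every order $<1$ preserves the exponential rate of approach to the cycle, so $\tau_x(t)/t\to\lambda_+^{\Phi}(z)/\lambda_+^{\Psi}(w)$.
\item Choose $z$ with extremal exponent and use $\Lambda^{\Phi_{\sf S}}=\alpha\Lambda^{\Psi_{\sf S}}$ from Proposition \ref{prop2bb}. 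This gives $T'=\alpha T$ for every small periodic $p$, which forces $|\gamma|=\alpha$ whenever $A^{\Phi_{\sf C}}$ has a non-zero eigenvalue.
\end{enumerate}
The link between the stable and unstable parts is \cite[Thm.\ 6.4]{BW2}, cf.\ Remark \ref{rem3_1A}(i).
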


Classifications of linear flows have long been studied in the
literature, notably for {\em hyperbolic\/} flows, that is, for
$\Phi_{\sf C}$, $\Psi_{\sf C}$ being trivial; see, e.g., \cite{Amann, BW,
  Irwin, R} for broad context, as well as \cite{ACK1, AK, DSS, He, LZ, Willems} for
specific studies.
One striking aspect of Theorem
\ref{thm1x} is the fact that (i)$\Rightarrow$(ii). Thus, for {\em linear\/}
flows $\varphi$, $\psi$ validity of (\ref{eq1_1}) always entails
validity of (\ref{eq1_2}), up to a linear, orbit-independent rescaling
of time. This remarkable property, which does not 
seem to be shared by any wider class of flows on $X$, is indicative of
the extraordinary coherence between individual orbits of linear
flows. As far as the authors have been able to ascertain, the
property has not been stated, let alone proved rigorously before,
though it appears to have been part of linear systems folklore for
quite some time; see, e.g., \cite[Rem.\ 7.4]{Willems} as well as
\cite{ACK1, CK, KS}.

With Lipschitz continuity being a fundamental concept throughout
analysis \cite{Hei}, another key aspect of Theorem \ref{thm1x} is its
relation to the differentiable and H\"{o}lder counterparts,
Propositions \ref{prop1zb} and \ref{prop1za} respectively. While
\cite[Rem.\ 2.5]{ACK1} seems to suggest, somewhat misleadingly, that the equivalence
relation $\stackrel{1}{\cong}$ for linear flows simply coincides with $\stackrel{{\sf
    diff}}{\cong}$, the characterization of Lipschitz {\em
  conjugacy\/} contained in Theorem \ref{thm1x} has, in essence, been
established in \cite{KS}; see also \cite{MM} for a related (albeit quite
informal) discussion. Specifically, \cite{KS} argues that
$\stackrel{1}{\cong}$ is ``very close'' to $\stackrel{{\sf
    diff}}{\cong}$, and a crucial role in the argument is subsequently
played by a well-known theorem due to Rademacher which asserts that
every Lipschitz map $h:X\to X$ is differentiable almost
everywhere. However, in
the general setting of Theorem \ref{thm1x}, that is, for mere Lipschitz {\em equivalence}, no
assumptions whatsoever are made regarding the (orientation and
regularity of) re-parametrizations of individual orbits, and hence it appears
doubtful whether Rademacher's theorem can be applied
fruitfully (or at all). Instead, the present
article utilizes a simple geometric idea gleaned from \cite{MM} which
it develops into the basic but consequential concept of {\em
  distortion\/} ({\em points\/}) for stable flows. Aided by this
concept, the article then proceeds to prove, in an entirely elementary
manner, that (i)$\Rightarrow$(iii) in Theorem \ref{thm1x}. From this,
validity of the entire theorem follows rather straightforwardly, as
(iii)$\Leftrightarrow$(iv)$\Rightarrow$(ii), and obviously
(ii)$\Rightarrow$(i).  As indicated where appropriate below, the
elementary approach developed here also helps to address other gaps and
inaccuracies in the existing literature.

\medskip

\begin{rem}
An analogous classification problem presents itself in discrete time,
i.e., for linear operators $A, B:X\to X$ which are {\bf conjugate} (or
{\bf nonlinearly similar} \cite{CappSh, CSSW}), in symbols
$A\stackrel{0}{\cong} B$, if $h(Ax) = Bh(x)$ for some homeomorphism
$h:X\to X$ and all $x\in X$. As in continuous time, it is
natural to consider $\stackrel{\bigstar}{\cong}$ for $\bigstar \in
\{0,1^-,1,{\sf diff}, {\sf lin}\}$, each of which yields an
equivalence relation on all linear operators on $X$. In
straightforward analogy to Proposition \ref{prop1zb}, notice that
$A\stackrel{{\sf lin}}{\cong} B$ if and only if $A\stackrel{{\sf
    diff}}{\cong} B$ if and only if $A$, $B$ are similar. By contrast,
the problem of characterizing $A\stackrel{\bigstar}{\cong} B$ for
$\bigstar \in \{0,1^-, 1\}$, in terms of similarity invariants akin to
Theorem \ref{thm1x}, has turned out to be significantly more challenging than in 
continuous time; see, e.g., \cite{CappSh, CSSW, Cruz, HP, KR} for the
long history of the problem and its many ramifications, with
\cite{Cruz} specifically addressing the Lipschitz case.
\end{rem}

The remainder of this article is organized as follows: Section
\ref{sec2} properly introduces various forms of equivalence and
conjugacy to be studied in subsequent sections, together with some
tailor-made analytical notation. Section \ref{sec2a} briefly reviews a
few basic concepts pertinent to linear flows, notably irreducibility
and Lyapunov exponents, and then discusses the novel concept of
Lipschitz similarity. Section \ref{sec3} defines distortion points and
refined Lyapunov spaces for stable flows which in turn yield crucial Lipschitz
invariants for such flows. Section
\ref{sec4} presents a proof of the main result, Theorem \ref{thm1x},
via a series of preparatory lemmas addressing important special cases
thereof. It also outlines the straightforward extension of Theorem
\ref{thm1x} to {\em complex\/} spaces. Finally, Section \ref{sec6}
highlights several subtle aspects of Lipschitz equivalence by briefly
discussing a novel variant of the concept, referred to as {\em
  pointwise\/} Lipschitz equivalence at $0$.

\section{Analytic preliminaries}\label{sec2}

The familiar symbols $\N$, $\N_0$, $\Q^+$, $\Q$, $\R^+$,
$\R$, $\R \cup \{-\infty, \infty\}$, and $\C$ denote the sets of all
positive whole, non-negative 
whole, positive rational, rational, positive real, real, extended
real, and complex numbers respectively, each with their usual
arithmetic, order (except for $\C$), and
topology. The empty set is $\varnothing$. Every $z\in \C$ can be
written uniquely as $z=a+ib$ where 
$a= {\sf Re}\, z$, $b={\sf Im}\, z$ are real numbers, with complex conjugate
$\overline{z} = a-ib$ and modulus $|z|=\sqrt{a^2+b^2}$. Let $\C^+ =
\{z\in \C: {\sf Re}\, z>0\}$, and given any $v,w \in
\C$ and $Z\subset \C$, let $v+wZ=\{v+w z :z\in Z\}$ for convenience;
thus for example $\C^-:= - \C^+ = \{z\in \C: {\sf Re}\, z<0\}$ and $i\R =
\{ia: a\in \R\}$. As usual, the cardinality (or number of elements) of
any finite set $Z$ is denoted $\# Z$, with $\# \varnothing := 0$.

Throughout, let $X = \R^d$, where the actual value of $d\in \N$ is
either clear from the context or irrelevant. Denote by $|\cdot|$ the
Euclidean norm on $X$, as well as the norm on linear operators on $X$
induced by it; this is solely for convenience, as all concepts and
results discussed herein are independent of any particular
norm. Similarly to \cite{BW, BW2}, the case of a (finite-dimensional)
normed space over $\C$ does not pose any additional challenge and is
only commented on briefly at the end of Section \ref{sec4}. Denote by
$e_1, \ldots , e_d$ the canonical basis of $X$, by $O_X=O_d$ and
$I_X=I_d$ the zero and identity operator (or $d\times d$-matrix) on
$X$ respectively, and let $B_r(x) = \{y\in X : |y-x|<r\}$ for every
$r\in \R^+$, $x\in X$. 

Given two flows $\varphi, \psi$ on $X$ and a homeomorphism $h:X\to X$, say
that $\varphi$ is $h$-{\bf related} to $\psi$, in symbols $\varphi
\stackrel{h}{\thicksim} \psi$, if (\ref{eq1_1}) holds,
or equivalently if $h$, $h^{-1}$ both map orbits into orbits. An
orbit-wise characterization of $\varphi
\stackrel{h}{\thicksim} \psi$ is readily established; see, e.g.,
\cite[Sec.\ 2]{BW2}.

\begin{prop}\label{prop12}
Let $\varphi$, $\psi$ be flows on $X$. For every homeomorphism $h:X\to
X$ the following statements are equivalent:
\begin{enumerate}
\item $\varphi \stackrel{h}{\thicksim} \psi$;
\item for every $x\in X$ there exists a continuous bijection
  $\tau_x:\R\to \R$ with $\tau_x (0)=0$ so that
  $$h \bigl( \varphi_t (x)\bigr) = \psi_{\tau_x(t)} \bigl( h(x) \bigr)
  \qquad \forall t \in \R \, .
  $$ 
\end{enumerate}
\end{prop}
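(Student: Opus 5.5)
The implication (ii)$\Rightarrow$(i) is immediate. Given $\tau_x$ as in (ii) and any $t\in\R$,
$$h\bigl(\varphi_t(x)\bigr)=\psi_{\tau_x(t)}\bigl(h(x)\bigr)\in \psi_{\R}\bigl(h(x)\bigr).$$
Surjectivity of $\tau_x$ onto $\R$ then gives $h\bigl(\varphi_{\R}(x)\bigr)=\psi_{\R}\bigl(h(x)\bigr)$, which is (\ref{eq1_1}). So the substance of the proof is (i)$\Rightarrow$(ii), and the plan is to split according to the orbit type of $x$ under $\varphi$: a fixed point, a periodic orbit, or an injectively parametrized orbit.

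First, since $h$ is a bijection mapping orbits onto orbits, it preserves orbit type, because the type is determined by the topology of the orbit as a subset of $X$. A point orbit goes to a point orbit. A periodic orbit is a compact circle and goes to a compact circle, hence to a periodic orbit. An orbit that is neither is the continuous injective image of $\R$; its image under $h$ is then not compact and contains more than one point, so it must be of the same type.

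\emph{Fixed points.} If $x$ is fixed, then $h(x)$ is fixed, and we take $\tau_x=\mathrm{id}_\R$.

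\emph{Non-periodic, non-fixed $x$.} The maps $t\mapsto\varphi_t(x)$ and $s\mapsto\psi_s(h(x))$ are continuous injections of $\R$ onto the respective orbits. Define
$$\tau_x(t)=\text{the unique } s \text{ with } \psi_s\bigl(h(x)\bigr)=h\bigl(\varphi_t(x)\bigr).$$
This is a bijection $\R\to\R$ with $\tau_x(0)=0$, since $h$ maps $\varphi_\R(x)$ bijectively onto $\psi_\R(h(x))$.

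The main obstacle is continuity of $\tau_x$, because the orbit parametrization $s\mapsto\psi_s(h(x))$ need not be a homeomorphism onto its image in the subspace topology. The plan is to show continuity at each $t_0$ by a local argument. Pick $t_0$ and set $y=h(\varphi_{t_0}(x))$, $s_0=\tau_x(t_0)$. Since $y$ is not fixed, there is a local flow box, or simply a short injective arc $J=\psi_{[s_0-\varepsilon,s_0+\varepsilon]}(h(x))$, which is a compact arc and is homeomorphic to its parameter interval. Then $h^{-1}(J)$ is a compact connected subset of the orbit $\varphi_\R(x)$ containing $\varphi_{t_0}(x)$.

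The key claim is that $h^{-1}(J)$ is a neighbourhood of $\varphi_{t_0}(x)$ \emph{within the parametrized orbit}, i.e.\ it contains $\varphi_t(x)$ for all $t$ near $t_0$. To obtain this, use that $\tau_x$ is a bijection and that preimages of compact parameter intervals are compact arcs, hence intervals in $t$. More precisely, $h^{-1}(J)$ is the image of the set $\tau_x^{-1}([s_0-\varepsilon,s_0+\varepsilon])$. A Baire-category or monotonicity argument then shows that $\tau_x$ maps intervals to intervals. Indeed, $\tau_x$ is a bijection carrying connected arcs $\varphi_{[a,b]}(x)$, which are compact images, to compact connected subsets of the orbit. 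A compact connected subset of an injectively immersed line is a parameter interval, since its preimage under the parametrization is $\sigma$-compact and connected. Hence $\tau_x$ is a bijection $\R\to\R$ sending intervals to intervals, so it is monotone, and therefore continuous.

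\emph{Periodic $x$.} Let $\varphi$-period $T$ and $\psi$-period $S$ for $h(x)$. Then $h$ restricts to a homeomorphism between two circles, $\R/T\Z\to\R/S\Z$, mapping $0$ to $0$. We lift this to a continuous map $\tau_x:\R\to\R$ with $\tau_x(0)=0$. Because it is a lift of a circle homeomorphism, $\tau_x$ has degree $\pm1$: it satisfies $\tau_x(t+T)=\tau_x(t)\pm S$ and is strictly monotone, hence a bijection of $\R$.

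In all three cases $\tau_x$ is a continuous bijection with $\tau_x(0)=0$ satisfying the identity in (ii), which proves (i)$\Rightarrow$(ii).
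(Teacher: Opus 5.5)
Your proof of (ii)$\Rightarrow$(i) is fine, and so are the fixed-point case, the periodic case (lifting the induced homeomorphism $\R/T\Z\to\R/S\Z$), and the final step that a bijection of $\R$ mapping intervals onto intervals is monotone and hence continuous. The paper gives no proof of this proposition, only a pointer to the authors' earlier work, so I judge your argument on its own terms.

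The gap is in the non-periodic, non-fixed case, at the step everything hinges on: that $\tau_x([a,b])$ is an interval. Write $\pi(s)=\psi_s\bigl(h(x)\bigr)$. Then $\tau_x([a,b])=\pi^{-1}\bigl(h(\varphi_{[a,b]}(x))\bigr)$. You justify its connectedness by saying that the preimage of a compact connected subset of an injectively immersed line is $\sigma$-compact and connected. But connectedness of this preimage is exactly what has to be proved, since preimages of connected sets under continuous bijections need not be connected.

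For injective continuous images of $\R$ in general the claim is false. Let $f:\R\to\R^2$ traverse a figure eight with $f(t)\to f(0)$ as $t\to\pm\infty$. Then $f\bigl((-\infty,-M]\bigr)\cup\{f(0)\}\cup f\bigl([M,\infty)\bigr)$ is an arc whose preimage is $(-\infty,-M]\cup\{0\}\cup[M,\infty)$. The ``Baire-category or monotonicity argument'' you mention is never carried out. Your ``key claim'' that $h^{-1}(J)$ contains $\varphi_t(x)$ for all $t$ near $t_0$ is just continuity of $\tau_x$ at $t_0$ restated. A smaller point: your assertion that a non-periodic, non-fixed orbit has non-compact image is also unproved, but you can avoid it by applying the periodic case to $h^{-1}$.

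So some property specific to flow orbits has to be used. There are two ways to do this.

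\emph{Flow box.} Take a local cross-section (Whitney--Bebutov flow box) $B\cong(-\eta,\eta)\times S$ at the non-fixed point $p=h(\varphi_{t_0}(x))$. By injectivity of the box map, the times at which $\psi_{\R}(h(x))$ meets $S$ are $\eta$-separated. Hence the orbit meets $B$ in $(-\eta,\eta)\times Z$ with $Z$ countable, and therefore totally disconnected. The connected set $h\bigl(\varphi_{(t_0-\delta,t_0+\delta)}(x)\bigr)\subset B$ must then lie in the plaque through $p$. This gives $|\tau_x(t)-\tau_x(t_0)|<2\eta$ for $|t-t_0|<\delta$, with $\eta$ arbitrarily small.

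\emph{Baire argument.} This route is more elementary. The sets $E_n=\{t:\tau_x(t)\in[n,n+1]\}$ are closed, because $\pi([n,n+1])$ is compact, and they cover $\R$. On any interval that $\tau_x$ maps into $[n,n+1]$ one has $\tau_x=(\pi|_{[n,n+1]})^{-1}\circ h\circ\varphi_{(\cdot)}(x)$, which is continuous. By Baire's theorem, $\tau_x$ is therefore continuous on a dense open set $G$.

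At an endpoint $c$ of a component of $G$, the monotone one-sided limit of $\tau_x$ must be finite. Otherwise $\psi_s(h(x))\to h(\varphi_c(x))$ as $s\to\infty$ or $s\to-\infty$, and a point to which an orbit converges in this way is fixed. That would make a point of this non-fixed orbit fixed. Hence the one-sided limit equals $\tau_x(c)$.

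So $\tau_x$ is continuous and monotone on the closure of each component of $G$. A second application of Baire's theorem, to the closed set $\R\setminus G$, then shows $G=\R$.
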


Proposition \ref{prop12} motivates the simplest, most
fundamental form of equivalence between flows: Say
that $\varphi$, $\psi$ are {\bf equivalent}, in symbols $\varphi
\thicksim \psi$, if $\varphi \stackrel{h}{\thicksim} \psi$ for some
homeomorphism $h$. Informally put, $\varphi \thicksim \psi$ means that every
$\varphi$-orbit is, up to a change of spatial coordinates (via $h$)
and a (possibly orbit-dependent) re-parametrization of time (via
$\tau_x$), also a $\psi$-orbit and vice versa.

A natural way of refining $ \varphi  \stackrel{h}{\thicksim}
\psi  $ is to require additional regularity of $h$. Note that if $ \varphi
\stackrel{h}{\thicksim} \psi  $ then also $ \varphi
\stackrel{\widetilde{h}}{\thicksim} \widetilde{\psi} $, where
$\widetilde{h} = h - h(0)$ and $\widetilde{\psi}_t = \psi_t \bigl( \cdot
+ h(0)\bigr) - h(0)$ for all $t\in \R$. Thus no generality is lost by
assuming that $h(0)=0$. Bearing this in mind, denote by $\cH= \cH (X)$ the set of all
homeomorphisms $h:X\to X$ with $h(0)=0$, and let $\cH_{\beta}=
\cH_{\beta} (X)$ with $0\le \beta \le 1$ be the set of all $h\in \cH$ for which $h$,
$h^{-1}$ both satisfy a $\beta$-H\"{o}lder condition (a.k.a.\
Lipschitz condition in case $\beta=1$) near $0$, i.e.,
\begin{equation}\label{eq2_0}
\cH_{\beta} = \left\{ h \in \cH : \exists r \in \R^+ \: \mbox{\rm
    s.t.} \: \sup\nolimits_{x,y\in B_r(0), x\ne y}  \frac{ |h(x) -
    h(y)| +  |h^{-1}(x) - h^{-1}(y)| }{|x-y|^{\beta}} < \infty \right\}  \, .
\end{equation}
Thus $\cH_0 = \cH$. Since $\beta \mapsto \cH_{\beta}$ is decreasing,
also consider $\cH_{1^-}:= \bigcap_{ 0\le \beta<1} \cH_{\beta}$. Furthermore, let
$$
\cH_{\sf diff} = \bigl\{ h\in \cH : h, h^{-1} \enspace \mbox{\rm are
  differentiable at } 0\, \bigr\} \, , \qquad
\cH_{\sf lin} = \bigl\{h\in \cH : h \enspace \mbox{\rm is
 linear}\,   \bigr\} \, .
$$
This yields five sets $\cH_{\bigstar}$, where $\bigstar \in \{0,1^-,1,
{\sf diff}, {\sf lin}\}$, and plainly
$\cH_0 \supset \cH_{1^-}\supset \cH_1 \supset \cH_{\sf lin} $ as well
as $\cH_0 \supset \cH_{\sf diff} \supset \cH_{\sf  lin}$, 
with each inclusion being strict, 
whereas $\cH_{1^-}\not \supset \cH_{\sf diff}$ and $\cH_{\sf diff}\not
\supset \cH_{1}$. Understand $\varphi
\stackrel{\bigstar }{\thicksim} \psi$ to mean that $\varphi
\stackrel{h}{\thicksim} \psi$ for some $h\in \cH_{\bigstar}$. Clearly,
$\stackrel{\bigstar}{\thicksim}$ yields an equivalence relation
between flows on $X$. Say that
$\varphi$, $\psi$ are {\bf topologically}, {\bf H\"{older}}, {\bf Lipschitz}, {\bf
  differentiably}, and {\bf linearly equivalent} if $\varphi
\stackrel{0}{\thicksim} \psi$, $\varphi
\stackrel{1^-}{\thicksim} \psi$, $\varphi
\stackrel{1}{\thicksim} \psi$, $\varphi
\stackrel{{\sf diff}}{\thicksim} \psi$, and
$\varphi \stackrel{{\sf lin}}{\thicksim} \psi$ respectively. With this, 
\begin{equation}\label{eq13}
\varphi \stackrel{{\sf lin}}{\thicksim} \psi \enspace \Longrightarrow
\enspace
\varphi
\stackrel{1}{\thicksim} \psi
\enspace \Longrightarrow
\enspace \varphi
\stackrel{1^-}{\thicksim} \psi
 \enspace \Longrightarrow
\enspace \varphi
\stackrel{0}{\thicksim} \psi \, ,
\qquad 
\varphi \stackrel{{\sf lin}}{\thicksim} \psi \enspace \Longrightarrow
\enspace
\varphi
\stackrel{{\sf diff}}{\thicksim} \psi \enspace \Longrightarrow
\enspace \varphi
\stackrel{0}{\thicksim} \psi \, ,
\end{equation}
and simple examples show that none of the implications in
(\ref{eq13}) can be reversed in general, not even for
$d=1$; also, $\varphi \stackrel{{\sf diff}}{\thicksim} \psi \not
\Rightarrow \varphi \stackrel{1^-}{\thicksim} \psi$ and
$\varphi \stackrel{1}{\thicksim} \psi\not \Rightarrow \varphi
\stackrel{{\sf diff}}{\thicksim} \psi $ in general. 

As alluded to in the Introduction, more restrictive forms of
equivalence have often been considered in the literature. To put these
in context, observe that $\tau_x$ in Proposition \ref{prop12} is uniquely
determined unless $\varphi_{\R}(x) = \{x\}$, i.e., unless $x$ is a
fixed point of $\varphi$, in symbols $x\in \mbox{\rm Fix}\, \varphi$,
in which case the continuous bijection $\tau_x$ is
arbitrary. Correspondingly, imposing additional requirements on the
family $(\tau_x)_{x\in X}$ yields more restrictive forms of
equivalence. For instance, understand
$\varphi\stackrel{\bigstar}{\thickapprox}\psi$ to mean that $\varphi
\stackrel{h}{\thicksim}\psi$ for some $h\in \cH_{\bigstar}$ so that
either $\tau_x$ is increasing for every $x\in X\setminus 
\mbox{\rm Fix} \, \varphi$ or else $\tau_x$ is
decreasing for every $x$. With this, clearly
\begin{equation}\label{eq2_2}
\varphi\stackrel{\bigstar}{\thickapprox}\psi \enspace \Longrightarrow
\enspace \varphi
\stackrel{\bigstar}{\thicksim}\psi\qquad \forall \bigstar \in
\{0,1^-,1, {\sf diff}, {\sf lin}\} \, ,
\end{equation}
and simple examples again show that no implication in (\ref{eq2_2})
can be reversed in general. It is a readily established, however, that for {\em linear\/} flows
(\ref{eq2_2}) can in fact be reversed; see \cite[Sec.\ 2]{BW2}.

\begin{prop}\label{lemH1}
Let $\Phi$, $\Psi$ be linear flows on $X$. Then
$$
\Phi
\stackrel{\bigstar}{\thickapprox} \Psi \enspace \Longleftrightarrow
\enspace \Phi
\stackrel{\bigstar}{\thicksim} \Psi \qquad \forall \bigstar\in \{0 , 1^-, 1,
\mbox{\sf diff}, \mbox{\sf lin}\} \, .
$$
\end{prop}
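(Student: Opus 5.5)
The implication $\Rightarrow$ is just (\ref{eq2_2}), so the work is in $\Leftarrow$. Suppose $\Phi \stackrel{h}{\thicksim}\Psi$ with $h\in \cH_\bigstar$, and let $\tau_x$ be as in Proposition \ref{prop12}. Each $\tau_x$ is a continuous bijection of $\R$, hence strictly monotone. So there is a sign $\sigma(x)\in\{\pm1\}$ for every $x\in X\setminus \mbox{\rm Fix}\,\Phi$, and $\sigma$ is constant along $\Phi$-orbits. Note that $\mbox{\rm Fix}\,\Phi=\ker A^\Phi$ is a subspace, and $h(\mbox{\rm Fix}\,\Phi)=\mbox{\rm Fix}\,\Psi$, because $h$ maps singleton orbits to singleton orbits. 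The plan has three steps: (a) show $\sigma$ is locally constant on $X\setminus\mbox{\rm Fix}\,\Phi$; (b) conclude whenever this set is connected; (c) treat the remaining case, $\mbox{\rm rank}\,A^\Phi=1$, directly.

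For (a), fix $x\notin \mbox{\rm Fix}\,\Phi$. Then $h(x)\notin\mbox{\rm Fix}\,\Psi$, so $A^\Psi h(x)\neq 0$ and a linear flow box for $\Psi$ exists near $h(x)$. Concretely, take a small transversal hyperplane section $\Sigma$ through $h(x)$ and $\delta>0$ such that $(s,z)\mapsto \Psi_s(z)$ is a homeomorphism from $(-\delta,\delta)\times\Sigma$ onto a neighbourhood $U$ of $h(x)$. Choose $t_0>0$ small enough that $h(\Phi_t y)\in U$ for all $t\in[0,t_0]$ and all $y$ near $x$. Then $t\mapsto h(\Phi_t y)$ is a continuous curve inside one local $\Psi$-orbit segment. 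Its flow-box time coordinate at $t=t_0$ has sign $\sigma(y)$, and it is nonzero because $\tau_y$ is injective. This sign depends continuously on $y$, so $\sigma(y)=\sigma(x)$ near $x$. Periodic orbits cause no trouble, since only short time intervals are used.

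For (b), if $\mbox{\rm codim}\,\mbox{\rm Fix}\,\Phi\ge 2$ then $X\setminus\mbox{\rm Fix}\,\Phi$ is connected, so $\sigma$ is constant and $h$ itself witnesses $\Phi\stackrel{\bigstar}{\thickapprox}\Psi$. If $\mbox{\rm Fix}\,\Phi=X$ there is nothing to prove.

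Step (c) is the case $\mbox{\rm codim}\,\mbox{\rm Fix}\,\Phi=1$, where $A^\Phi=vw^\top$ with $w\neq0$ and $v\neq 0$, and $X\setminus\ker w$ has two components $C_\pm$. I expect this to be the main obstacle, since here $h$ may genuinely need to be modified. Since $h(\mbox{\rm Fix}\,\Phi)=\mbox{\rm Fix}\,\Psi$ is a hyperplane, also $A^\Psi=v'w'^\top$.
\begin{enumerate}
\item If $w^\top v=\lambda\ne0$, every non-fixed $\Phi$-orbit is an open ray $y+\R^+ v$ or $y-\R^+v$ with $y\in\ker w$. Then $|\Phi_t x|\to\infty$ as $\lambda t\to\infty$, while $\Phi_t x\to y$ as $\lambda t\to-\infty$. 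Two consequences follow. First, $\Psi$ must have the same type, i.e.\ $\mu:=w'^\top v'\ne0$: if $\Psi$ were nilpotent, all its non-fixed orbits would be closed lines, whereas $h$ maps each $\Phi$-ray, which accumulates at a fixed point, onto a $\Psi$-orbit with the same property. Second, since $h$ is a homeomorphism fixing $0$, it sends the end of a ray at $\infty$ to the end at $\infty$. Hence $\sigma\equiv\mbox{\rm sign}(\lambda\mu)$ is automatically constant.
\item If $w^\top v=0$, the orbits are full lines $x+\R v$ inside the hyperplanes $\{w^\top x=c\}$, traversed with velocity $c v$, so the direction flips between $C_+$ and $C_-$. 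If $\sigma$ differs on $C_\pm$, pick $u$ with $u^\top v=1$ and $u^\top$ vanishing on $\ker w$, and let $R=I-2vu^\top$. This $R$ is a linear involution that fixes $\ker w$ pointwise, preserves $C_\pm$, maps each $\Phi$-orbit in $C_\pm$ onto itself, and reverses its direction. Define $h'=h\circ R$ on the bad component and $h'=h$ elsewhere. This $h'$ is continuous across $\ker w$, since $R$ is the identity there, and it is bijective. It stays in $\cH_\bigstar$: the piecewise linear $R$ is bi-Lipschitz, and for $\bigstar={\sf diff}$ one checks at $0$ that $R$ and $I$ agree on the boundary hyperplane. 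The one delicate point is that a piecewise-linear correction could destroy differentiability at $0$; if that fails, I would instead adjust by a smooth orbit-preserving time reversal supported away from $0$, or match derivatives directly. By construction $h'$ has uniform orientation, which completes the proof.
\end{enumerate}
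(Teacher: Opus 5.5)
Your steps (a), (b) and (c)(1) are sound. One justification is missing in (a): you should say why $t\mapsto h(\Phi_t y)$, $0\le t\le t_0$, stays in a single plaque of the flow box. One reason is that a $\Psi$-orbit meets $\Sigma$ in a countable set; then $y\mapsto\tau_y(t_0)$ is continuous and non-zero near $x$.

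The gap is in step (c)(2): the proposed correction cannot exist. Since $w^\top v=0$, you have $v\in\ker w$, so no $u$ can satisfy $u^\top v=1$ while vanishing on $\ker w$. The requirements on $R$ are in fact incompatible. A linear map sending every line $x+\R v$ onto itself has the form $I+vu^\top$, and reversing direction forces $1+u^\top v<0$, so $u^\top v\neq 0$. Because $v\in\ker w$, $R$ then moves points of $\ker w$. The glued map $h'$ therefore has one-sided limits $h(Rx)\neq h(x)$ at every $x\in\ker w$ with $u^\top x\neq0$. More generally, no homeomorphism at all can map each $\Phi$-orbit onto itself, fix $\ker w$ pointwise, and reverse orbits on one side (see below). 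So your fallback of a time reversal supported away from $0$ fails as well.

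The missing idea is that in case (c)(2) no modification is needed, because $\sigma$ is automatically constant. Here $\Phi_t x=x+t(w^\top x)v$. Running your (c)(1) argument with $h^{-1}$ shows that also $\Psi_t z=z+t(w'^\top z)v'$ with $w'^\top v'=0$.

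Fix $y\in\ker w$, $n$ with $w^\top n=1$, and $u'$ with $u'^\top v'=1$. Put $g_\epsilon(s)=u'^\top h(y+\epsilon n+sv)$.

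For $\epsilon\neq0$, the $\Phi$-orbit $y+\epsilon n+\R v$ is mapped onto a line $p_\epsilon+\R v'$. Hence $g_\epsilon$ is strictly monotone, and $(I-v'u'^\top)h(y+\epsilon n+sv)$ does not depend on $s$.

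Letting $\epsilon\to0$ shows that $h$ maps the line of fixed points $y+\R v$ injectively into a line parallel to $v'$. So $g_0$ is strictly monotone too. Pointwise convergence $g_\epsilon\to g_0$ then forces every $g_\epsilon$ with $|\epsilon|$ small, of either sign, to have the same monotonicity type $\theta\in\{-1,1\}$ as $g_0$.

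Since $w'^\top$ is constant along $\Psi$-orbits, you get
$$
\tau_x(t)=\frac{g_\epsilon(s+\epsilon t)-g_\epsilon(s)}{w'^\top h(x)} \qquad \mbox{\rm for } x=y+\epsilon n+sv \, .
$$
Moreover $\mbox{\rm sign}\, w'^\top h(x)=\kappa\,\mbox{\rm sign}\,\epsilon$ for a fixed $\kappa\in\{-1,1\}$, because $h$ maps $C_+$ and $C_-$ onto different components of $X\setminus\ker w'$.

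Hence $\sigma=\theta\kappa$ on both sides of $\ker w$ near $y$. Together with (a), $\sigma$ is constant on $X\setminus\mbox{\rm Fix}\,\Phi$. With this repair, your argument shows that the given $h$ itself always has uniform orientation, so membership in $\cH_\bigstar$ never becomes an issue for any $\bigstar$.
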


Still more restrictively than
$\varphi\stackrel{\bigstar}{\thickapprox}\psi $, understand
$\varphi\stackrel{\bigstar}{\cong}\psi $ to mean that (\ref{eq1_2})
holds for some $h\in \cH_{\bigstar}$, and say that $\varphi$, $\psi$
are  {\bf topologically}, {\bf H\"{older}}, {\bf Lipschitz}, {\bf
  differentiably}, and {\bf linearly conjugate} if
$\varphi\stackrel{0}{\cong}\psi$, $\varphi\stackrel{1^-}{\cong}\psi$,
$\varphi\stackrel{1}{\cong}\psi$, $\varphi\stackrel{{\sf
    diff}}{\cong}\psi$, and $\varphi\stackrel{{\sf
    lin}}{\cong}\psi$ respectively. Obviously
$\varphi\stackrel{\bigstar}{\cong}\psi $ implies
$\varphi\stackrel{\bigstar}{\thickapprox}\psi $, and (\ref{eq13})
remains valid with all symbols $\thicksim$ replaced by either
$\thickapprox$ or $\cong$, where again 
no implication can be reversed in general. Notice however that for
linear flows, by Theorem \ref{thm1x} 
together with \cite{BW, BW2}, validity of $\Phi\stackrel{\bigstar}{\thicksim}\Psi $
does, rather amazingly, imply that
$\Phi\stackrel{\bigstar}{\cong}\Psi_{*\alpha} $ for some non-zero
$\alpha$; also, $\Phi\stackrel{{\sf diff}}{\thicksim}\Psi$ implies
$\Phi\stackrel{{\sf lin}}{\thicksim}\Psi$, by Proposition
\ref{prop1zb}, and similarly with $\thicksim$ replaced by $\cong$.

Finally, given a (not necessarily linear) flow $\varphi$ on $X$ and any point $x\in X$,
denote the minimal $\varphi$-period of $x$ by $T_x^{\varphi} := \inf
\{t\in \R^+: \varphi_t(x) = x\}$, with the usual convention that $\inf
\varnothing = \infty$. Thus $x\in \mbox{\rm Fix}\, \varphi$ if and only if $T_x^{\varphi} =
0$. If $T_x^{\varphi} \in \R^+$ then $x$ is $T$-{\bf periodic} with $T\in
\R^+$, i.e., $\varphi_T(x) = x$, precisely if $T/T_x^{\varphi}\in
\N$. For convenience, let $\mbox{\rm Per}_T\varphi = \{x\in
X: \varphi_T(x) = x\}$ for every $T\in \R^+$, and let $\mbox{\rm
  Per}\, \varphi = \bigcup_{T\in \R^+} \mbox{\rm Per}_T \varphi$.
For linear flows, one basic, greatly simplifying feature is as
follows: If a point $x$ is (backward and forward) asymptotic to {\em some\/}
point, then $x$ itself is fixed or periodic. For
non-linear flows this implication may fail (though the
converse always holds).

\begin{prop}\label{propxy}
Let $\Phi$ be a linear flow on $X$. Then for every $x\in X$,
$T\in \R^+$, and convergent sequence $(a_n)$ in $\R$:
\begin{enumerate}
\item $\lim_{|t|\to \infty} \Phi_t x $ exists $\enspace
  \Longleftrightarrow \enspace  x\in   \mbox{\rm Fix} \, \Phi$;
\item $\lim_{k\in \Z, |k|\to \infty} \Phi_{kT + a_{|k|}} x$ exists  $\enspace  \Longleftrightarrow \enspace $
  $x\in \mbox{\rm Per}_T \Phi$.
\end{enumerate}
Furthermore, for every $x\in X_{\sf C}^{\Phi}$:
\begin{enumerate}
\item[{\rm (iii)}] $\lim_{t\to \infty} \Phi_t x$ exists $\enspace  \Longleftrightarrow \enspace $
  $\lim_{t\to -\infty} \Phi_t x$ exists $\enspace
  \Longleftrightarrow \enspace  x\in   \mbox{\rm Fix} \, \Phi$;
\item[{\rm (iv)}] $\lim_{n\to \infty} \Phi_{nT + a_n} x$ exists $\enspace  \Longleftrightarrow \enspace $
  $\lim_{n\to \infty} \Phi_{-nT + a_n}x$ exists  $\enspace  \Longleftrightarrow \enspace $
  $x\in \mbox{\rm Per}_T \Phi$.
\end{enumerate}
\end{prop}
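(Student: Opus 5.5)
The plan is to reduce everything to the real Jordan structure of $A^{\Phi}$. After a linear change of coordinates (which preserves all statements), write $X = X_{\sf S}^{\Phi}\oplus X_{\sf C}^{\Phi}\oplus X_{\sf U}^{\Phi}$, invariant under every $\Phi_t$. Decompose $x = x_{\sf S}+x_{\sf C}+x_{\sf U}$ accordingly. On $X_{\sf S}$ and $X_{\sf U}$ the flow is exponentially contracting (resp.\ expanding) as $t\to\infty$ (resp.\ $t\to -\infty$). On $X_{\sf C}$ every solution grows at most polynomially in both time directions. All of (i)--(iv) are statements about these components, and the limits are computed componentwise.

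For (i), the direction ``$\Leftarrow$'' is immediate, since a fixed point has a constant orbit. For ``$\Rightarrow$'', suppose $\Phi_t x\to y_\pm$ as $t\to\pm\infty$. First I would show that a limit along $t\to\infty$ forces $x_{\sf U}=0$: the unstable component satisfies $|\Phi_t x_{\sf U}|\ge c e^{\mu t}|x_{\sf U}|$ with $\mu>0$, so it is unbounded unless $x_{\sf U}=0$. Symmetrically, the limit as $t\to-\infty$ forces $x_{\sf S}=0$. Hence $x\in X_{\sf C}$, and it suffices to prove (iii).

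For (iii), take $x\in X_{\sf C}$ with $\lim_{t\to\infty}\Phi_t x = y$ existing. Continuity of $\Phi_s$ gives $\Phi_s y = \lim_t \Phi_{t+s}x = y$, so $y$ is fixed. Then $z := x-y$ satisfies $\Phi_t z\to 0$. On the central space, however, no non-zero point tends to $0$. In each real Jordan block for eigenvalue $\pm i\omega$, the top nonzero coordinate component of $\Phi_t z$ is a rotation of a nonzero vector, so its modulus is constant and positive. Therefore $z=0$ and $x=y\in\mathrm{Fix}\,\Phi$. The $t\to-\infty$ case is identical.

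For (ii), consider the time-$T$ map $B=\Phi_T$ together with the small corrections $\Phi_{a_{|k|}}$, where $a_{|k|}\to a$. Exactly as before, a limit along $k\to\infty$ kills $x_{\sf U}$ and a limit along $k\to-\infty$ kills $x_{\sf S}$; the bounded perturbation $\Phi_{a_{|k|}}$ does not affect exponential growth. This reduces (ii) to (iv), with both sequences indexed by $n=|k|\to\infty$. For (iv), let $x\in X_{\sf C}$ with $\Phi_{nT+a_n}x\to y$. Then $\Phi_{nT}x\to \Phi_{-a}y =: y'$, since $\Phi_{-a_n}\to\Phi_{-a}$ uniformly on bounded sets. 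Applying $\Phi_T$ gives $\Phi_T y'=y'$, so $z:=x-y'$ satisfies $\Phi_{nT}z\to 0$.

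Again I claim $z=0$, and this is the main obstacle, because discrete sampling could in principle hit zeros of oscillating terms. I would argue via the Jordan form of $e^{TA}$ on $X_{\sf C}$. Its eigenvalues $e^{\pm i\omega T}$ all have modulus $1$, and a nonzero vector $z$ has a leading component in some generalized eigenspace on which $e^{nTA}z$ behaves like $n^m\lambda^n v+O(n^{m-1})$ with $v\ne 0$ and $|\lambda|=1$. This quantity does not tend to $0$, so $z=0$ and $x=y'\in\mathrm{Per}_T\Phi$. The converse directions are trivial: for $x\in\mathrm{Per}_T\Phi$ one has $\Phi_{nT+a_n}x=\Phi_{a_n}x\to\Phi_a x$. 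Finally, in (iii) and (iv) the equivalence of the forward and backward limits follows since both are equivalent to $x$ being fixed (resp.\ $T$-periodic).
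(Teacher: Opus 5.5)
Your proof is correct. The paper states this proposition without proof, as a basic feature of linear flows, so there is no argument to compare against. Your proof is complete: you use the splitting $X = X_{\sf S}^{\Phi}\oplus X_{\sf C}^{\Phi}\oplus X_{\sf U}^{\Phi}$, where a forward limit kills the unstable component, a backward limit kills the stable one, and the bounded shifts $a_{|k|}$ do not affect exponential growth. You then finish with a Jordan-block analysis on $X_{\sf C}^{\Phi}$. In (i) you actually prove slightly more than required: existence of the two one-sided limits, possibly different, already forces $x\in\mbox{\rm Fix}\,\Phi$.

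A few small points are worth making explicit.

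First, the limits $y$ and $y'$ lie in $X_{\sf C}^{\Phi}$ because this subspace is closed and invariant. Hence $z=x-y$ (resp.\ $z=x-y'$) is again central, which your no-decay argument requires.

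Second, you only wrote out the forward half of (iv). The backward half follows by applying the forward argument to the time-reversal $\Phi^*$ with the convergent sequence $(-a_n)$, since $\Phi_{-nT+a_n}=\Phi^*_{nT-a_n}$, $X_{\sf C}^{\Phi^*}=X_{\sf C}^{\Phi}$, and $\mbox{\rm Per}_T\Phi^*=\mbox{\rm Per}_T\Phi$.

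Third, the ``main obstacle'' you flag in (iv) is not really one. The bound you already use in (iii) works for every $t\in\R$. In a real Jordan block $J_m(i\omega)$, the highest-index non-zero coordinate (pair) of $z$ is left unchanged by $e^{tK_m(\omega)}$ and only rotated by $R_m(\omega t)$. So $|\Phi_t z|$ is bounded below by a positive constant for all $t$, in particular along $t=nT$. Passing to the complex Jordan form of $e^{TA^{\Phi}}$ is correct but unnecessary.
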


\section{Linear flows and Lipschitz similarity}\label{sec2a}

Let $\Phi$ be a linear flow on $X$. A set $Y\subset X$ is $\Phi$-{\bf
  invariant} if $\Phi_t Y = Y$ for every $t\in \R$, or equivalently if
$\Phi_{\R} y \subset Y$ for every $y\in Y$. Notice that a sub{\em
  space\/} $Y$ of $X$ is $\Phi$-invariant if and only if
$A^{\Phi}Y\subset Y$. A linear flow $\Phi$ is {\bf irreducible} if $X=
Y \oplus  \widetilde{Y}$ with $\Phi$-invariant subspaces $Y$,
$\widetilde{Y}$ implies that either
$Y=\{0\}$ or $\widetilde{Y} = \{0\}$, and it is {\bf diagonal}({\bf izable}) if
$\Phi_t$ is diagonalizable (over $\C$) for some and hence every $t\in
\R \setminus \{0\}$. Thus, $\Phi$ is irreducible or diagonal if and only
if, relative to the appropriate basis, the generator $A^{\Phi}$ is a single real
Jordan block or a diagonal matrix (over $\C$) respectively. In
particular, for an irreducible $\Phi$ the spectrum $\sigma (\Phi):=
\sigma (A^{\Phi})$, i.e., the set of all eigenvalues of $A^{\Phi}$, is
either a real singleton or a non-real complex conjugate 
pair, so $\sigma(\Phi) = \{z, \overline{z}\}$ for some $z\in
\C$.
There exists a unique decomposition $X =
\bigoplus_{\ell = 1}^{\ell_0} X_{\ell}^{\Phi}$ where $\ell_0 \in \N$ and $X_{\ell}^{\Phi}$
is, for every $\ell \in \{1, \ldots , \ell_0\}$, a $\Phi$-invariant
subspace so that $\Phi|_{\R \times X_{\ell}^{\Phi}}$ is irreducible. With
this, $\Phi$ is irreducible precisely if
$\ell_0 = 1$ and is diagonal precisely if $\Phi|_{\R \times
  X_{\ell}^{\Phi}}$ is diagonal for every $\ell$. Moreover, letting
$$
X_{{\sf D}}^{\Phi} = \bigoplus \bigl\{X_{\ell}^{\Phi} : \Phi|_{\R  \times X_{\ell}^{\Phi}} \enspace
\mbox{\rm is diagonal} \bigr\} \, , \quad
X_{{\sf AD}}^{\Phi} = \bigoplus \bigl\{X_{\ell}^{\Phi} :
\Phi|_{\R\times X_{\ell}^{\Phi}} \enspace
\mbox{\rm is not diagonal} \bigr\} \, ,
$$
clearly the decomposition $X= X_{\sf D}^{\Phi} \oplus X_{\sf
  AD}^{\Phi}$ is $\Phi$-invariant, and $\Phi$ is diagonal precisely
if $X = X_{\sf D}^{\Phi}$. Refer to $X_{\sf D}^{\Phi}$ and $X_{\sf
  AD}^{\Phi}$ as the {\bf diagonal} and {\bf anti-diagonal}
space of $\Phi$ respectively, and say that $\Phi$ is {\bf
  anti-diagonal} whenever $X= X_{\sf AD}^{\Phi}$. 

For another slate of $\Phi$-invariant subspaces,
recall that $X = X_{\sf
  S}^{\Phi} \oplus X_{\sf C}^{\Phi} \oplus X_{\sf
  U}^{\Phi} = X_{\sf H}^{\Phi} \oplus X_{\sf C}^{\Phi}$, where
\begin{align*}
& X_{\sf S}^{\Phi} = \Bigl\{ x\in X : \lim\nolimits_{t\to \infty}
  \Phi_t x = 0 \Bigr\} = \bigoplus \bigl\{X_{\ell}^{\Phi} : \sigma (
                 \Phi|_{\R\times X_{\ell}^{\Phi}} )  \subset \C^- \bigr\}  \, , \\[1mm]
& X_{\sf C}^{\Phi} = \Bigl\{ x\in X : \lim\nolimits_{|t|\to \infty}
  e^{-\varepsilon |t|}\Phi_t x = 0 \enspace \forall \varepsilon > 0
                                                                                     \Bigr\} = \bigoplus \bigl\{X_{\ell}^{\Phi} : \sigma (
                 \Phi|_{\R\times X_{\ell}^{\Phi}} )  \subset i\R 
                                                                                     \bigr\}  \, ,   \\[1mm]
& X_{\sf U}^{\Phi} = \Bigl\{ x\in X : \lim\nolimits_{t\to - \infty}
                                                                                      \Phi_t  x = 0 \Bigr\} = \bigoplus \bigl\{X_{\ell}^{\Phi} : \sigma (
                 \Phi|_{\R\times X_{\ell}^{\Phi}} )  \subset \C^+ \bigr\}  \, ,  \\[1mm]
& X_{\sf H}^{\Phi}= X_{\sf S}^{\Phi} \oplus X_{\sf U}^{\Phi} =  \bigoplus \bigl\{X_{\ell}^{\Phi} : \sigma (
                 \Phi|_{\R\times X_{\ell}^{\Phi}} )  \subset \C \setminus  (   i\R      )\bigr\}     \, , 
\end{align*}
are referred to as the {\bf stable}, {\bf central}, {\bf unstable}, and
{\bf hyperbolic} space of $\Phi$ respectively. Say that $\Phi$ is {\bf stable}, {\bf central}, {\bf
  unstable}, and {\bf hyperbolic} if $X$ equals $X_{\sf S}^{\Phi}$, $X_{\sf
  C}^{\Phi}$, $X_{\sf U}^{\Phi} $, and $X_{\sf H}^{\Phi}$
respectively; moreover, $\Phi$ is {\bf bounded} if $\sup_{t\in
  \R}|\Phi_t|<\infty$, or equivalently if $\Phi$ is diagonal and central. 
For convenience throughout, usage of the word {\em flow\/} in conjunction with
any of these adjectives, as well as {\em irreducible}, ({\em
  anti-}){\em diagonal}, or {\em generated by}, automatically 
implies that the flow under consideration is linear. 
For every $\bullet\in \{{\sf D}, {\sf AD}, {\sf S}, {\sf C}, {\sf U}, {\sf H}\}$,
let $d_{\bullet}^{\Phi} = \dim X_{\bullet}^{\Phi}$,
write $\Phi|_{\R \times X_{\bullet}^{\Phi}}$ simply as
$\Phi_{\bullet}$, and denote by $P_{\bullet}^{\Phi}$ the linear
projection of $X$ onto $X_{\bullet}^{\Phi}$, along $X_{\circ}^{\Phi}$
with $\circ \in \{{\sf D}, {\sf AD}\}\setminus \{\bullet\}$ if $\bullet \in \{{\sf D},
{\sf AD}\}$, along $\bigoplus_{\circ
  \in \{{\sf S}, {\sf C}, {\sf U} \} \setminus \{  \bullet \} }
X_{\circ}^{\Phi}$ if $\bullet \in
\{{\sf S}, {\sf C}, {\sf U}\}$, and along $X_{\sf C}^{\Phi}$ if
$\bullet = {\sf H}$. Clearly, $\Phi  \stackrel{{\sf
    lin}}{\cong} \bigtimes_{\bullet\in \{{\sf D}, {\sf AD}\}}
\Phi_{\bullet} \stackrel{{\sf
    lin}}{\cong} \bigtimes_{\bullet\in \{{\sf S}, {\sf C}, {\sf U}\}} \Phi_{\bullet}$ via the linear
isomorphisms $\bigtimes_{\bullet\in \{{\sf D}, {\sf AD}\}}
P_{\bullet}^{\Phi} $, $ \bigtimes_{\bullet\in \{{\sf S}, {\sf C}, {\sf U}\}}
P_{\bullet}^{\Phi}$ respectively; also $d = d_{\sf D}^{\Phi} + d_{\sf
AD}^{\Phi} = d_{\sf S}^{\Phi} + d_{\sf C}^{\Phi} + d_{\sf
U}^{\Phi} = d_{\sf H}^{\Phi} + d_{\sf C}^{\Phi} $.
For convenience, the {\bf time-reversal} $\Phi_{\ast (-1)}$ of $\Phi$
is denoted $\Phi^*$, in other words 
$\Phi^*$ is generated by $-A^{\Phi}$. Notice that
$\Phi^* \stackrel{\sf lin}{\thicksim} \Phi$ and also $X_{\bullet}^{\Phi^*} =
X_{\bullet}^{\Phi} $ for every $\bullet \in \{{\sf D}, {\sf AD}, {\sf C},
{\sf H}\}$ whereas $X_{\sf S}^{\Phi^*} =
X_{\sf U}^{\Phi} $ and $X_{\sf U}^{\Phi^*} =
X_{\sf S}^{\Phi}$.

Several arguments in this article require a modicum of explicit
calculations. To keep these calculations transparent and consistent, the
following notations are used throughout:
Let $J_1 =[0]\in \R^{1\times 1}$, and for $m\in \N\setminus
\{1\}$ denote by $J_m$ the standard nilpotent $m\times m$-Jordan
block, 
$$
J_m = \left[ 
\begin{array}{ccccc}
0 & 1 & 0 & \cdots & 0 \\
\vdots & \ddots & \ddots &   &  \vdots \\
\vdots &  &  & \ddots & 0 \\
\vdots  & & & \ddots  & 1\\
0 & \cdots & \cdots & \cdots & 0
\end{array}
\right]\in \R^{m\times m} ;
$$
moreover, for every $m\in \N$ let 
$$
J_m (a) = aI_m + J_m \, , \quad J_m(a+ib) = a I_{2m} +  \left[
\begin{array}{c|c}
J_m & - b I_m \\ \hline
b  I_m & J_m
\end{array}
\right]  \qquad \forall  a\in \R , b\in \R  \setminus \{0\} \, .
$$
For every $z\in \C$, therefore, $J_m(z)$ simply is a {\em real\/} Jordan block with $\sigma \bigl(
J_m(z)\bigr) = \{z, \overline{z}\}$. Note that 
$J_m(z)\in \R^{m\times m}$ if $z\in \R$, whereas $J_m(z)\in \R^{2
  m\times 2m}$ if $z\in \C\setminus \R$. To efficiently keep track of
the size of $J_m(z)$, define ${\sf d}(b)\in \{1,2\}$ as
$$
{\sf d}(b) = \left\{
  \begin{array}{cl}
    1 & \mbox{\rm if } b = 0 \, , \\
    2 & \mbox{\rm if } b\in \R \setminus \{0\} \, ;
  \end{array}
\right.
$$
with this $J_m(a+ib) = a I_{m \, {\sf d}(b)} + J_m(ib)\in \R^{m\, {\sf
    d}(b)\times m\, {\sf d}(b)}$ for every $a,b\in \R$. Moreover, let
$$
K_m(b) = \left\{
  \begin{array}{ll}
    J_m & \mbox{\rm if } b = 0 \, , \\
   \mbox{\rm diag}\, [J_m,J_m] & \mbox{\rm if } b\in \R \setminus \{0\} \, ,
  \end{array}
\right. 
$$
as well as
$$
R_m(b) =e^{J_m(ib) - K_m(b) } = \left\{
  \begin{array}{ll}
    I_m & \mbox{\rm if } b = 0 \, , \\[1mm]
    \left[
\begin{array}{c|r}
\cos b \, I_m & - \sin b \,  I_m \\ \hline
\sin b \,  I_m & \cos b \, I_m
\end{array}
\right]  & \mbox{\rm if } b\in \R \setminus \{0\} \, ,
  \end{array}
\right. 
$$
so $K_m(b), R_m(b)\in \R^{m \, {\sf d}(b) \times m \, {\sf d}(b)}$
also. Note that $|K_m(b)^j|=1$ for all $b\in \R$, $j\in \{0, \ldots 
, m-1\}$, whereas $K_m(b)^{m} = O_{m \, {\sf d}(b)}$; here $K_m(b)^0:=
I_{m\, {\sf d}(b)}$. Also, $R_m(b)^{-1} = R_m (-b)=
R_m(b)^{\top}$; in particular, $R_m(b)$
preserves $|\cdot|$ for every $b\in \R$. Additionally, $R_m(b+c) = R_m (b)
R_m(c)$ provided that ${\sf d}(b+c)={\sf d}(b) = {\sf d}(c)$. Notice
that $K_m(b) R_m(c)= R_m(c)
K_m(b)$ for all $b,c\in \R$ with ${\sf d}(b) = {\sf d}(c)$, and
similarly $K_m(b)^{\top} R_m(c)= R_m(c)
K_m(b)^{\top}$. Valid for all $m\in \N$, $a,b\in\R$, the universal
formula 
\begin{equation}\label{eq2z1}
e^{tJ_m(a+ib)} = e^{at} R_m(bt) e^{tK_m(b)}  = e^{at} R_m(bt)
\sum\nolimits_{j=0}^{m-1} \frac{t^j}{j!} K_m(b)^j \qquad \forall t \in
\R\setminus \{0\} \, ,
\end{equation}
is going to be most useful on several occasions throughout this
article. (If $b\ne 0$ then (\ref{eq2z1}) fails for $t=0$ due to a
dimension mismatch between $R_m(0)$ and $K_m(b)$. This, however, is of
no concern, as (\ref{eq2z1}) is going to be utilized principally for large $|t|$.)

For the analysis in subsequent sections, it is helpful to
recall one further classical concept: Given any linear flow
$\Phi$ on $X$, the (forward) {\bf Lyapunov exponent} of $\Phi$ at $x$,
$$
\lambda_+^{\Phi} (x) = \lim\nolimits_{t\to \infty} \frac{\log |\Phi_t
  x|}{t}\, , 
$$
exists for every $x\in X\setminus \{0\}$, by virtue of either an
explicit computation or the multiplicative ergodic theorem \cite{ARDS,
CK}; moreover, the range of $x\mapsto
\lambda_+^{\Phi}(x)$ equals $\{{\sf Re}\, z : z \in \sigma(\Phi)\}$. With
$\lambda_+^{\Phi}(0):= -\infty$ for convenience, the set $L^{\Phi}
(s):= \{x\in X : \lambda_+^{\Phi}
(x) \le s\}$ is a $\Phi$-invariant subspace for every $s\in \R$,
referred to as the {\bf Lyapunov space} of $\Phi$ at $s$. It is
readily seen that
$$
L^{\Phi} (s)= \bigoplus \bigl\{X_{\ell}^{\Phi} : \sigma (
                 \Phi|_{\R\times X_{\ell}^{\Phi}} ) =
                 \{z,\overline{z}\} \enspace \mbox{\rm with}\enspace
              {\sf Re}\, z \le s \bigr\}  
\qquad \forall s\in \R \, .
$$
Letting $\ell^{\Phi}(s)= \dim L^{\Phi}(s)$, clearly the
integer-valued function $\ell^{\Phi}$ is non-decreasing and
right-con\-tin\-uous, with $\lim_{s\to -\infty}\ell^{\Phi}(s) = 0$ and
$\lim_{s\to \infty}\ell^{\Phi}(s) = d$. Observe that $\lambda_+^{\Phi} (x) = a$ for
some $x\in X$, $a\in \R$ precisely if $\ell^{\Phi} (a^-) < \ell^{\Phi}
(a)$, and refer to the non-negative integer $\ell^{\Phi} (a) - \ell^{\Phi}
(a^-)$ as the {\bf multiplicity} of $a$. Let $\lambda_1^{\Phi}\le \lambda_2^{\Phi} \le \ldots \le
\lambda_d^{\Phi}$ be the (not necessarily different) Lyapunov
exponents of the linear flow $\Phi$, that is, $\bigl\{ \lambda_j^{\Phi} : j\in \{1,
\ldots , d\} \bigr\} = \bigl\{ \lambda_+^{\Phi} (x): x\in X \setminus
\{0\} \bigr\}$, with each exponent repeated according to its 
multiplicity. For convenience, let
$$
\Lambda^{\Phi} := \Lambda^{A^{\Phi}} := \mbox{\rm diag} \, 
[\lambda_1^{\Phi}, \ldots , \lambda_d^{\Phi}] \, .
$$
Note that if
$\Phi$ is irreducible with $\sigma (\Phi) = \{z,\overline{z}\}$ for
some $z\in \C$, then simply $\Lambda^{\Phi} = {\sf Re}\,  z \, I_d$. Also,
$\Phi$ is stable, unstable, central, and hyperbolic precisely if
$\lambda_j^{\Phi}<0$, $\lambda_j^{\Phi}>0$, $\lambda_j^{\Phi}=0$, and
$\lambda_j^{\Phi}\ne 0$ for every $j\in \{1, \ldots , d\}$
respectively. Moreover, $\ell^{\Phi^*}(-s) = d - \ell^{\Phi}(s^-)$ for
all $s\in \R$, and consequently $\lambda_j^{\Phi^*} = -
\lambda_{d+1-j}^{\Phi}$ for every $j\in \{1, \ldots , d\}$, that is,
$\Lambda^{\Phi^*} = -\mbox{\rm diag}\, [\lambda_d^{\Phi}, \ldots ,
\lambda_1^{\Phi}]$.

Recall that $A^{\Phi}$, $A^{\Psi}$ are {\bf
  similar} if $A^{\Phi} = P^{-1} A^{\Psi} P$ for some invertible $P\in
\R^{d\times d}$. Clearly, $A^{\Phi}$, $A^{\Psi}$ are similar if and
only if $A^{\Phi_{\bullet}}$, $A^{\Psi_{\bullet}}$ are similar for every $\bullet
\in \{{\sf D}, {\sf AD}, {\sf S}, {\sf C}, {\sf U}, {\sf H}\}$. 
Say that the generators of two linear flows $\Phi, \Psi$ are {\bf Lyapunov
  similar} if $\Lambda^{\Phi}$, $\Lambda^{\Psi}$ are similar, in which
case automatically $\Lambda^{\Phi} = \Lambda^{\Psi}$. Thus $A^{\Phi}$, $A^{\Psi}$ are Lyapunov similar
precisely if they have the same Lyapunov exponents, with matching
multiplicities, or equivalently if $\ell^{\Phi} =
\ell^{\Psi}$. Observe that if $A^{\Phi}$, $A^{\Psi}$
are similar then they are Lyapunov similar, as are $A^{\Phi_{\bullet}}$, $A^{\Psi_{\bullet}}$ for every $\bullet
\in \{{\sf D}, {\sf AD}, {\sf S}, {\sf C}, {\sf U}, {\sf H}\}$. 

In order to introduce one intermediate notion of
similarity, for every $m\in \N$, $a,b\in \R$ define
\begin{equation}\label{eq3_1a}
\cL J_m(a+ib) = \left\{
  \begin{array}{ll}
    a I_{{\sf d}(b)} & \mbox{\rm if } m = 1 \, ,\\
    J_m(a+ib) & \mbox{\rm if } m \ge 2 \, .
    \end{array}
\right.
\end{equation}
More generally, given any $A\in \R^{d\times d}$, choose an invertible $P\in \R^{d\times
  d}$ so that
\begin{equation}\label{eq3_1b}
A  = P^{-1} \mbox{\rm diag}\, \bigl[
J_{m_1} (z_1) , \ldots , J_{m_k} (z_k)
\bigr] P \, ,
\end{equation}
with appropriate $k\in \N$, $m_1, \ldots , m_k\in \N$, $z_1, \ldots ,
z_k\in \C$, and define
$$
\cL A = P^{-1} \mbox{\rm diag}\, \bigl[
\cL J_{m_1} (z_1) , \ldots , \cL J_{m_k} (z_k)
\bigr]  P \, .
$$
Say that $A^{\Phi}$, $A^{\Psi}$ are {\bf Lipschitz similar} if $\cL A^{\Phi}$, $\cL A^{\Psi}$ are similar. Notice that 
$\cL A$ is determined by $A$ up to similarity, and so the notion of Lipschitz
similarity is well-defined. To fully appreciate it, observe
that $A^{\Phi}$, $A^{\Psi}$ are Lipschitz similar if and only if
$A^{\Phi_{\bullet}}$, $A^{\Psi_{\bullet}}$ are Lipschitz similar for every $\bullet
\in \{{\sf D}, {\sf AD}, {\sf S}, {\sf C}, {\sf U}, {\sf
  H}\}$. Moreover, it is readily seen that $A^{\Phi}$, $A^{\Psi}$ are
Lipschitz similar precisely if $A^{\Phi}$, $A^{\Psi}$ are Lyapunov
similar while $A^{\Phi_{\sf AD}}$, $A^{\Psi_{\sf AD}}$ are similar.
This underscores the intermediate nature of Lipschitz similarity,
in that 
\begin{equation}\label{eq2z2}
A^{\Phi}, A^{\Psi} \enspace \mbox{\rm similar} \quad
\Longrightarrow \quad
A^{\Phi}, A^{\Psi} \enspace \mbox{\rm Lipschitz similar} \quad
\Longrightarrow \quad
A^{\Phi}, A^{\Psi} \enspace \mbox{\rm Lyapunov similar}  \, ;
\end{equation}
neither implication in (\ref{eq2z2}) can be reversed in general
for $d\ge 2$, though the left and right implication is reversible
whenever $\Phi$, $\Psi$ are anti-diagonal and diagonal, respectively.

For a simple illustration of (\ref{eq2z2}) for $d=2$, fix any $A\in
\R^{2\times 2}$. Then, up to
multiplication by $\alpha \in \R \setminus \{0\}$, the matrix $A$ is
{\em similar\/} either to $J_1(i)$, $J_2$, $J_2(1)$, or to precisely one of
$J_1(1+ib)$ with $b\in \R^+$ or 
\begin{equation}\label{eq2z3}
O_2, \, \mbox{\rm diag}[a,1] \:\mbox{\rm with} \: a\in [-1,1] \, ;
\end{equation}
$A$ is {\em Lipschitz similar\/} precisely to one of $J_2, J_2(1)$ or (\ref{eq2z3});
and $A$ is {\em Lyapunov similar\/} precisely to one of (\ref{eq2z3}); see
also Figure \ref{fig21}.

\begin{figure}[ht] 
  \psfrag{tl1a}[]{$J_1(i)$}
  \psfrag{tl1b}[]{$O_2$}
  \psfrag{tl1c}[]{$ J_2$}
  \psfrag{tl1e}[]{$\mbox{\rm diag}\, [a,1]$ with $a\in [-1,1]$}
  \psfrag{tl1d}[]{$J_2(1)$}
  \psfrag{tl1f}[]{$J_1(1+ib)$ with $b\in \R^+$}
  \psfrag{tl1g}[]{ $J_2(1)$}
   \psfrag{tarr}[]{$\Longrightarrow$}
  \psfrag{tsmoo}[]{{\em similar}}
  \psfrag{tlip}[]{{\em Lipschitz similar}}
  \psfrag{thoel}[]{{\em Lyapunov similar}}
%
%
%
\vspace*{2mm}
\begin{center}
\includegraphics{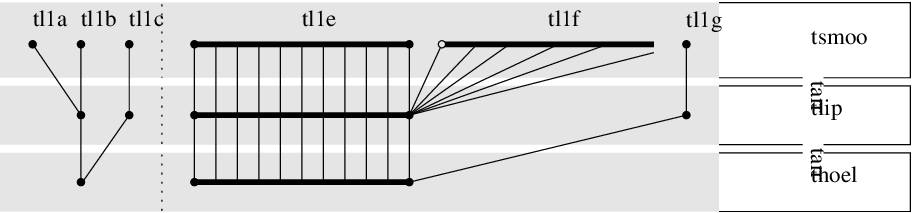}
\end{center}
\vspace*{-4mm}
\caption{Given $A\in \R^{2\times 2}$, the matrix $\alpha A$ is similar, Lipschitz
  similar, or Lyapunov similar for some $\alpha \in \R
  \setminus \{0\}$ to precisely one of the
  matrices shown in the respective row; the scalar $\alpha$ is
  uniquely determined if and only if $\sigma (A)\not \subset i\R$,
  visualized here as the cases to the right of the dotted line.}\label{fig21}
\end{figure}

This section concludes with two preparatory observations
regarding linear flows that are going to be instrumental later on. 
The first observation asserts that the Lyapunov spaces of {\em stable\/}
flows, and hence also their Lyapunov exponents, are well-behaved under
H\"{o}lder equivalence. The result plays a crucial role in
the H\"{o}lder classification of linear flows \cite{BW2, MM}.

\begin{prop}{{\rm \cite[Lem.\ 6.6]{BW2}}}\label{prop2bb}
Let $\Phi$, $\Psi$ be stable flows on $X$. Assume that $\Phi
\stackrel{h}{\thicksim} \Psi$ for some $h\in \cH_{1^-}(X)$. Then there
exists a unique $\alpha \in \R^+$ so that $\Lambda^{\Phi} = \alpha \Lambda^{\Psi}$,
\begin{equation}\label{eq3_4A}
h \bigl( L^{\Phi} (\alpha s)\bigr) = L^{\Psi} (s) \qquad \forall s \in
\R \, ,
\end{equation}
as well as
\begin{equation}\label{eq3_4B}
\lim\nolimits_{t\to \infty} \frac{\tau_x (t)}{t} = \alpha \qquad
\forall x \in X \setminus \{0\} \, .
\end{equation}
\end{prop}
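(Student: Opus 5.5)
The plan is to extract $\alpha$ from the asymptotic growth of the time re-parametrizations $\tau_x$. I would first show that each individual $\tau_x$ has a limiting slope, and then show that this slope does not depend on $x$. The main obstacle is the second step.

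\emph{Step 1 (orientation).} Since $\Phi$, $\Psi$ are stable, $\Phi_t x\to 0$ as $t\to\infty$ for every $x$, and $h(0)=0$. Hence $\Psi_{\tau_x(t)}h(x)\to 0$, which for a stable $\Psi$ and $h(x)\neq 0$ forces $\tau_x(t)\to +\infty$. So every $\tau_x$ with $x\ne 0$ is increasing, consistent with Proposition \ref{lemH1}.

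\emph{Step 2 (slopes $\tau_x(t)/t$ via Hölder estimates).} Fix $0<\beta<1$. Since $h(0)=0$ and $h\in\cH_{\beta}$, there are $C,r>0$ with $|h(y)|\le C|y|^{\beta}$ and $|y|\le C|h(y)|^{\beta}$ for all small $y$. Put $y=\Phi_t x$ with $t$ large and take logarithms:
$$
\beta\log|\Phi_t x| + \log C \;\ge\; \log\bigl|\Psi_{\tau_x(t)}h(x)\bigr| \;\ge\; \beta^{-1}\bigl(\log|\Phi_t x|-\log C\bigr).
$$
Now use $\log|\Phi_t x|/t\to\lambda_+^{\Phi}(x)$ and $\log|\Psi_s h(x)|/s\to\lambda_+^{\Psi}(h(x))$ as $s=\tau_x(t)\to\infty$; both exponents are negative. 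Dividing through, every limit point of $\tau_x(t)/t$ lies in $\bigl[\beta\,\lambda_+^{\Phi}(x)/\lambda_+^{\Psi}(h(x)),\,\beta^{-1}\lambda_+^{\Phi}(x)/\lambda_+^{\Psi}(h(x))\bigr]$. Letting $\beta\uparrow 1$ gives
$$
\lim_{t\to\infty}\tau_x(t)/t=\alpha(x):=\lambda_+^{\Phi}(x)/\lambda_+^{\Psi}(h(x))\in\R^+ .
$$
In particular $\alpha(x)$ takes only finitely many values, namely ratios of real parts of eigenvalues.

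\emph{Step 3 (constancy of $\alpha$; main obstacle).} To show $\alpha(x)$ is independent of $x$, I would prove (\ref{eq3_4A}) simultaneously, by induction along the filtration $L^{\Phi}(s)$. I would try the following.

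First, the condition $\lambda_+^{\Phi}(x)\le s$ is equivalent to $|\Phi_t x|\le e^{(s+\varepsilon)t}$ for large $t$ and every $\varepsilon>0$. By Step 2 this condition transports under $h$ to the decay condition on the $\Psi$-orbit of $h(x)$, with time rescaled by $\alpha(x)$.

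Second, I would use the mixing property of exponents: if $x\in L^{\Phi}(s)$ and $y\notin L^{\Phi}(s)$, then $\lambda_+^{\Phi}(x+y)=\lambda_+^{\Phi}(y)$. Combine this with the Hölder bound on differences,
$$
|h(\Phi_t u)-h(\Phi_t v)|\le C|\Phi_t(u-v)|^{\beta},
$$
applied to points $u,v$ whose difference decays fast. This should show that $h$ maps the open dense set of $\Phi$-generic points (those with the top exponent) onto $\Psi$-generic points with a common ratio $\alpha$. The reason is that $\alpha$ is locally constant on this open dense set: the finitely many level sets of $\alpha$ are closed there, by the decay characterization above. Connectedness of that set when $\dim\ge 2$, plus a separate treatment when the top block is one-dimensional, should then give a single $\alpha$ on it.

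Third, I would pass to lower exponents. Since $h(L^{\Phi}(s))$ is contained in a $\Psi$-Lyapunov subspace and is a topological manifold of the same dimension, invariance of domain yields equality of the subspaces. Restricting $h$ to $L^{\Phi}(s)$ then gives the induction step. Matching dimensions at each stage yields $\ell^{\Phi}(\alpha s)=\ell^{\Psi}(s)$, that is, $\Lambda^{\Phi}=\alpha\Lambda^{\Psi}$. Uniqueness of $\alpha$ is immediate because $\Lambda^{\Psi}\neq O_d$.
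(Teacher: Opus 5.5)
Note first that the paper does not prove Proposition~\ref{prop2bb}; it quotes it from \cite{BW2}, so your outline is judged on its own merits. Steps~1 and~2 are correct. They give $\tau_x(t)/t\to\alpha(x):=\lambda_+^{\Phi}(x)/\lambda_+^{\Psi}\bigl(h(x)\bigr)$ for every $x\ne 0$. Once $\alpha(x)$ is known to be independent of $x$, both (\ref{eq3_4A}) and $\Lambda^{\Phi}=\alpha\Lambda^{\Psi}$ follow quickly from invariance of dimension. So all the substance lies in Step~3, and there the argument has a genuine gap.

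Your decay characterization only shows that each $h^{-1}\bigl(L^{\Psi}(r)\bigr)$ is closed, i.e.\ that $x\mapsto\lambda_+^{\Psi}(h(x))$ is lower semicontinuous. On the generic set this makes the sublevel sets $\{\alpha\le c\}$ closed, not the level sets. Closedness of the level sets is exactly the local constancy you are trying to prove. The mixing property combined with the H\"older bound on differences, once worked out, only shows that $\lambda_+^{\Psi}\circ h$ is constant on each coset $x+L^{\Phi}\bigl(\lambda_+^{\Phi}(x)^-\bigr)$. That is vacuous when $\Phi$ has a single exponent.

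Here is a test case showing that your tools cannot suffice. Let $\Phi$, $\Psi$ be generated by $-I_2$ and $\mbox{\rm diag}\,[-2,-1]$. For $|\omega|=1$ and $r>0$ put $g(r\omega)=\Psi_s\omega$, where $s$ solves $|\Psi_s\omega|=r$. Then $g$ is a homeomorphism mapping rays onto $\Psi$-orbits, with $|g(x)|=|x|$. It therefore satisfies every estimate used in Step~2 and in your decay characterization, and the coset statement is empty. Yet $\alpha=1/2$ on $\mbox{\rm span}\{e_1\}$ and $\alpha=1$ elsewhere. This matches Theorem~\ref{thm6_4}: the two flows are equivalent via $\cH_{{\sf pw}1}$, where the conclusion of the proposition fails. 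So your Step~3, as outlined, would also ``prove'' a false statement.

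A second, separate gap is in the induction along the filtration. It produces one ratio per Lyapunov level, but nothing forces these ratios to coincide. Think of $\mbox{\rm diag}\,[-2,-1]$ versus $\mbox{\rm diag}\,[-3,-1]$, where the level-wise ratios would be $1$ and $2/3$.

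What is missing is a quantitative use of $|h(u)-h(v)|\le C|u-v|^{\beta}$ with $\beta$ close to $1$. It must be applied to families of pairs whose initial separation is tied to the time scale, not to fixed pairs. In the example, suppose $h(x_0)=(a,0)$ and $h(x)=(a,b)$ with $0<b\ll a$.
\begin{itemize}
\item H\"older continuity of $h^{-1}$ gives $|x-x_0|\lesssim b^{\beta}$. Since $\Phi_t$ is scalar, the orbits $\Phi_t x$, $\Phi_t x_0$ then stay at relative distance $\lesssim b^{\beta}$ for all $t$.
\item On the other side, $\Psi_u h(x)=(ae^{-u\cdot 2},be^{-u})$ reaches relative distance $\ge 1/\sqrt{2}$ from the $e_1$-axis at $u=\log(a/b)$. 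Since $h(\Phi_t x_0)$ lies on that axis, this happens at a scale $\varepsilon$ that is a fixed power of $b$.
\item From $|h(u)-h(v)|/|h(v)|\le C'\bigl(|u-v|/|v|\bigr)^{\beta}|v|^{\beta-1/\beta}$, a map in $\cH_{\beta}$ can inflate relative distances at scale $\varepsilon$ only by the factor $\varepsilon^{-(1/\beta-\beta)}$. This is a power of $b$ with exponent close to $0$.
\item Going from relative distance $b^{\beta}$ to relative distance of order $1$ is therefore impossible as $b\to 0$ once $\beta$ is close to $1$.
\end{itemize}
An argument of this scale-coupled kind, both within the top level and linking different levels, is what the proposition really needs, and your Step~3 does not supply it.
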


\begin{rem}\label{rem3_1A}
(i) The first conclusion in Proposition \ref{prop2bb} carries over to
{\em arbitrary\/} linear flows $\Phi$, $\Psi$ on $X$ as follows: If
$\Phi \stackrel{1^-}{\thicksim} \Psi$ then $\Lambda^{\Phi} = \alpha
\Lambda^{\Psi}$ for some $\alpha \in \R \setminus \{0\}$; see
\cite[Thm.\ 6.4]{BW2}. By contrast, simple examples show that
(\ref{eq3_4A}) and (\ref{eq3_4B}) do not even carry over to {\em hyperbolic\/}
flows in general, and convergence in (\ref{eq3_4B}) can be arbitrarily
slow. However, under the stronger assumption that $h\in \cH_1(X)$,
both conclusions are going to be strengthened significantly by Theorem
\ref{lem3aol1} below.

(ii) In \cite{CK,KS} the term {\em Lyapunov space\/} refers not to
$L^{\Phi}(s)$ but instead to the $\Phi$-invariant subspace
$$
\widetilde{L^{\Phi}}(s):= \bigoplus \bigl\{X_{\ell}^{\Phi} : \sigma ( \Phi|_{\R\times
  X_{\ell}^{\Phi}} ) \subset s +i\R \bigr\}  \qquad \forall s\in \R \, . 
$$
Based on $\widetilde{L^{\Phi}}$, the argument proving \cite[Thm.\ 5.6]{KS}
proceeds on the assumption that a Lipschitz conjugacy $h$ ``surely
preserves the Lyapunov spaces''; in other words, if $\Phi
\stackrel{h}{\cong} \Psi$ with $h\in \cH_1(X)$ then $h\bigl(
\widetilde{L^{\Phi}}(s)\bigr) = \widetilde{L^{\Psi}}(s)$ for all
$s\in \R$. (A similar assumption seems to underpin \cite{MM}.) For a simple demonstration that this assumption is 
unwarranted in general even for stable flows, take $d=2$ and let
$\Phi$ be generated by $\mbox{\rm diag}\, [-2,-1]$, in which case
$$
L^{\Phi}(s) = \left\{
  \begin{array}{ll}
    \{0\} & \mbox{\rm if } s< -2 \, , \\
    \mbox{\rm span}\{e_1\} & \mbox{\rm if } -2\le s< -1 \, , \\
      \R^2  & \mbox{\rm if } s\ge - 1 \, , \\
    \end{array}
  \right.
  \quad
  \widetilde{L^{\Phi}}(s) = \left\{
  \begin{array}{ll}
    \mbox{\rm span}\{e_1\} & \mbox{\rm if } s=-2  \, , \\
    \mbox{\rm span}\{e_2\} & \mbox{\rm if } s=-1 \, , \\
      \{0\} & \mbox{\rm otherwise }. \\
    \end{array}
  \right.
  $$
Now, fix any diffeomorphism $f:\R \to \R$ with the property that
\begin{equation}\label{eq3_rem32A}
\sup\nolimits_{t\in \R} \bigl( |f(t) - t| + |f(t) - tf'(t)| \bigr) <
\infty \, ,
\end{equation}
and define $h_f:\R^2 \to \R^2$ as
$$
h_f(x) = \left\{ \begin{array}{cl}
                   x & \mbox{\rm if } x\in \mbox{\rm span}\{e_1\} \, , \\[2mm]
                   \left[
                   \begin{array}{c}
x_2^2 f(x_1/x_2^2) \\[-1mm]  x_2
                     \end{array}
                   \right] & \mbox{\rm if } x \not \in \mbox{\rm span}\{e_1\} \, .
                   \end{array}
  \right.
$$
Note that $h_f^{-1} = h_{f^{-1}}$, and from (\ref{eq3_rem32A}) it is
readily deduced that $h_f \in \cH_1(\R^2)$. Moreover, for every $x\not
\in \mbox{\rm span}\{e_1\}$,
$$
h_f (\Phi_t x) = h_f \left(
  \left[
\begin{array}{c} e^{-2t} x_1 \\[-1mm] e^{-t} x_2 \end{array}
    \right]
  \right) = \left[
\begin{array}{c} e^{-2t} x_2^2 f(x_1/x_2^2) \\ e^{-t} x_2 \end{array}
    \right] = \Phi_t h_f (x) \qquad \forall t \in \R \, ,
$$
and the two outer-most expressions agree also if $x\in \mbox{\rm span}\{e_1\}$. Thus $\Phi
\stackrel{h_f}{\cong}\Phi$. Nonetheless, notice that
$$
h_f\bigl( \widetilde{L^{\Phi} }(-1) \bigr)  = \bigl\{ x\in \R^2 : x_1 = f(0)
x_2^2\bigr\} \, .
$$
Whenever $f(0)\ne 0$ the set $h_f\bigl( \widetilde{L^{\Phi}
}(-1) \bigr) $ is a parabola and hence {\em not\/} a subspace, let
alone equal to $\widetilde{L^{\Phi}} (-1)$. By contrast, $h_f\bigl(L^{\Phi}(s)
\bigr) = L^{\Phi}(s)$ for all $s\in \R$, in accordance with
Proposition \ref{prop2bb} and regardless of the choice of
$f$. This example illustrates how $\widetilde{L^{\Phi}}$ may behave
poorly under equivalence. Since the good behaviour of key objects is
crucial for the purpose of the present article, the spaces $\widetilde{L^{\Phi}}$ will not be
considered further here.  
\end{rem}

The second observation is a (marginally reformulated) version of \cite[Lem.\
5.1]{BW2}. To state it concisely, given any $m\in \N$ and $b\in \R^m$,
define ${\sf d}(b)\in \{m , \ldots, 2m \}$ as
$$
{\sf d}(b) = \sum\nolimits_{j=1}^m {\sf d}(b_j) =  \# \{1\le j\le
m: b_j = 0\} + 2 \# \{1\le j\le
m: b_j \ne  0\} \, ;
$$
for $m=1$, this is consistent with earlier usage of ${\sf d}(b)$.

\begin{prop}\label{prop1zz}
Given $k,\ell \in \N$ and $b\in \R^k$, $c\in \R^{\ell}$ with ${\sf
  d}(b)={\sf d}(c) = d$, let
$\Phi$, $\Psi$ be the flows on $\R^d$ generated by
$$
\mbox{\rm diag}\, [J_1(ib_1), \ldots , J_1 (ib_k)] \, , \quad
\mbox{\rm diag}\, [J_1(ic_1), \ldots , J_1 (ic_{\ell}) ] 
$$
respectively. Then the following statements are equivalent:
\begin{enumerate}
\item there exists an open set $U\subset \R^{d}$ with $0\in U$
  and a continuous one-to-one function $f:U\to \R^{d}$ so that
  $T_x^{\Phi} = T_{f(x)}^{\Psi}$ for every $x\in U$;
\item there exists a bijection $g: \{1, \ldots , k\}\to \{1, \ldots, \ell\}$ so that
  $|b_j| = |c_{g(j)}|$ for every $j$;
\item $\Phi \stackrel{{\sf lin}}{\cong}\Psi$.
\end{enumerate}
\end{prop}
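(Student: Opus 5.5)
The plan is to prove (iii)$\Rightarrow$(ii)$\Rightarrow$(i)$\Rightarrow$(iii). The implications (iii)$\Rightarrow$(ii) and (ii)$\Rightarrow$(i) are immediate. Under (iii), write $b=(b_1,\ldots,b_k)$ and $c=(c_1,\ldots,c_\ell)$, and let $g$ be the bijection with $|b_j|=|c_{g(j)}|$. Group the two-dimensional blocks $J_1(ib_j)$ according to $g$, and pad with the one-dimensional zero blocks; these are matched because ${\sf d}(b)={\sf d}(c)$ and $g$ preserves whether an entry vanishes. It then suffices to treat single blocks. For $b_j=c_{g(j)}$ the identity map works. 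For $b_j=-c_{g(j)}\ne 0$, the reflection $\mathrm{diag}[1,-1]$ conjugates $J_1(ib_j)$ to $J_1(-ib_j)$. Taking the direct sum of these linear maps gives $\Phi\stackrel{{\sf lin}}{\cong}\Psi$. From (ii) to (i), take $f$ to be that linear isomorphism restricted to any neighbourhood $U$ of $0$. A conjugacy satisfies $h\circ\Phi_t=\Psi_t\circ h$ with $h$ injective, so $\Phi_t x=x$ if and only if $\Psi_t h(x)=h(x)$, and therefore $T^\Phi_x=T^\Psi_{h(x)}$.

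The substantive direction is (i)$\Rightarrow$(ii). The first step is to compute the period function explicitly. Write $x=(x^{(1)},\ldots,x^{(k)})$ in blocks. Then $T^\Phi_x$ is the smallest positive $T$ such that $b_jT\in 2\pi\Z$ for every $j$ with $b_j\ne0$ and $x^{(j)}\ne0$. Hence $T^\Phi_x=0$ precisely when all such blocks vanish, and otherwise $T^\Phi_x$ is finite only when the frequencies involved are rationally commensurable. So the period function is constant on each ``stratum'' $\{x: x^{(j)}\ne 0 \text{ exactly for } j\in S\}$ and is determined by the set $\{|b_j|: j\in S\}$. Near $0$, $U$ meets every stratum.

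The second step is to recover the multiset $\{|b_j|\}$ from the values and topology of the period function near $0$. The strata with a single nonzero nonzero-frequency block ($S=\{j\}$, together with arbitrary components in the kernel of $A^\Phi$) give the periods $2\pi/|b_j|$. To see this intrinsically, take $m=\dim\ker A^\Phi$. Consider $F_\tau=\{x: T^\Phi_x\in\{0\}\cup\tau\N^{-1}\}$, the set of points whose period divides $\tau$ (where $\tau\N^{-1}=\{\tau/n: n\in\N\}$), which equals $\mathrm{Per}_\tau\Phi$. This is the linear subspace $\ker A^\Phi\oplus\bigoplus\{\text{blocks } j: \tau|b_j|\in 2\pi\Z\}$, of dimension $m+2\#\{j:\tau|b_j|\in2\pi\Z\}$. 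Because $f$ matches periods exactly, $f(F_\tau\cap U)=F'_\tau\cap f(U)$, where $F'_\tau$ is the analogous subspace for $\Psi$. Also, $f$ is a continuous injection, so by invariance of domain it is an open embedding. A homeomorphism between relatively open pieces of subspaces preserves dimension, so invariance of domain again gives
\[
m+2\#\{j:\tau|b_j|\in2\pi\Z\}=m'+2\#\{i:\tau|c_i|\in2\pi\Z\}\quad\text{for all }\tau>0 .
\]
Taking $\tau$ with no $\tau|b_j|$ or $\tau|c_i|$ in $2\pi\Z$ (all but countably many $\tau$ qualify) gives $m=m'$. Hence the counting functions $N_b(\tau)=\#\{j:\tau|b_j|\in2\pi\Z\}$ and $N_c$ agree for every $\tau$.

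The final and main obstacle is to show that equality of the functions $N_b=N_c$ forces equality of the multisets of $|b_j|$ and $|c_i|$. This is the same as saying that the multisets of positive reals $p_j=2\pi/|b_j|$ and $q_i$ have the same ``multiple-counting'' functions $\tau\mapsto\#\{j:\tau\in p_j\N\}$. I would argue by induction, removing the smallest element. The least $\tau>0$ with $N_b(\tau)>0$ is $\min_j p_j$, and its value $N_b(\min p)$ counts the multiplicity of that minimum together with any larger $p_j$ equal to it, which are none. So the minima agree, with the same multiplicity. Subtract the contribution $\mathbf 1_{\tau\in p\N}$ of those copies from both functions and repeat. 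This terminates and yields $k=\ell$ together with the bijection $g$.
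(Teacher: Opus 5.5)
Your proof is correct. The paper does not prove this proposition itself; it is imported, in marginally reformulated form, as Lemma~5.1 of the authors' earlier paper. So there is no in-paper argument to compare routes with, and yours stands as a complete, self-contained proof. The key steps are all right:
\begin{itemize}
\item The sets $\{x : T^\Phi_x\in\{0\}\cup\{\tau/n : n\in\N\}\}$ are exactly the linear subspaces $\mbox{\rm Per}_\tau\Phi=\ker(\Phi_\tau-I_d)$.
\item A period-matching $f$ carries $\mbox{\rm Per}_\tau\Phi\cap U$ onto $\mbox{\rm Per}_\tau\Psi\cap f(U)$.
\item Invariance of domain, used once to make $f$ an open embedding and once for invariance of dimension, gives equal dimensions of $\mbox{\rm Per}_\tau$ for every $\tau>0$.
\item A generic $\tau$ then matches the numbers of zero entries.
\item Peeling off the smallest period together with its multiplicity recovers the multiset $\{|b_j| : b_j\ne 0\}$, hence $k=\ell$ and $g$.
\end{itemize}

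There are two slips to fix.

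First, your implication chain is mislabeled. What you actually prove is:
\begin{itemize}
\item (ii)$\Rightarrow$(iii): permute the blocks and conjugate $J_1(ib_j)$ to $J_1(-ib_j)$ by $\mbox{\rm diag}\,[1,-1]$;
\item (iii)$\Rightarrow$(i): restrict the linear conjugacy to $U$;
\item (i)$\Rightarrow$(ii): the main argument.
\end{itemize}
That is a full cycle, so nothing is missing. But ``Under (iii) \ldots\ let $g$ be the bijection'' should read ``Under (ii)'', and the claimed-immediate (iii)$\Rightarrow$(ii) is never used.

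Second, since $\tau\cdot 0\in 2\pi\Z$, every count of the form $\#\{j : \tau|b_j|\in 2\pi\Z\}$ must be restricted to $j$ with $b_j\ne 0$. This applies both in your formula for $\dim F_\tau$ and in the choice of a ``generic'' $\tau$. Otherwise the dimension formula is off, and no generic $\tau$ exists as soon as some $b_j=0$. With $N_b(\tau)=\#\{j : b_j\ne 0,\ \tau\in (2\pi/|b_j|)\N\}$ and $\dim F_\tau = m+2N_b(\tau)$, everything goes through as you wrote it. The paragraph on strata can be dropped; it plays no role.
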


\section{Distortion points for stable flows}\label{sec3}

This section introduces a simple yet important property that a point
may or may not have relative to a stable flow on $X$. The importance
of this property stems from the following basic fact about any bi-Lip\-schitz
homeomorphism of $X$; see \cite[Lem.\ A1]{MM}. 

\begin{lem}\label{prop3}
Let $h\in \cH_1(X)$. For any two sequences $(x_n),
(y_n)$ in $X\setminus \{0\}$ with $x_n \to 0$, $y_n \to 0$ the
following statements are equivalent:
\begin{enumerate}
\item $|x_n - y_n|/|y_n| \to 0$;
\item $|h(x_n) - h(y_n)|/|h(y_n)| \to 0$. 
\end{enumerate}
\end{lem}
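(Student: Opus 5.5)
Since $h\in\cH_1(X)$ and $h^{-1}\in\cH_1(X)$ play symmetric roles, with $h^{-1}$ mapping the pair $(h(x_n),h(y_n))$ back to $(x_n,y_n)$, it suffices to prove (i)$\Rightarrow$(ii). The reverse implication then follows by applying the same argument to $h^{-1}$ and the sequences $h(x_n)\to 0$, $h(y_n)\to 0$. Both sequences converge to $0$ because $h$ is a homeomorphism with $h(0)=0$, and both lie in $X\setminus\{0\}$ because $h$ is injective.

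First fix $r\in\R^+$ as in (\ref{eq2_0}), so that $h$ is $L$-Lipschitz on $B_r(0)$ for some $L\ge 1$. Since $h^{-1}$ is continuous with $h^{-1}(0)=0$, we may shrink to a radius $\rho>0$ with $h(B_\rho(0))\subset B_r(0)$. Then $h^{-1}$ is also $L$-Lipschitz on $h(B_\rho(0))$, after enlarging $L$ if needed. Because $x_n,y_n\to 0$, for all large $n$ the points $x_n$, $y_n$ and $0$ all lie in $B_\rho(0)$.

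For such $n$, the Lipschitz bound for $h$ gives
$$|h(x_n)-h(y_n)|\le L|x_n-y_n| .$$
Applying the Lipschitz bound for $h^{-1}$ to the points $h(y_n)$ and $h(0)=0$ gives
$$|y_n| = |h^{-1}(h(y_n)) - h^{-1}(0)| \le L|h(y_n)| ,$$
that is, $|h(y_n)|\ge |y_n|/L$. Combining the two estimates,
$$\frac{|h(x_n)-h(y_n)|}{|h(y_n)|}\le L^2\,\frac{|x_n-y_n|}{|y_n|}\to 0 ,$$
which proves (ii).

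There is no serious obstacle. The only point needing care is that the Lipschitz condition holds only near $0$. One must therefore make sure that $0$, $x_n$, $y_n$ and their images all lie in the neighbourhoods where the respective Lipschitz bounds apply. This is exactly what the preliminary choice of $\rho$ and the restriction to large $n$ secure.
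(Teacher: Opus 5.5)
Your proof is correct and is essentially the paper's argument: both use a common Lipschitz constant for $h$, $h^{-1}$ on a small ball around $0$ to obtain $|h(x_n)-h(y_n)|\le L|x_n-y_n|$ and $|h(y_n)|\ge |y_n|/L$. The paper gets both directions at once from the two-sided bound $\kappa^{-2}|x_n-y_n|/|y_n|\le |h(x_n)-h(y_n)|/|h(y_n)|\le \kappa^{2}|x_n-y_n|/|y_n|$, whereas you obtain the converse by applying the same argument to $h^{-1}$, and either works. One small slip: choosing $\rho\le r$ with $h(B_\rho(0))\subset B_r(0)$ uses continuity of $h$ at $0$, not of $h^{-1}$.
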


\begin{proof}
Denote by $\kappa \ge 1$ a common Lipschitz constant for $h,h^{-1}$ on
some neighborhood $U$ of $0$. Pick $0<\varepsilon < 1$ so small that
$B_{\varepsilon}(0)\subset U  \cap h^{-1} (U)$. Then $|x-y|/\kappa \le
|h(x)-h(y)|\le |x-y|\kappa$ for all $x, y\in B_{\varepsilon}(0)$. Consequently, given any two sequences
$(x_n),(y_n)$ in $X \setminus \{0\}$ with $x_n \to 0$, $y_n \to 0$,
there exists $N\in \N$ so that  
$$
0\le  \frac{|x_n - y_n|}{\kappa^2 |y_n|} \le \frac{\kappa |h(x_n) -
  h(y_n)|}{\kappa^2 |h(y_n)|/\kappa} =  \frac{|h(x_n) - h(y_n)|}{|h(y_n)|} \le
 \frac{\kappa|x_n - y_n|}{|y_n|/\kappa} = \kappa^2 \frac{|x_n - y_n|}{|y_n|}
\qquad \forall n \ge N \, .
$$
From this it is obvious that indeed (i)$\Leftrightarrow$(ii).
\end{proof}

To harness Lemma \ref{prop3} for the study of stable flows, for
convenience let 
$$
\cR  = \bigl\{ \rho : \R \to \R \enspace \mbox{\rm is continuous
  with} \: 
\lim\nolimits_{t\to \infty} \rho (t) = \infty \bigr\} \, ,
$$
and observe that $\cR$ is closed under composition.
Given $\delta>0$ and a stable flow $\Phi$ on $X$, say that $x\in
X$ is $\delta$-{\bf distorting} (for $\Phi$) if for every
$\varepsilon>0$ there exists a $y\in B_{\varepsilon}(x)\setminus \{0\}$ so that 
$$
\limsup\nolimits_{t \to \infty} \frac{|\Phi_t x -
  \Phi_{\rho(t)}y|}{|\Phi_{\rho(t)} y|} \ge \delta \qquad \forall \rho \in
\cR \, .
$$
Say that $x$ is a {\bf distortion point} if $x$ is 
$\delta$-distorting for some (and hence every sufficiently small)
$\delta>0$. Informally put, $x$ is a distortion point for $\Phi$ if
there exist, arbitrarily close to $x$, orbits 
that are noticeably non-tangential to $\Phi_{\R} x$ in forward
time. Denote by $D^{\Phi}$ the set of all distortion points for
$\Phi$, i.e., let 
$$
D^{\Phi}= \bigcup_{\delta>0} \Bigl\{
x\in X : x \: \mbox{\rm is $\delta$-distorting for $\Phi$} 
\Bigr\} \, .
$$
Trivially $x=0$ is $1$-distorting, so $0\in D^{\Phi}$ always.
As $x$ belonging to $D^{\Phi}$ indicates a lack of tangency
between orbits near $\Phi_{\R}x$, it seems plausible, not least in
light of Lemma \ref{prop3}, that $D^{\Phi}$ may be preserved under
Lipschitz equivalence. This indeed is the case.

\begin{lem}\label{lem3_1}
Let $\Phi$, $\Psi$ be stable flows on $X$. Assume that $\Phi
\stackrel{h}{\thicksim} \Psi$ for some $h\in \cH_1(X)$. Then $h(D^{\Phi})
= D^{\Psi}$.
\end{lem}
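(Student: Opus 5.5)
By symmetry it suffices to prove $h(D^{\Phi}) \subset D^{\Psi}$. Indeed, $\Psi \stackrel{h^{-1}}{\thicksim} \Phi$ with $h^{-1}\in \cH_1(X)$, so applying the inclusion to $h^{-1}$ gives $h^{-1}(D^{\Psi})\subset D^{\Phi}$. Since $h(0)=0$ and $0$ lies in both sets, fix $x\in D^{\Phi}\setminus\{0\}$, $\delta$-distorting for $\Phi$. The goal is to show that $h(x)$ is $\delta'$-distorting for $\Psi$ for some $\delta'>0$ depending only on $\delta$ and on the bi-Lipschitz constant $\kappa$ of $h$ near $0$. I will argue by contraposition.

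Suppose $h(x)$ is not $\delta'$-distorting. Then there is $\varepsilon'>0$ such that for every $y'\in B_{\varepsilon'}(h(x))\setminus\{0\}$ some $\rho'\in\cR$ satisfies
$$
\limsup\nolimits_{t\to\infty} |\Psi_t h(x) - \Psi_{\rho'(t)}y'|/|\Psi_{\rho'(t)}y'|<\delta' .
$$
By continuity of $h$ at $x$, choose $\varepsilon>0$ with $h(B_\varepsilon(x))\subset B_{\varepsilon'}(h(x))$. Now take any $y\in B_\varepsilon(x)\setminus\{0\}$. Setting $y'=h(y)$ gives a $\rho'\in\cR$ as above, and I need to produce $\rho\in\cR$ with
$$
\limsup\nolimits_{t\to\infty} |\Phi_t x-\Phi_{\rho(t)}y|/|\Phi_{\rho(t)}y|<\delta ,
$$
which contradicts $x$ being $\delta$-distorting.

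The time changes come from Proposition \ref{prop12}: $h(\Phi_t x)=\Psi_{\tau_x(t)}h(x)$ and $h(\Phi_s y)=\Psi_{\tau_y(s)}h(y)$. Since both flows are stable, Proposition \ref{lemH1} lets me assume every $\tau_x$ is increasing, so each $\tau_z$ is an increasing homeomorphism of $\R$ with $\tau_z(t)\to\infty$ as $t\to\infty$. (If instead all $\tau$ were decreasing, forward orbits would be mapped to backward orbits; this is impossible because points near $0$ go to $0$ forward in time while backward orbits of a stable flow are unbounded. So the increasing case holds anyway.) Define
$$
\rho := \tau_y^{-1}\circ\rho'\circ\tau_x ,
$$
which is continuous and tends to $\infty$, so $\rho\in\cR$ because $\cR$ is closed under composition. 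By construction
$$
h(\Phi_t x)=\Psi_{\tau_x(t)}h(x), \qquad h(\Phi_{\rho(t)}y)=\Psi_{\rho'(\tau_x(t))}h(y).
$$
Both $\Phi_t x\to0$ and $\Phi_{\rho(t)}y\to0$, so for large $t$ all points lie in the ball where $h$ is $\kappa$-bi-Lipschitz. The quantitative estimate in the proof of Lemma \ref{prop3} then gives, for large $t$,
$$
\frac{|\Phi_t x-\Phi_{\rho(t)}y|}{|\Phi_{\rho(t)}y|}\le \kappa^2\,\frac{|\Psi_{s}h(x)-\Psi_{\rho'(s)}h(y)|}{|\Psi_{\rho'(s)}h(y)|}, \qquad s=\tau_x(t)\to\infty .
$$
Taking $\limsup$ bounds the left side by $\kappa^2\delta'$. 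Choosing $\delta'=\delta/(2\kappa^2)$ yields the contradiction, so $h(x)$ is $\delta'$-distorting and lies in $D^{\Psi}$.

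The main obstacle is justifying the time changes. Specifically, I must make sure that $\tau_x$ and $\tau_y$ are increasing homeomorphisms tending to $\infty$, so that $\rho$ is a genuine element of $\cR$ and $s=\tau_x(t)$ runs through all large reals. I will handle this with Proposition \ref{lemH1} together with the stability argument sketched above. The remaining steps are routine applications of the inequality chain from Lemma \ref{prop3}.
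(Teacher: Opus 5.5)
Your argument is correct and is essentially the paper's proof written in contrapositive form. The paper likewise transports an arbitrary $\rho\in\cR$ via $\tau_y^{-1}\circ\rho\circ\tau_x$ and applies the $\kappa^2$-estimate from the proof of Lemma~\ref{prop3}, concluding directly that $h(x)$ is $\delta/\kappa^2$-distorting, so your extra factor $2$ is not needed.

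One small correction: Proposition~\ref{lemH1} only asserts the existence of some possibly different homeomorphism and says nothing about the given $h$, so drop that appeal and run your parenthetical argument pointwise. For each $x\ne 0$, $h(\Phi_t x)\to 0$ as $t\to\infty$ while $|\Psi_s h(x)|\to\infty$ as $s\to-\infty$, which forces $\tau_x$ to be increasing. This is exactly what the paper means by ``plainly $\tau_x$ is increasing''.
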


\begin{proof}
Plainly $\tau_x$ is increasing for every $x\in X\setminus \{0\}$, and hence $\tau_x
\in \cR$. (Unlike $\tau_x$, however, a typical element
of $\cR$ may be neither one-to-one nor onto.) To show that
$h(D^{\Phi}) \subset D^{\Psi}$, assume that $x\in X$ is
$\delta$-distorting for $\Phi$
for some $\delta > 0$. Let $\kappa \ge 1$ be a Lipschitz constant for $h$, $h^{-1}$ on
some neighborhood of $0$. Given $\varepsilon >0$, pick $\varepsilon_0 >0$ so
small that $h \bigl( B_{\varepsilon_0}(x)\bigr)\subset
B_{\varepsilon} \bigl( h(x)\bigr)$. Since $x$ is $\delta$-distorting,
there exists a $y\in B_{\varepsilon_0}(x)\setminus \{0\}$ with 
\begin{equation}\label{eq31}
\limsup\nolimits_{t \to \infty} \frac{|\Phi_t x -
  \Phi_{\sigma (t)}y|}{|\Phi_{\sigma(t)} y|} \ge \delta \qquad \forall \sigma \in
\cR \, .
\end{equation}
Clearly $h(y) \in B_{\varepsilon} \bigl(
h(x)\bigr)\setminus \{0\}$ because $h(0)=0$. Given any $\rho\in \cR$, 
note that $ \sigma:= \tau_y^{-1} \circ \rho \circ \tau_x \in \cR$. With this,
\begin{align*}
& \limsup\nolimits_{t\to \infty}  \frac {|\Psi_t h(x) -
  \Psi_{\rho(t)}h(y)|}{|\Psi_{\rho(t)} h(y)|} = \limsup\nolimits_{t\to
                                      \infty}
                                     \frac{|h(\Phi_{\tau_x^{-1}(t)}x)
                                     - h(\Phi_{\tau_y^{-1}\circ \rho
                                     (t)}y)|}{|h(\Phi_{\tau_y^{-1}\circ
                 \rho(t)}y)|}
  \ge \\
&   \qquad \ge  \limsup\nolimits_{t \to   \infty}
                                     \frac{|\Phi_{\tau_x^{-1}(t)}x -
           \Phi_{\sigma \circ \tau_x^{-1} (t)}y|/\kappa
           }{| \Phi_{\sigma \circ \tau_x^{-1} (t)}y |\kappa }
                                     = \frac{1}{\kappa^2}  \limsup\nolimits_{t\to  \infty}
                                     \frac{|\Phi_t x -
                                     \Phi_{\sigma (t)}y|}{|\Phi_{\sigma(t)}y|}  \ge \frac{\delta}{\kappa^2}\, ;                                  
\end{align*}
here the first $\ge$ is due to $h,h^{-1}$ being Lipschitz
near $0$, whereas the last $\ge$ is due to (\ref{eq31}). 
Since $\varepsilon >0$ and $\rho \in \cR$ have been arbitrary, $h(x)$
is $\delta/\kappa^2$-distorting for $\Psi$. Thus $h(D^{\Phi})\subset D^{\Psi}$, and interchanging the
roles of $\Phi$, $\Psi$ yields $h(D^{\Phi})= D^{\Psi}$.
\end{proof}

\begin{rem}\label{rem3a}
For the flows $\Phi$, $\Psi$ on $\R^2$ generated by
$J_2(-1)$, $-I_2$ respectively, it turns out that $D^{\Phi} = \mbox{\rm span}\,
\{e_1\}\ne \{0\} = D^{\Psi}$ even though $\Phi
\stackrel{1^-}{\thicksim} \Psi$; see Theorem \ref{lem5bb} below. This
shows that the conclusion of Lemma \ref{lem3_1} 
may fail if merely $h\in \cH_{1^-}(X)$.
\end{rem}

The remainder of this section develops an explicit description of
$D^{\Phi}$ for every stable flow $\Phi$ on $X$. As it turns out,
$D^{\Phi}$ always is a $\Phi$-invariant proper subspace of $X$. To explicitly describe
this subspace, recall from Section \ref{sec2a} the Lyapunov spaces
$L^{\Phi}$ associated with any linear flow $\Phi$ on $X$, for which
$$
L^{\Phi}(s^-) = \left\{
x\in X : \lim\nolimits_{t\to \infty} \frac{\Phi_t
  x}{e^{st}} = 0   \right\} \qquad \forall s\in \R \, . 
$$
As a convenient refinement of $L^{\Phi}$, for every $m\in \N_0$ let
$$
L_m^{\Phi}(s) = \left\{
x\in X : \lim\nolimits_{t\to \infty} \frac{\Phi_t
  x}{e^{s t} t^m } = 0 
  \right\} \qquad \forall s\in \R \, . 
$$
The sets $L_m^{\Phi}(s)$, henceforth referred to
informally as the {\bf refined Lyapunov spaces} of $\Phi$ at $s$, are readily
seen to have the following basic properties. 

\begin{prop}\label{prop3aoo1}
Let $\Phi$ be a linear flow on $X$ and $m,n\in \N_0$, $s,t\in \R$. Then:
\begin{enumerate}
\item $L_m^{\Phi}(s)$ is a $\Phi$-invariant subspace of $X$;
  \item if $m\le n$ and $s\le t$, then $L_m^{\Phi} (s) \subset
    L_{n}^{\Phi} (t)$;
  \item $L^{\Phi}(s^-)= L_0^{\Phi}(s) \subset \ldots \subset
    L_m^{\Phi}(s) \subset L_{d}^{\Phi} (s) = L^{\Phi}(s)$;
    \item if $m\le n$ and $L_m^{\Phi} (s) = L_{m+1}^{\Phi}(s)$, then $L_m^{\Phi}(s)
      =L_{n}^{\Phi}(s)$.
    \end{enumerate}
    Moreover, $\Phi$ is diagonal if and only if $L^{\Phi}_1(s) =
    L^{\Phi}(s)$ for all $s\in \R$.
\end{prop}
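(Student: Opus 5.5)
The plan is to reduce everything to an explicit computation in real Jordan coordinates. Pick an invertible $P$ so that $A^{\Phi}$ has the form (\ref{eq3_1b}). Since $L_m^{\Phi}(s)$ is defined by a limit condition on $\Phi_t x$, and $P$ is a bounded invertible linear map, $x\in L_m^{\Phi}(s)$ if and only if $Px$ satisfies the same condition for the block-diagonal flow. The condition also splits across blocks, because the norm of a vector is comparable to the maximum of the norms of its block components. So it suffices to describe $L_m^{\Phi}(s)$ for a single block $e^{tJ_k(a+ib)}$ and then take direct sums.

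For a single block, use (\ref{eq2z1}) together with the fact that $R_k(bt)$ is an isometry:
$$
|e^{tJ_k(a+ib)}x| = e^{at}\Bigl|\sum\nolimits_{j=0}^{k-1}\frac{t^j}{j!}K_k(b)^j x\Bigr| .
$$
Write $x$ in block coordinates $x=(x^{(1)},\ldots,x^{(k)})$, each $x^{(i)}\in\R^{{\sf d}(b)}$. Let $p(x)=\max\{i: x^{(i)}\ne 0\}$. Then the leading term of the sum grows exactly like $t^{p(x)-1}|x^{(p(x))}|/(p(x)-1)!$. Hence, for this block, $x\in L_m(s)$ if and only if one of the following holds:
\begin{itemize}
\item $x=0$;
\item $a<s$;
\item $a=s$ and $p(x)-1<m$, i.e.\ $x\in\mbox{\rm span}$ of the first $m$ coordinate blocks.
\end{itemize}
Each of these sets is a subspace invariant under $J_k(a+ib)$, since $K_k(b)$ shifts coordinates downward. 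Summing over blocks therefore gives (i); invariance can also be read off directly, because $\Phi_\tau$ commutes with $\Phi_t$ and is bounded.

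Items (ii) and (iii) follow immediately from the definition or from this description:
\begin{itemize}
\item For (ii), $e^{st}t^m\le e^{t't}t^n$ for $t\ge1$ whenever $m\le n$ and $s\le t'$.
\item For (iii), $L_0^{\Phi}(s)=L^{\Phi}(s^-)$ is the displayed characterization. Moreover $L_d^{\Phi}(s)$ contains every block with $a\le s$, since $p(x)-1\le k-1<d$, so it equals $L^{\Phi}(s)$.
\end{itemize}

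For (iv), the per-block description shows that
$$
\dim L_{m+1}(s)-\dim L_m(s)={\sf d}\cdot\#\{\text{blocks with }a=s,\ k\ge m+1\}.
$$
This count is non-increasing in $m$. So once it vanishes at $m$, it vanishes for all larger indices, and equality propagates to all $n\ge m$. This is the step needing the most care, and it hinges entirely on the explicit block formula.

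Finally, consider the diagonal criterion. The block description gives $L_1(s)=L(s)$ exactly when every block with $a=s$ has $k\le 1$. Requiring this for all $s$ means every block has size $1$, which is precisely diagonalizability of $\Phi$.
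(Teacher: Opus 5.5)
Your proposal is correct. The paper gives no proof of this proposition; it only says the properties are ``readily seen''. Your reduction to real Jordan blocks via (\ref{eq2z1}) is exactly the computation the paper uses implicitly elsewhere: on a block with real part $s$, $L_m^{\Phi}(s)$ is the span of the first $m$ coordinate blocks, i.e.\ $\ker K^m$, as in the $J_d(-1)$ example and in the identification $L_{m-1}^{\Phi}(-1)=\ker K^{m-1}$ in the proof of Theorem \ref{lem5bb}.

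The only fix is cosmetic, in (iv): since ${\sf d}$ varies from block to block, write the dimension jump as $\sum {\sf d}(b_j)$ over the blocks with ${\sf Re}\,z_j=s$ and size $m_j\ge m+1$.
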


\noindent
By Proposition \ref{prop3aoo1} the family $L_m^{\Phi}$ does indeed
refine $L^{\Phi}$: With
$$
m^{\Phi}(s) := \min\bigl\{m\in \N_0 : L_m^{\Phi}(s) = L^{\Phi}(s) \bigr\}
\qquad \forall s\in \R \, , 
$$
notice that $m^{\Phi}(s)\ne 0$ precisely if $s$ is a Lyapunov exponent of
$\Phi$; if it is, then the $m^{\Phi}(s) + 1$ subspaces   
$L^{\Phi} (s^-)=L_0^{\Phi}(s)\subset \ldots \subset
L_{m^{\Phi}(s)}^{\Phi}(s)= L^{\Phi}(s)$ all are different, and the multiplicity of $s$ is at least $m^{\Phi}(s)$.
For a simple concrete example,
consider the flow $\Phi$ on $\R^d$ generated by $J_d(-1)$. Then
$m^{\Phi}(s)$ equals $d$ or $0$ depending on whether $s=-1$ or $s\ne
-1$, and for
every $m\in \N_0$, $s\in \R$,
$$
L^{\Phi}(s) = \left\{
  \begin{array}{cl}
    \{0\} & \mbox{\rm if } s<-1 \, , \\
    \R^d & \mbox{\rm if } s\ge -1 \, ,
    \end{array}
  \right.
  \quad
  L_m^{\Phi}(s) = \left\{
  \begin{array}{ll}
    \{0\} & \mbox{\rm if } s<-1 \, , \\
    \mbox{\rm span}\, \{e_1, \ldots , e_{\min\{m, d\}} \} &
                                                              \mbox{\rm
                                                              if } s =
                                                              -1 \, ,
    \\
    \R^d & \mbox{\rm if } s> -1 \, ;
    \end{array}
  \right.
  $$
  here and throughout, $\mbox{\rm span}\, \varnothing := \{0\}$ as usual.
In general, given any linear flow $\Phi$ on $X$, denote by
$\lambda^{\Phi}$ the largest Lyapunov exponent of $\Phi$, i.e.,
\begin{equation}\label{eq3yy1}
\lambda^{\Phi} = \max\nolimits_{x\in X \setminus \{0\}}
\lambda^{\Phi}_+ (x) = \max\nolimits_{\lambda \in \sigma (\Phi)} {\sf
  Re}\,  \lambda = \min \{s\in \R : L^{\Phi}(s) =  X\} \, ,
\end{equation}
and for convenience let
\begin{equation}\label{eq3yy2}
  m^{\Phi} = m^{\Phi} (\lambda^{\Phi}) = \min  \{m\in \N_0 :
  L_{m}^{\Phi}(\lambda^{\Phi}) = X\} \, .
\end{equation}
For the flow $\Phi$ on $\R^d$ generated by $J_d(-1)$ in particular,
simply $\lambda^{\Phi}=-1$ and $m^{\Phi} = d$. In general, 
$m^{\Phi}$ is the largest size of any Jordan block associated with an
eigenvalue of maximal real part. Thus $m^{\Phi}$ equals the largest dimension of any
irreducible component with eigenvalue $\lambda^{\Phi}$, or one-half the largest dimension
of any irreducible component with non-real eigenvalues on
$\lambda^{\Phi}+i \R$, or both.

Given any $x\in X$, as $t\to \infty$ notice that $ |\Phi_t x|$
does not decay more slowly (if $\lambda^{\Phi}<0$) or does not grow
faster (if $\lambda^{\Phi}>0$) than $ e^{\lambda^{\Phi}t} t^{m^{\Phi} - 1}$.
In fact, the latter expression yields {\em
  precisely\/} the slowest possible rate of decay or fastest possible
rate of growth, as the subspace
$L_{m^{\Phi}-1}^{\Phi} (\lambda^{\Phi})$ always is proper whereas by
(\ref{eq3yy2}) clearly $L_{m^{\Phi}}^{\Phi} (\lambda^{\Phi}) = X$.
While (refined) Lyapunov spaces are useful in the study of 
linear flows in general \cite{BaPe, CK}, they
are especially relevant for {\em stable\/} flows, not least because
$L_{m^{\Phi}-1}^{\Phi} (\lambda^{\Phi}) $, the largest non-trivial
space among all refined Lyapunov spaces, holds particular
significance for such flows.

\begin{theorem}\label{lem5bb}
  Let $\Phi$ be a stable flow on $X$. Then $D^{\Phi} = L_{m^{\Phi}-1}^{\Phi}
  (\lambda^{\Phi})$ with $m^{\Phi}$, $\lambda^{\Phi}$ given
  by {\rm (\ref{eq3yy2})}, {\rm 
  (\ref{eq3yy1})} respectively.
\end{theorem}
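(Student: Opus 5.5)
Throughout, write $\lambda=\lambda^{\Phi}<0$, $m=m^{\Phi}$ and $V=L_{m-1}^{\Phi}(\lambda)$. I would work in a basis in which $A^{\Phi}=\mbox{\rm diag}\,[J_{m_1}(z_1),\ldots,J_{m_k}(z_k)]$, since all notions involved are invariant under linear changes of coordinates. Call a block \emph{top} if ${\sf Re}\,z_j=\lambda$ and $m_j=m$. By (\ref{eq2z1}), on a top block
$$
e^{tJ_m(z_j)}u=e^{\lambda t}R_m(b_jt)\Bigl(\tfrac{t^{m-1}}{(m-1)!}K_m(b_j)^{m-1}u+O(t^{m-2})|u|\Bigr),
$$
while every other block contributes only $O(e^{\lambda t}t^{m-2})$, or decays strictly faster. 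Hence $x\in V$ exactly when $K_m(b_j)^{m-1}x_j=0$ for every top block $j$. Let $Q$ denote the linear map $x\mapsto (K_m(b_j)^{m-1}x_j)_{j\ {\rm top}}$. Since each $R_m(b_jt)$ is an isometry, for $x\notin V$ this gives the two-sided estimate
$$
c|Qx|\,e^{\lambda t}t^{m-1}\le|\Phi_tx|\le C|x|\,e^{\lambda t}t^{m-1}\qquad\mbox{for all large } t .
$$
The plan is to prove the two inclusions separately.

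\emph{Step 1: $X\setminus V$ contains no distortion points.} Fix $x\notin V$ and $\delta>0$, and take $\rho(t)=t$. For $y$ close to $x$ we have $|Qy|\ge|Qx|/2$, so $|\Phi_ty|\ge c'e^{\lambda t}t^{m-1}$ uniformly in such $y$. On the other hand $|\Phi_t(x-y)|\le Ce^{\lambda t}t^{m-1}|x-y|$. Therefore
$$
\limsup\nolimits_{t\to\infty}\frac{|\Phi_t x-\Phi_t y|}{|\Phi_t y|}\le \frac{C}{c'}\,|x-y|,
$$
which is $<\delta$ once $\varepsilon$ is small enough. Every $y\in B_{\varepsilon}(x)\setminus\{0\}$ therefore fails the defining inequality for the single choice $\rho={\rm id}$. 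So $x$ is not $\delta$-distorting for any $\delta>0$, and $D^{\Phi}\subset V$.

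\emph{Step 2: every point of $V$ is a distortion point.} Fix $x\in V$ and $\varepsilon>0$. Choose $y=x+\eta v$ with $\eta$ small, where $v$ is supported in one top block and has $Qv\neq 0$. Then $y\notin V$, and:
\begin{itemize}
\item $|\Phi_sy|\asymp e^{\lambda s}s^{m-1}$;
\item the leading direction of $\Phi_sy$ is $R(b s)K^{m-1}v$, which lies in the span of the first basis vector(s) of the top chain;
\item by contrast, $|Q'\Phi_tx|=O(e^{\lambda t}t^{m-2})$ for the corresponding coordinate projection $Q'$, because $x\in V$.
\end{itemize}
Suppose the ratio $r(t)=|\Phi_tx-\Phi_{\rho(t)}y|/|\Phi_{\rho(t)}y|$ were eventually $<\delta$. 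Then $|\Phi_tx|$ and $|\Phi_{\rho(t)}y|$ would agree up to a factor $1\pm\delta$. Also, comparing the $Q'$- and complementary components of $\Phi_tx$ and $\Phi_{\rho(t)}y$ would force the non-leading coordinates of $\Phi_{\rho(t)}y$ to be matched by $\Phi_tx$ within relative error $\delta$. I would exploit that, along the orbit of $y$, the second coordinate of the top chain carries relative weight of order $1/\rho$. Matching magnitudes also forces $\rho(t)$ to lag behind $t$ by roughly $(m-1)\log t$. The aim is to choose $v$, for instance with an additional component in a lower chain coordinate or in a different block, so that the two projections cannot be matched simultaneously by a single reparametrization. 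This would contradict $r<\delta$ for some fixed $\delta>0$ depending only on $x$ and the chosen direction of $v$.

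\emph{Main obstacle.} Step 2 is where I expect the real difficulty. The naive choice of $y$ appears to fail. For $J_2(-1)$ and $x=e_1$, taking $y=e_1+\eta e_2$ gives a relative discrepancy of only about $\eta/(1+\rho\eta)\to 0$, once $\rho$ is chosen to match magnitudes. So the perturbation must be chosen more cleverly, or the limsup has to be extracted from a quantity that does not decay with $\rho$, such as the angular component coming from $R_m(b\rho)$ when $b\neq0$, or from another block. I would first test the claim carefully on $J_2(-1)$ and $J_3(-1)$ against the exact definition of $\delta$-distorting, to locate the non-vanishing invariant, before attempting the general Jordan-form computation.
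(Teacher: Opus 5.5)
\textbf{Verdict: there is a genuine gap.} Your Step~1 is correct, but Step~2, which carries the substance of the theorem, is never carried out.

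Step~1 is essentially the paper's Step~I. With $\rho=\mathrm{id}$, the upper bound $|\Phi_t(x-y)|\le Ce^{\lambda t}t^{m-1}|x-y|$, compared with the lower bound $|\Phi_ty|\ge c'e^{\lambda t}t^{m-1}$ that follows from $|Qy|\ge\frac12|Qx|$, shows that no $x\notin V$ is distorting.

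For Step~2, i.e.\ $V\subset D^{\Phi}$, you only give a plan. The mechanism you suggest (matching magnitudes, relative weight of lower chain coordinates, extra components in other blocks) is not the one that works. Your doubt about the naive choice comes from testing only one sign. For $J_2(-1)$ and $x=e_1$, take $y=e_1-\eta e_2$ with $\eta>0$. Then $\Phi_sy=e^{-s}\bigl((1-s\eta)e_1-\eta e_2\bigr)$. Its first coordinate eventually has the opposite sign to that of $\Phi_tx=e^{-t}e_1$ and carries almost all of $|\Phi_sy|$. Hence $|\Phi_tx-\Phi_{\rho(t)}y|\ge e^{-\rho(t)}(\rho(t)\eta-1)$, and the ratio tends to $1$ for \emph{every} $\rho\in\mathcal{R}$. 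The invariant you are looking for is the \emph{direction} of the top-order term. As you noticed, sizes can always be matched by a lag $\rho(t)\approx t-(m-1-k)\log t$, but antiparallel leading vectors cannot cancel.

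In general, the paper's Step~II fixes one top block, with orthogonal projection $P_1$, and distinguishes two cases.
\begin{itemize}
\item If $P_1x=0$, take $y=x+\eta e_m$, where $e_m$ is the top vector of that block's Jordan chain. Then $P_1\Phi_tx\equiv 0$, while $|P_1\Phi_sy|$ is comparable to $|\Phi_sy|$. So the ratio is eventually $\ge\frac12$ for every $\rho$.
\item If $P_1x\neq0$, let $k\le m-2$ be maximal with $P_1K^kx\neq 0$. Then $P_1K^kx\in\ker K$, and the block-$1$ part of $\Phi_tx$ is $\frac{e^{\lambda t}t^k}{k!}R_m(b_1t)P_1K^kx$ up to lower order. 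Choose $y=x-\eta w$ with $w$ in block~$1$ such that $K^{m-1}w$ is a positive multiple of $P_1K^kx$. The leading term of $\Phi_sy$ is then $-\eta\frac{e^{\lambda s}s^{m-1}}{(m-1)!}R_m(b_1s)K^{m-1}w$.
\end{itemize}
Because of the rotations, the second case needs a further split on $\rho$.
\begin{itemize}
\item If $\rho(t)-t$ is unbounded, the intermediate value theorem gives $t_n\to\infty$ with $b_1(\rho(t_n)-t_n)\in 2\pi\mathbb{Z}$. At these times the two leading vectors are exactly antiparallel, and the ratio is $\ge\frac12$.
\item If $\rho(t)-t$ is bounded, then $e^{\lambda t}t^k=o\bigl(e^{\lambda\rho(t)}\rho(t)^{m-1}\bigr)$ because $k\le m-2$. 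So $P_1\Phi_tx$ is negligible, and the ratio is again eventually $\ge\frac12$.
\end{itemize}
Your Step~2 needs both ingredients: the antiparallel choice of $y$, and the bounded/unbounded dichotomy for $\rho(t)-t$. Without them it does not establish $V\subset D^{\Phi}$.
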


\begin{proof}
Let $X_1, \ldots , X_{\ell_0}$ with $\ell_0\in \N$ be $\Phi$-invariant
subspaces of $X=\R^d$ so that $X = \bigoplus_{\ell =1}^{\ell_0}
X_{\ell}$ and each flow 
$\Phi_{[\ell ]}:= \Phi|_{\R \times X_{\ell}}$ is irreducible.  
Assume w.l.o.g.\ that $\lambda^{\Phi}=-1$, and write $m^{\Phi}$
simply as $m\in \N$. Using an appropriate linear change of
coordinates, it can be assumed that $\Phi$ is generated by
\begin{equation}\label{eqlem51}
A = \mbox{\rm diag}\, [J_{m_1}(-1+ib_1), \ldots , J_{m_k}(-1 +ib_k), A_0] \, ,
\end{equation}
with the appropriate $k \in \{1, \ldots , \ell_0\}$, $m_1, \ldots , m_k \in \N$ with $m=m_1 \ge
\ldots \ge m_k$, and $b\in \R^{k}$; additionally, $k_0\in \N_0$, and $A_0\in \R^{k_0 \times k_0}$
is present in (\ref{eqlem51}) only if $\lambda_+^{\Phi} (x) < - 1$ for some
$x\ne 0$, and in this case $\lim_{t\to \infty}e^t|e^{tA_0}|= 0$. Thus
$d=\sum_{j=1}^k m_j\,  {\sf d}(b_j) + k_0$ and, for instance, $X_1= \mbox{\rm span}\,
\{e_1, \ldots , e_{m_1\,  {\sf d}(b_1)}\}$ with
$\Phi_{[1]}$ generated by $J_{m_1}(-1+ib_1)$. For convenience, for every
$\ell \in \{1, \ldots , k \}$ denote
by $P_{\ell}$ the orthogonal projection of $X$ onto $X_{\ell}$, and by $P:=
\sum_{\ell=1}^k P_{\ell}$ the orthogonal projection of $X$ onto $\bigoplus_{\ell=1}^k
X_{\ell}$, the latter space being the sum of all generalized eigenspaces of
$A$ associated with any eigenvalue on $-1 + i\R$. Deduce from (\ref{eq2z1})
that 
\begin{equation}\label{eqlem52}
\Phi_t = e^{tA} = e^{-t} R(t)  \left( P + 
\sum\nolimits_{j=1}^{m -1} \frac{t^j}{j!} K^j + e^t \, \mbox{\rm diag}\,
\bigl[O_{d-k_0}, e^{tA_0}  \bigr]   \right) \qquad
\forall t \in \R \, ,
\end{equation}
where $R(t), K\in \R^{d\times d}$ are given by
$$
R(t) =  \left\{ \begin{array}{ll} I_d & \mbox{\rm if $t=0$}, \\
  \mbox{\rm diag}\, \bigl[R_{m_1} (b_1t), \ldots ,
                  R_{m_k} (b_{k}t), I_{k_0} \bigr] & \mbox{\rm if
                                                     $t\ne 0$},
                \end{array} \right. \quad
              K = \mbox{\rm diag}\, \bigl[K_{m_1}(b_1), \ldots , K_{m_k}(b_k), O_{k_0}\bigr] 
$$
respectively, with an ``empty'' sum understood to equal
$0$ as usual. Note that $R(t)$ is an isometry, and $|K^j|=1$ for all $j\in \{0, \ldots , m-1\}$, whereas $K^m =
O_{d}$. Clearly $R(t) P_{\ell}= P_{\ell}R(t)$ and $K P_{\ell}= P_{\ell} K$ for every
$\ell \in \{1, \ldots k \}$. For convenience, let
$$
E = L_{m-1}^{\Phi} (-1) = \left\{
  \begin{array}{ll}
    \ker P & \mbox{\rm if } m = 1 \, , \\
    \ker K^{m-1} & \mbox{\rm if } m\ge 2 \, ,
    \end{array}
  \right.
$$
so the theorem simply asserts that $E=D^{\Phi}$. This will now be
proved in two separate steps.

\medskip

\noindent
\underline{Step I:} Proof of $E\supset D^{\Phi}$.

\smallskip

\noindent
Pick any $x\not \in E$ and $\delta>0$.
On the one hand, if $m=1$
then $x\not \in E$ means that $Px\ne 0$. In this case, choose
$0<\varepsilon < \frac12 \delta |Px|$ so small that $|Py|>\frac12
|Px|$ for all $y\in B_{\varepsilon}(x)$. By (\ref{eqlem52}),
$$
|\Phi_t z| = e^{-t} \big|Pz + e^t \, \mbox{\rm diag}\, \bigl[
O_{d-k_0}, e^{tA_0}\bigr] z \big|   \qquad \forall t \in \R , z \in X  \, .
$$
Since $\lim_{t\to \infty} e^t |e^{tA_0}|=0$, it follows
that
\begin{equation}\label{eq4_N1}
\lim \nolimits_{t\to \infty} \frac{|\Phi_t x - \Phi_t y|}{|\Phi_t
  y|} = \frac{|Px - Py|}{|Py|} < \frac{2|x-y|}{|Px|}< \delta \qquad \forall y\in
B_{\varepsilon} (x) \, .
\end{equation}
On the other hand, if $m\ge 2$ then $x \not \in E$ means that $K^{m-1}x\ne
0$. In this case, choose $0<\varepsilon < \frac12
\delta |K^{m-1}x|$ so small that $|K^{m-1}y|>\frac12 |K^{m-1}x|$ for
all $y\in B_{\varepsilon}(x)$. Also, choose $T_1>0$ so large that
$$
\sum\nolimits_{j=0}^{m-2} \frac{t^{j-m+1}}{j!}  +\frac{e^t}{t^{m-1}}
|e^{tA_0}| \le
\frac{m}{(m-1)! t} \qquad \forall t \ge T_1 \, .
$$
Deduce from (\ref{eqlem52}) that for every $t \ge T_1$ and $z\in X$,
\begin{align*}
  |\Phi_t z|  & \ge e^{-t} t^{m-1} \left(
\frac{|K^{m-1} z|}{(m-1)!} - \left(  \sum\nolimits_{j=0}^{m-2}
               \frac{t^{j-m+1}}{j!} + \frac{e^t}{t^{m-1}}|e^{tA_0}|
                         \right)  |z|
                            \right) \\
  & \ge e^{-t} \frac{t^{m-1}}{(m-1)!} \left(
|K^{m-1} z| - \frac{m|z|}{t} \right) ,
\end{align*}
but also
$$
|\Phi_t z |  \le e^{-t} t^{m-1}  \left(\frac1{(m-1)!} +  \sum\nolimits_{j=0}^{m-2}
               \frac{t^{j-m+1}}{j!}    + \frac{e^t}{t^{m-1}}|e^{tA_0}|\right) |z| 
   \le  e^{-t} \frac{t^{m-1}}{(m-1)!} \left(
1 + \frac{m}{t} \right)|z| \, .
$$
With this, and in analogy to (\ref{eq4_N1}),
$$
\limsup\nolimits_{t\to \infty} \frac{|\Phi_t x - \Phi_t y|}{|\Phi_t
  y|} \le \lim\nolimits_{t\to \infty} \frac{|x-y|
  (1+m/t)}{|K^{m-1}y| - m|y|/t} =
\frac{|x-y|}{|K^{m-1}y|} < \frac{2 \varepsilon }{|K^{m-1} x|} < \delta \qquad \forall y \in B_{\varepsilon} (x) \, .
$$
Regardless of whether $m=1$ or $m\ge 2$, therefore, $x$ is {\em not\/} $\delta$-distorting for
$\Phi$. Since $\delta>0$ has been arbitrary, $x\not \in D^{\Phi}$, and
since $x\not \in E$ has been arbitrary as well, $E\supset D^{\Phi} $. 

\medskip

\noindent
\underline{Step II:} Proof of $E\subset D^{\Phi}$.

\smallskip

\noindent
Pick any $x\in E$, $0<\varepsilon < 1$, and $\rho \in \cR$.

Assume first that $P_1 x = 0$. In this case, let $y = x + \frac12 \varepsilon e_m \in B_{\varepsilon}
(x)\setminus \{0 \}$. Observe that $P_1\Phi_t x = 0$ for all $t\in \R$, whereas
$P_1y=\frac12 \varepsilon e_m \ne  0$ and
$$
\left|
|P_1\Phi_s y| - \frac{e^{-s} s^{m - 1} \varepsilon}{2(m-1)!}
  \right| \le \frac{e^{-s} \varepsilon}{2} \sum\nolimits_{j=0}^{m -2}
  \frac{s^j}{j!} \qquad \forall s\in \R^+ \, . 
$$
Choose $T_2 >0$ so large that $\rho(t)>0$ for
all $t\ge T_2$, but also
$$
\frac{e^{-\rho(t)} \rho(t)^{m - 1}\varepsilon}{3(m - 1)!} \le 
|P_1\Phi_{\rho(t)} y| \le |\Phi_{\rho(t)} y | \le 
\frac{ 2 e^{-\rho(t)}\rho(t)^{m- 1}\varepsilon}{3(m - 1)!} \qquad \forall t
\ge  T_2 \, ;
$$
notice that $T_2$ may depend on $x$, $\varepsilon$, and $\rho$. It follows that
$$
\frac{|\Phi_t x - \Phi_{\rho(t)}y|}{|\Phi_{\rho(t)}y|} \ge
\frac{|P_1 \Phi_t x - P_1 \Phi_{\rho(t)}y|}{|\Phi_{\rho(t)}y|} =
\frac{|P_1\Phi_{\rho(t)}y|}{|\Phi_{\rho(t)}y|} \ge
\frac{\varepsilon/3}{2\varepsilon/3 } = \frac12  \qquad \forall t \ge  T_2 \, ,
$$
and hence clearly
\begin{equation}\label{eq4_6A}
\limsup\nolimits_{t\to \infty} \frac{|\Phi_t x -
  \Phi_{\rho(t)}y|}{|\Phi_{\rho(t)}y|} \ge \frac12 \, .
\end{equation}

It remains to consider the case of $P_1x\ne 0$. In this
case (which can occur only if $m\ge 2$) let $k = \max \{0\le j \le m-2: P_1K^j x
\ne 0\}$. Then $P_1K^k x \in X_1 \cap \ker K$, so $P_1K^k x = \nu
|P_1K^k x| R(\vartheta) e_1$ with an appropriate $\nu \in \{-1,1\}$
and $\vartheta \in \R^+$. With this, let
$$
y = x - {\textstyle \frac12} \varepsilon \nu R(\vartheta) e_m \in
B_{\varepsilon} (x) \setminus \{0\} \, .
$$
Note that $K^j e_m = e_{m-j}$ for every $j\in \{0, \ldots , m-1\}$,
and hence $K^{m-1} y = P_1K^{m-1} y = -\frac12 \varepsilon \nu
R(\vartheta) e_1$. Deduce from (\ref{eqlem52}) that
\begin{equation}\label{eq3_4_new}
\left|
|P_1 \Phi_t x| - \frac{e^{-t} t^k}{k!} |P_1K^k x|
\right| \le \left|
P_1 \Phi_t x - \frac{e^{-t}t^k}{k!} R(t) P_1K^k x
\right| \le e^{-t} |x| \sum\nolimits_{j=0}^{k-1} \frac{t^j}{j!} \qquad
\forall t \in \R^+ \, .
\end{equation}
Choose $T_3>0$ so large that
$\rho(t)>0$ for all $t\ge T_3$, but also
\begin{equation}\label{eq3_5_new}
|x| \sum\nolimits_{j=0}^{k-1} \frac{t^j}{j!} \le
\frac{t^k}{3k!}|P_1K^k x| \, , \quad
(|x|+1) \left(
\sum\nolimits_{j=0}^{m-2} \frac{\rho(t)^{j}}{j!} +
e^{\rho(t)}\big|e^{\rho(t) A_0}\big|
  \right)\le \frac{\rho(t)^{m-1} \varepsilon}{6(m-1)!} \qquad \forall
  t \ge T_3 \, ;
\end{equation}
similarly to $T_2$ above, $T_3$ may depend on $x$, $\varepsilon$, and
$\rho$. Deduce from (\ref{eq3_4_new}), (\ref{eq3_5_new}) that
$$
\left|
|P_1 \Phi_t x| - \frac{e^{-t}t^k}{k!} |P_1K^k x|
\right| \le \left|
P_1 \Phi_t x - \frac{e^{-t}t^k}{k!} R(t) P_1K^k x
  \right| \le \frac{ e^{-t}t^k}{3k!} |P_1 K^k x| \qquad \forall t \ge
  T_3 \, ,
$$
as well as 
$$
\left|
|P_1 \Phi_{\rho(t)}y | - \frac{e^{-\rho(t)}\rho(t)^{m-1}\varepsilon }{2(m-1)!}
\right| \le \left|
\Phi_{\rho(t)} y - \frac{e^{-\rho(t)}\rho(t)^{m-1}}{(m-1)!} R\bigl(
\rho(t)\bigr)  K^{m-1} y
  \right|\le \frac{ e^{-\rho(t)}\rho(t)^{m-1}\varepsilon}{6(m-1)!}
  \quad \: \: \forall t \ge T_3 \, ,
$$
and consequently
$$
\frac{e^{-\rho(t)}\rho(t)^{m-1}\varepsilon }{3(m-1)!} \le
|\Phi_{\rho(t)}y| \le \frac{2 e^{-\rho(t)}\rho(t)^{m-1}\varepsilon
}{3(m-1)!}  \qquad \forall t \ge T_3 \, .
$$
It follows that for every $t\ge T_3$,
\begin{align*}
  |P_1 \Phi_t x & - P_1 \Phi_{\rho(t)}y| \ge \\
                 & \ge \left|
\frac{e^{-t}t^k}{k!} R(t)P_1K^k x -
                                         \frac{e^{-\rho(t)}\rho(t)^{m-1}}{(m-1)!}
                                         R\bigl( \rho(t) \bigr)
                                         K^{m-1} y
                                         \right| - \frac{ e^{-t}
                                         t^k}{3k!} |P_1K^k x| -
                                         \frac{e^{-\rho(t)}\rho(t)^{m-1}\varepsilon}{6(m-1)!}
  \\
  & =  \left| 
\frac{e^{-t}t^k}{k!} |P_1K^k x| R(t) e_1 +
                                         \frac{e^{-\rho(t)}\rho(t)^{m-1}\varepsilon
    }{2 (m-1)!}
                                         R\bigl( \rho(t) \bigr)  e_1
                                         \right| - \frac{ e^{-t}
                                         t^k}{3k!} |P_1K^k x| -
                                         \frac{e^{-\rho(t)}\rho(t)^{m-1}\varepsilon}{6(m-1)!}
    \, .
\end{align*}
In order to show that (\ref{eq4_6A}) holds in this case as well, it is helpful to distinguish two
mutually exclusive possibilities: On the one hand, if $\sup_{t\ge T_3} |\rho(t) -
t|=\infty$ then
there exists an increasing sequence $(t_n)$ with $t_1\ge T_3$ and $t_n \to \infty$ so that
$$
b_1 (\rho(t_n) - t_n) \in 2\pi
\Z \qquad \forall n \in \N \, .
$$
(This condition trivially is satisfied for {\em every\/} sequence
$(t_n)$ if $b_1=0$.) With this, $R(t_n) e_1 = R\bigl( 
\rho(t_n) \bigr)e_1$ for every $n\in \N$, so
$$
|P_1 \Phi_{t_n} x - P_1 \Phi_{\rho(t_n)} y| \ge \left(
\frac{e^{-t_n}t_n^k}{k!} |P_1K^k x| + \frac{e^{-\rho(t_n)}\rho(t_n)^{m-1}\varepsilon}{2(m-1)!}
  \right) \left( 1 - \frac{1}{3}\right) \ge \frac{  |\Phi_{\rho(t_n)} y|}{2} \, ,
$$
and consequently
\begin{equation}\label{eqastx}
\limsup\nolimits_{t\to \infty} \frac{|\Phi_t x -
  \Phi_{\rho(t)}y|}{|\Phi_{\rho(t)}y|} \ge 
\limsup\nolimits_{t\to \infty} \frac{|P_1 \Phi_t x -
  P_1 \Phi_{\rho(t)}y|}{|\Phi_{\rho(t)}y|} \ge
\frac12 \, .
\end{equation}
On the other hand, if $\gamma := \sup_{t\ge T_3}|\rho(t) -
t|<\infty$ then for all $t\ge T_3$,
$$
|P_1 \Phi_t x - P_1 \Phi_{\rho(t)}y|  \ge
                                       \frac{e^{-\rho(t)}\rho(t)^{m-1}\varepsilon}{3(m-1)!}
                                       -
                                       \frac{4 e^{-t} t^k}{3
                                         k!}|P_1K^k x| \ge \frac{
                                         |\Phi_{\rho(t)}y| }{2} -
                                       \frac{4e^{-t}t^k}{3 k!} |P_1K^k
                                       x| \, ,
$$
and consequently
\begin{align*}
\limsup\nolimits_{t\to \infty} \frac{|\Phi_t x -
  \Phi_{\rho(t)}y|}{|\Phi_{\rho(t)}y|} & \ge \frac12 -
                                                                                  \frac{4|P_1K^k x|
                                         (m-1)!}{\varepsilon k!}
                                         \limsup\nolimits_{t\to \infty}
                                         \frac{e^{\rho(t)-t}t^k}{\rho(t)^{m-1}}
  \\
  & \ge \frac12  - \frac{4 |P_1K^k x|(m-1)!e^{\gamma}}{\varepsilon
    k!} \lim\nolimits_{t\to \infty} \frac{t^k}{(t-\gamma)^{m-1}} =
    \frac12 \, ,
\end{align*}
where the last equality is due to the fact that $k\le m-2$. 

In summary, (\ref{eq4_6A}) holds for every $x\in E$, $0<\varepsilon <
1$, $\rho \in \cR$, and with an appropriate $y\in
B_{\varepsilon}(x)\setminus \{0\}$. This means that every $x\in E$ is
$\frac12$-distorting for $\Phi$. Thus $E\subset D^{\Phi}$, and the proof is complete.
\end{proof}

The Lyapunov spaces $L^{\Phi}$ for stable flows, with their good
behaviour recorded in Proposition \ref{prop2bb}, are crucial
tools for establishing a H\"{o}lder classification of linear flows
\cite{BW2}. For the purpose of
the present article the refined Lyapunov spaces $L_m^{\Phi}$ are just as
crucial, not least due to the following result which significantly
strengthens Proposition \ref{prop2bb} in the Lipschitz context.

\begin{theorem}\label{lem3aol1}
Let $\Phi$, $\Psi$ be stable flows on $X$. Assume that $\Phi
\stackrel{h}{\thicksim}\Psi$ for some $h\in \cH_1(X)$. Then there
exists a unique $\alpha \in \R^+$ so that
\begin{equation}\label{eq4_6BA}
h\bigl( L_m^{\Phi} (\alpha s)\bigr) = L_m^{\Psi} (s) \qquad \forall m
\in \N_0 , \, s \in \R \, ,
\end{equation}
as well as
\begin{equation}\label{eq4_6BB}
\sup\nolimits_{t\ge 0} |\tau_x (t) - \alpha t| < \infty \qquad \forall
x \in X \setminus \{0\} \, .
\end{equation}
\end{theorem}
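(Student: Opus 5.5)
The plan is to take $\alpha$ from Proposition \ref{prop2bb}, which already gives $\Lambda^{\Phi}=\alpha\Lambda^{\Psi}$, $h(L^{\Phi}(\alpha s))=L^{\Psi}(s)$, and $\tau_x(t)/t\to\alpha$. Uniqueness of $\alpha$ is immediate: (\ref{eq4_6BB}) forces $\tau_x(t)/t\to\alpha$ for any single $x\ne 0$, which already fixes $\alpha$. Everything then hinges on upgrading these conclusions to the refined spaces and to bounded deviation. I would argue by induction down the Lyapunov filtration, using distortion points (Lemma \ref{lem3_1} together with Theorem \ref{lem5bb}) as the Lipschitz invariant that detects the refined spaces.

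The first step concerns the top level. By Theorem \ref{lem5bb} and Lemma \ref{lem3_1}, $h(L^{\Phi}_{m^{\Phi}-1}(\lambda^{\Phi}))=L^{\Psi}_{m^{\Psi}-1}(\lambda^{\Psi})$, where $\lambda^{\Phi}=\alpha\lambda^{\Psi}$.

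To peel off further layers, I would restrict to the invariant subspace $Y=L^{\Phi}_{m^{\Phi}-1}(\lambda^{\Phi})$. Since $h$ maps $\Phi$-orbits in $Y$ onto $\Psi$-orbits in $h(Y)$, it yields a Lipschitz equivalence of the stable flows $\Phi|_Y$ and $\Psi|_{h(Y)}$. This is legitimate: the paper's notions depend only on a neighborhood of $0$, and $h|_Y$ can be regarded as a bi-Lipschitz homeomorphism between subspaces. The top refined space of $\Phi|_Y$ is either $L^{\Phi}_{m^{\Phi}-2}(\lambda^{\Phi})$ (if $m^{\Phi}\ge 2$) or $L^{\Phi}(\lambda^{\Phi}{}^-)$, which is the top space at the next Lyapunov exponent. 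Iterating therefore exhausts the whole chain
$$L^{\Phi}_0(s)\subset\cdots\subset L^{\Phi}(s),\qquad s\ \text{ranging over the exponents},$$
and matches it with the corresponding chain for $\Psi$. Here I use that $L^{\Phi|_Y}_j(s)=L^{\Phi}_j(s)\cap Y$ for these nested spaces, and that exponents correspond via $\alpha$ by Proposition \ref{prop2bb}. Since $L_m^{\Phi}(s)$ is constant in $s$ between exponents, this gives (\ref{eq4_6BA}) for all $m,s$. One detail must be checked: the index $m$ is preserved, i.e.\ the chains have the same length at corresponding exponents. This follows because both chains are indexed by the same step counts. The distortion subspace at each stage is proper and strictly smaller until one drops to the next exponent, and that drop is detected by Proposition \ref{prop2bb} applied to the restricted flows.

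For (\ref{eq4_6BB}), fix $x\ne0$. By the first part I may restrict to the smallest refined space containing $x$, so that $|\Phi_t x|\asymp e^{\alpha s t}t^{j}$ for the appropriate $s,j$, exactly: the subtraction of a lower space reduces, via Theorem \ref{lem5bb}-type estimates, to $x\notin L^{\Phi}_{j-1}(\alpha s)$. Likewise $|\Psi_u h(x)|\asymp e^{s u}u^{j}$, because $h(x)$ lies in the corresponding space but not in the smaller one. Bi-Lipschitzness near $0$ gives $|h(\Phi_t x)|\asymp|\Phi_t x|$, hence
$$e^{s\tau_x(t)}\tau_x(t)^{j}\asymp e^{\alpha s t}t^{j}.$$
Taking logarithms and using $\tau_x(t)\sim\alpha t$ shows $|s|\,|\tau_x(t)-\alpha t|$ is bounded, with $s<0$ by stability.

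The main obstacle is the precise two-sided growth estimate $|\Phi_t x|\asymp e^{\lambda t}t^{j}$ for $x$ in $L_j\setminus L_{j-1}$. This requires choosing, for each $x$, the exact exponent and polynomial order of its slowest component, as in Step I of Theorem \ref{lem5bb}. Making this compatible with the induction on restricted flows also needs care.
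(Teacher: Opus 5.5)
Your proposal is correct and follows essentially the paper's proof. You take $\alpha$ from Proposition \ref{prop2bb} and iterate Theorem \ref{lem5bb} with Lemma \ref{lem3_1} on restricted flows; running one global flag instead of first restricting to each $L^{\Phi}(s)$ is harmless, since $h(L^{\Phi}(\alpha s))=L^{\Psi}(s)$ forces $m^{\Phi}(\alpha s)=m^{\Psi}(s)$ exactly as in the paper, and you finish with a two-sided growth comparison for $\tau_x$. The one real variation is that you absorb the polynomial factor via $\tau_x(t)/t\to\alpha$ from (\ref{eq3_4B}) and logarithms, which neatly replaces Proposition \ref{lem2a}; just fix the indexing so that $|\Phi_t x|\asymp e^{\alpha s t}t^{j}$ is asserted for $x\in L^{\Phi}_{j+1}(\alpha s)\setminus L^{\Phi}_{j}(\alpha s)$.
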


As presented below, the proof of (\ref{eq4_6BB}) makes use
of the following elementary calculus fact.

\begin{prop}\label{lem2a}
For every $m\in \N_0$ the function $f_m: \R\to\R^+$ given by $f_m(t)=
e^{-t} \max\{|t|, m\}^{m}$ is a continuous, decreasing bijection with
$$
\lim\nolimits_{|t|\to \infty} \Bigl( f_m^{-1} \bigl( r f_m(t) \bigr)  -
t \Bigr) = -\log r \qquad \forall r \in \R^+ \, .
$$
\end{prop}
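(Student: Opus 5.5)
For $m=0$ the statement is immediate: $f_0(t)=e^{-t}$, so $f_0^{-1}(u)=-\log u$. It follows that $f_0^{-1}\bigl(rf_0(t)\bigr)-t=-\log r$ holds exactly. So fix $m\in\N$. I would first establish that $f_m$ is a continuous decreasing bijection, and then derive the asymptotic formula from an explicit identity for $\log f_m$.

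\emph{Bijectivity.} I would split $\R$ into $(-\infty,-m]$, $[-m,m]$, $[m,\infty)$. On $[-m,m]$ one has $f_m(t)=m^me^{-t}$, which is strictly decreasing. On $(-\infty,-m]$ one has $f_m(t)=e^{-t}(-t)^m$, whose derivative is
$$
-e^{-t}(-t)^{m-1}\bigl((-t)+m\bigr)<0 \, .
$$
On $[m,\infty)$ one has $f_m(t)=e^{-t}t^m$, whose derivative is $e^{-t}t^{m-1}(m-t)$. This is negative for $t>m$, so $f_m$ is strictly decreasing there as well. The three formulas agree at $t=\pm m$, so $f_m$ is continuous and strictly decreasing on all of $\R$. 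Since $f_m(t)\to\infty$ as $t\to-\infty$ and $f_m(t)\to0$ as $t\to\infty$, the intermediate value theorem shows that $f_m$ maps $\R$ onto $\R^+$.

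\emph{Asymptotics.} Fix $r\in\R^+$ and let $s(t)=f_m^{-1}\bigl(rf_m(t)\bigr)$. As $t\to\infty$ we have $rf_m(t)\to0$, hence $s(t)\to\infty$. As $t\to-\infty$ we have $rf_m(t)\to\infty$, hence $s(t)\to-\infty$. Consequently, for $|t|$ large, both $|t|\ge m$ and $|s|\ge m$. In that regime the defining equation $f_m(s)=rf_m(t)$ becomes, after taking logarithms,
$$
s-t=-\log r+m\log\frac{|s|}{|t|} \, .
$$
So it suffices to show $|s|/|t|\to1$.

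To get the ratio, I would use $\log f_m(u)=-u+m\log|u|=-u\bigl(1+o(1)\bigr)$ as $|u|\to\infty$. Since $\log f_m(s)-\log f_m(t)=\log r$ is bounded while both terms diverge, dividing gives $\log f_m(s)/\log f_m(t)\to1$, and hence $s/t\to1$. Substituting this into the displayed identity yields $s-t\to-\log r$, which is the claim.

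The only mildly delicate point is this ratio argument. It goes through because the logarithmic term $m\log|u|$ is $o(|u|)$.
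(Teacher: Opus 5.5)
Your proof is correct and complete. The piecewise monotonicity check and the identity $s-t=-\log r+m\log(|s|/|t|)$ for large $|t|$ both hold, and so does the ratio argument for $s/t\to 1$, which uses $m\log|u|=o(|u|)$ and the fact that $s,t$ eventually have the same sign. The paper gives no proof: it calls the statement an elementary calculus fact used in the proof of Theorem~\ref{lem3aol1}, so your argument simply supplies the omitted verification, with the implicit convention $0^0=1$ ensuring $f_0(t)=e^{-t}$ also at $t=0$.
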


\begin{proof}[Proof of Theorem \ref{lem3aol1}]
Since $\Phi \stackrel{h}{\thicksim} \Psi$ with $h\in \cH_{1}(X)$, by
Proposition \ref{prop2bb} there exists a unique $\alpha \in \R^+$ so that $h
\bigl( L^{\Phi} (\alpha s) \bigr)= L^{\Psi} (s)$ for all $s\in
\R$. No generality is lost by assuming that $\alpha =1$. Then also $h
\bigl( L^{\Phi} (s^-) \bigr)= L^{\Psi} (s^-)$ for all $s\in
\R$. Given any $s\in \R$, assume first that $L^{\Phi}(s^-) =
L^{\Phi}(s)$. In this case, $L^{\Psi} (s^-) = L^{\Psi}(s)$ as well,
and from
$$
L^{\Psi}(s^-) = h
\bigl( L^{\Phi} ( s^-) \bigr) \subset h \bigl(
L_m^{\Phi}(s)\bigr) \subset  h \bigl(
L^{\Phi}(s)\bigr) = L^{\Psi} (s) = L^{\Psi} (s^-) \, ,
$$
it is clear that
$$
h\bigl( L_m^{\Phi} ( s)\bigr) = L^{\Psi}(s) =  L_m^{\Psi} (s) \qquad
\forall m \in \N_0 \, .
$$
By contrast, assume that $L^{\Phi} (s^-) \ne L^{\Phi} (s)$, and hence
also $L^{\Psi}(s^-)\ne L^{\Psi}(s)$. In this case, $m^{\Phi}(s) \ge 1$, and for every $j\in
\{0, \ldots , m^{\Phi}(s) \}$ denote by $\Phi_{[j]}$ the
restriction of $\Phi$ to $\R \times L_{m^{\Phi}(s)  - j}^{\Phi}(s)$. Similarly,
$m^{\Psi}(s) \ge 1$, and
$\Psi_{[j]}$ denotes the restriction of $\Psi$ to $\R \times L_{m^{\Psi}(s)
  - j}^{\Psi}(s)$. Thus $\Phi_{[0]}$ simply is $\Phi$ restricted to
$\R\times L^{\Phi}(s)$,
and $m^{\Phi_{[0]}} = m^{\Phi}(s)$. By Theorem \ref{lem5bb},
$D^{\Phi_{[0]}} = L_{m^{\Phi}(s)-1}^{\Phi}(s)$, and hence $\Phi_{[1]}$
is $\Phi$ restricted to $\R \times L_{m^{\Phi}(s)-1}^{\Phi}(s)$. Similarly,
$D^{\Psi_{[0]}} = L_{m^{\Psi}(s)-1}^{\Psi}(s)$, and $\Psi_{[1]}$ is
$\Psi$ restricted to $L_{m^{\Psi}(s)-1}^{\Psi}(s)$. Moreover, since
$\Phi_{[0]}\stackrel{h_0}{\thicksim} \Psi_{[0]}$ with $h_0 =
h|_{L^{\Phi}(s)}$, Lemma \ref{lem3_1} provides the crucial middle equality in
$$
h\bigl(L_{m^{\Phi}(s)-1}^{\Phi}(s) \bigr) = h (D^{\Phi_{[0]}}) =
D^{\Psi_{[0]}} = L_{m^{\Psi}(s)-1}^{\Psi} (s) \, .
$$
Repeating this argument shows that
\begin{equation}\label{eq3qq1}
h\bigl( L_{m^{\Phi}(s) - j}^{\Phi}(s)\bigr) = h(D^{\Phi_{[j]}}) =
D^{\Psi_{[j]}} = L_{m^{\Psi}(s)-j}^{\Psi} (s) \qquad \forall j \in
\bigl\{0 , \ldots , \min \{m^{\Phi}(s), m^{\Psi}(s)\}  \bigr\} \, .
\end{equation}
Now if $m^{\Phi}(s) < m^{\Psi}(s)$, then (\ref{eq3qq1}) with
$j=m^{\Phi}(s)$ would imply
$$
h \bigl( L^{\Phi}(s^-) \bigr) = h \bigl( L_0^{\Phi}(s)\bigr) =
L_{m^{\Psi}(s) - m^{\Phi}(s) }^{\Psi} (s) \ne L^{\Psi}(s^-) =h
\bigl( L^{\Phi}(s^-) \bigr)  \, ,
$$
an obvious contradiction. Similarly, $m^{\Phi}(s)> m^{\Psi}(s)$ is
impossible, so it follows that $m^{\Phi}(s)= m^{\Psi}(s)$. Hence (\ref{eq3qq1})
really reads
$$
h \bigl( L_m^{\Phi}(s) \bigr) = L_m^{\Psi}(s) \qquad \forall m \in
\{0,\ldots, m^{\Phi}(s) \} \, ; 
$$
since this equality automatically is correct also whenever $m> m^{\Phi}(s)$, the
proof of (\ref{eq4_6BA}) is complete.

It remains to prove (\ref{eq4_6BB}) with $\alpha = 1$. To this
end, let $s_1 < \ldots < s_k$ be the $k\le d$
{\em distinct\/} Lyapunov exponents of $\Phi$, that is, $k = \# \{{\sf
  Re}\, \lambda : \lambda \in \sigma (\Phi)\}$. Then $X\setminus \{0\}$
equals the disjoint union $\bigcup_{j=1}^k L^{\Phi}(s_j)\setminus
L^{\Phi}(s_j^-)$, where each set itself is a disjoint union, namely
$$
L^{\Phi}(s_j)\setminus L^{\Phi}(s_j^-) =
\bigcup\nolimits_{m=0}^{m^{\Phi}(s_j)-1} L_{m+1}^{\Phi}(s_j)\setminus
L_{m}^{\Phi} (s_j)\qquad \forall j\in \{1, \ldots , k\} \, .
$$
Thus, given any $x\in X\setminus \{0\}$, there exist unique integers
$j\in \{1, \ldots , k\}$ and $m\in \{0, \ldots, m^{\Phi}(s_j)-1\}$
so that $x\in L_{m+1}^{\Phi}(s_j)\setminus L_{m}^{\Phi} (s_j)$, and by
(\ref{eq4_6BA}) also $h(x) \in  L_{m+1}^{\Psi}(s_j)\setminus L_{m}^{\Psi}(s_j)$.
For convenience, given any two functions $f, g:\R^+ \to \R^+$, write
$f(t)\prec g(t)$ whenever
$\limsup_{t\to \infty} f(t)/g(t)<\infty$. Thus $f(t)\prec g(t)$ if and
only if there exist $r,T\in \R^+$ so that $f(t)\le r g(t)$ for all
$t\ge T$. Furthermore, understand $f(t)\asymp g(t)$ to mean that both
$f(t)\prec g(t)$ and $g(t)\prec f(t)$. With this, 
$$
|\Phi_t x| \asymp e^{s_j t} t^{m} \, , \quad |\Psi_{\tau_x(t)} h(x)|
\asymp e^{s_j \tau_x(t)} \tau_x(t)^{m} \, ,
$$
and consequently, since $s_j<0$ and $h$ is Lipschitz near $0$,
$$
f_m \bigl(|s_j|\tau_x(t) \bigr) \asymp e^{-|s_j| \tau_x(t)}
\tau_x(t)^{m} \asymp |h(\Phi_t x)| \prec |\Phi_t x| \asymp
e^{-|s_j|t} t^{m} \asymp f_m (|s_j| t ) \, ,
$$
where $f_m:\R\to \R^+$ is the bijection of Proposition
\ref{lem2a}. Similarly,
$$
f_m (|s_j| t) \asymp e^{-|s_j|t} t^{m} \asymp \big|
h^{-1} \bigl(  \Psi_{\tau_x(t)} h(x)\bigr)
\big| \prec |\Psi_{\tau_x(t)} h(x)| \asymp e^{-|s_j| \tau_x(t)}
\tau_x(t)^{m} \asymp f_m\bigl( |s_j| \tau_x(t) \bigr) \, .
$$
Thus, with $r>1$ and $T\in \R^+$ sufficiently large,
$$
\frac{f_m (|s_j|t)}{r} \le f_m \bigl( |s_j| \tau_x(t)\bigr) \le r f_m
(|s_j| t) \qquad \forall t \ge T \, .
$$
Since $f_m$ is decreasing, it follows that
\begin{equation}\label{eqqqz}
f_m^{-1} \bigl( rf_m (|s_j| t )\bigr) - |s_j| t \le |s_j|( \tau_x (t)
- t ) \le 
f_m^{-1} \left( \frac{f_m(|s_j| t)}{r}\right) - |s_j| t
\qquad \forall t \ge T \, .
\end{equation}
By Proposition \ref{lem2a}, the left- and right-most expressions in
(\ref{eqqqz}) both have finite limits as $t\to \infty$. (In fact,
these limits are $-\log r$ and $\log r$
respectively, with the latter number being the larger one because
$r>1$.) From this it is clear that $\sup_{t\ge
  0}|\tau_x(t) -  t|<\infty$ as claimed.
\end{proof}

\begin{rem}\label{rem37}
Simple examples show that (\ref{eq4_6BA}) and (\ref{eq4_6BB}) do not
even carry over to hyperbolic flows in general, 
and they may also fail if merely $h\in \cH_{1^-}(X)$.
\end{rem}


\section{Characterizing Lipschitz equivalence}\label{sec4}

This section provides a proof of Theorem \ref{thm1x}, the main result previewed in the
Introduction. This is first done for a special case,
namely for stable flows with a single Lyapunov exponent and
equal-sized irreducible components. Although this case is very special
indeed, it elucidates the preservation of most but not all
characteristic quasi-frequencies (i.e., imaginary parts of eigenvalues
not on $i\R$) under Lipschitz equivalence. As it turns out, this partial preservation of quasi-frequencies is
precisely what distinguishes Lipschitz equivalence from its
differentiable  and H\"{o}lder counterparts. Thus, the two
observations regarding those very special flows (Lemmas \ref{lem4_1}
and \ref{lem4_2}) constitute important steps towards Theorem
\ref{thm1x}. Both observations build naturally on an auxiliary
result (Lemma \ref{lem4_0}) which is going to be established first. To set
the scene for this, fix $k,\ell\in \N$ along with $m_1, \ldots, m_k, n_1,
\ldots, n_{\ell}\in \N$, $b\in \R^k$, $c\in \R^{\ell}$, and
$k_0,\ell_0\in \N_0$ with 
\begin{equation}\label{eq4_8}
\sum\nolimits_{j=1}^k m_j \, {\sf d}(b_j) + k_0 =
\sum\nolimits_{j=1}^{\ell} n_j \, {\sf d}(c_j) + \ell_0 = d \, ;
\end{equation}
furthermore, let $A_0\in \R^{k_0 \times k_0}$, $B_0\in \R^{\ell_0 \times
  \ell_0}$ be such that
\begin{equation}\label{eq4_9}
\lim\nolimits_{t\to \infty} e^t (k_0 |e^{tA_0}| + \ell_0 |e^{tB_0}|) = 0 \, .
\end{equation}
With these ingredients, consider the two $d\times d$-matrices
\begin{equation}\label{eq4_10}
A  = \mbox{\rm diag}\, \bigl[ J_{m_1}(-1+ib_1), \ldots ,
J_{m_k}(-1+ib_k), A_0 \bigr] \, ,  \quad 
 B  =  \mbox{\rm diag}\, \bigl[ J_{n_{1}} (-1+ic_1), \ldots ,
 J_{n_{\ell}}(-1+ic_{\ell}) , B_0\bigr] \, ,
\end{equation}
where $A_0$, $B_0$ are understood to be present in (\ref{eq4_10}) only if $k_0\ge 1$,
$\ell_0\ge 1$ respectively. With $\Phi$, $\Psi$ generated by $A$,
$B$ respectively, notice that both flows are stable with $\lambda^{\Phi} = 
-1 =\lambda^{\Psi}$ as well as $m^{\Phi} = \max_{j=1}^k m_j$ and
$m^{\Psi}=\max_{j=1}^{\ell} n_j$. To neatly formulate the following
auxiliary result, for every $p\in \N$ 
denote $\{1\le j \le k: m_j=  p\}$ by $U_p$ and
$\{1\le j \le \ell: n_j=  p\}$ by $V_p$.

\begin{lem}\label{lem4_0}
Given $k,\ell\in \N$, $m_1 , \ldots, m_k, n_1, \ldots, n_{\ell}\in
\N$, $b\in \R^k$, $c\in \R^{\ell}$, and $k_0,\ell_0\in \N_0$ with {\rm
(\ref{eq4_8})}, as well as $A_0\in \R^{k_0\times k_0}$, $B_0 \in
\R^{\ell_0\times \ell_0}$ with {\rm (\ref{eq4_9})} as appropriate, let $\Phi$, $\Psi$ be the flows on $\R^d$ generated by
$A$, $B$ in {\rm (\ref{eq4_10})} respectively. Assume that
$\Phi\stackrel{1}{\thicksim}\Psi$. Then:
\begin{enumerate}
\item $\sum_{j\in U_p } {\sf d}(b_j) = \sum_{j\in
    V_p} {\sf d}(c_j)$ for every $p \in \N$;
\item for every $p\in \N\setminus \{1\}$ either $U_p=V_p=\varnothing$
  or else there exists
  a bijection $g: U_p \to V_p$ so that $|b_j|=|c_{g(j)}|$ for every $j\in U_p$. 
\end{enumerate}
\end{lem}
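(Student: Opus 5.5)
The plan is to normalise with Theorem~\ref{lem3aol1}, read off (i) from dimensions, and then get (ii) one Jordan size $p\ge 2$ at a time by restricting to refined Lyapunov spaces and passing to a top-level quotient to which Proposition~\ref{prop1zz} applies. Let $h\in\cH_1(\R^d)$ with $\Phi\stackrel{h}{\thicksim}\Psi$. Both flows are stable with $\lambda^\Phi=\lambda^\Psi=-1$. The first step is to apply Theorem~\ref{lem3aol1}. Comparing the largest Lyapunov exponents in $\Lambda^\Phi=\alpha\Lambda^\Psi$ (Proposition~\ref{prop2bb}) forces $\alpha=1$. Hence
\[
h\bigl(L_m^\Phi(-1)\bigr)=L_m^\Psi(-1)\quad \forall m\in\N_0, \qquad \sup\nolimits_{t\ge0}|\tau_x(t)-t|<\infty \quad \forall x\ne0 .
\]
From (\ref{eqlem52}) one reads off $\dim L_m^\Phi(-1)=k_0+\sum_{j=1}^k \min\{m,m_j\}\,{\sf d}(b_j)$, and analogously for $\Psi$. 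Since $h$ is a homeomorphism mapping one subspace onto another, invariance of domain gives equal dimensions for every $m$. Taking second differences in $m$ gives $\sum_{j:\,m_j\ge p}{\sf d}(b_j)=\sum_{j:\,n_j\ge p}{\sf d}(c_j)$ for all $p\ge1$. Subtracting consecutive values of $p$ yields (i).

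For (ii), fix $p\ge2$ and restrict $h$ to $L_p^\Phi(-1)\to L_p^\Psi(-1)$. On this space the restricted flows again have the shape (\ref{eq4_10}). Their top Jordan size is exactly $p$, and the top-size blocks are exactly those indexed by $U_p$, respectively $V_p$; the remaining blocks are truncated and absorbed, and blocks with exponent $<-1$ sit in $A_0$, $B_0$. So it suffices to treat the case $p=m^\Phi=m^\Psi$. Write $K$ for the nilpotent part as in (\ref{eqlem52}). On the range $W^\Phi:=K^{p-1}X\cong X/\ker K^{p-1}=X/D^\Phi$ (Theorem~\ref{lem5bb}), which has dimension $\sum_{j\in U_p}{\sf d}(b_j)$, consider the diagonal flow $\widehat\Phi$ generated by $\mbox{\rm diag}[J_1(-1+ib_j)]_{j\in U_p}$. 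By (\ref{eqlem52}), $\Phi_t x=e^{-t}\frac{t^{p-1}}{(p-1)!}\bigl(R(t)K^{p-1}x+O(1/t)\bigr)$ for $x\notin\ker K^{p-1}$.

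The key computation is an ``asymptotic tangency'' relation, in the spirit of Step~I of Theorem~\ref{lem5bb}. Call $x\approx y$ if there is $\rho\in\cR$ with $|\Phi_tx-\Phi_{\rho(t)}y|/|\Phi_{\rho(t)}y|\to0$. Because $p\ge2$, the polynomial factor satisfies $(\rho(t)/t)^{p-1}\to1$ whenever $\rho(t)-t$ stays bounded, while the $O(1/t)$ error terms vanish in relative size. From this I expect to show that for $x,y\notin\ker K^{p-1}$,
\[
x\approx y \iff K^{p-1}y\in \widehat\Phi_{\R}\bigl(K^{p-1}x\bigr).
\]
Two facts drive this: if $\rho(t)-t$ is unbounded, the exponential mismatch destroys tangency, and if $\rho(t)-t\to c$, tangency forces $K^{p-1}y=\widehat\Phi_{c}K^{p-1}x$ (and conversely $\rho(t)=t-c$ works). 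Exactly as in the proof of Lemma~\ref{lem3_1}, Lemma~\ref{prop3} together with the maps $\tau_x$ shows that $h$ preserves $\approx$. Hence $h$ induces a homeomorphism $\hat h:W^\Phi\setminus\{0\}\to W^\Psi\setminus\{0\}$ sending $\widehat\Phi$-orbits to $\widehat\Psi$-orbits. In particular (i) for $U_p$, $V_p$ is recovered, and $\hat h$ is defined by $\hat h(K^{p-1}x)=K^{p-1}h(x)$, with well-definedness coming from $h(\ker K^{p-1})=\ker K^{p-1}$.

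The main obstacle is turning $\hat h$ into an $f$ as in Proposition~\ref{prop1zz}(i), i.e.\ one that preserves periods of the purely rotational flows generated by $\mbox{\rm diag}[J_1(ib_j)]_{j\in U_p}$. The orbit-space map alone forgets time, so I would use the bounded time change $\tau_x(t)=t+O(1)$ from Theorem~\ref{lem3aol1}. The rotational period $T$ of $K^{p-1}x$ is characterised as the smallest $c>0$ with $x\approx e^{-c}x$ via $\rho(t)=t-c$. Equivalently, $\Phi_{t}x$ and $\Phi_{t+nT}x$ have the same leading direction; that is, the relative distance between $\Phi_t x$ and $e^{nT}\Phi_{t+nT}x$ tends to $0$ along the orbit. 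Pushing this through $h$, I would use that $\tau_x(t+nT)-\tau_x(t)-nT$ is bounded and then argue via Proposition~\ref{propxy}(ii) applied to the projected rotational dynamics. The goal is that the limit $\lim_{n}$ exists precisely when $nT$ is a multiple of the corresponding rotational period on the $\Psi$ side. Once this is in place, I would define $f$ on a neighbourhood of $0$ in $W^\Phi$ by normalising $\hat h$ radially, using the $\widehat\Phi$-orbit through each point to fix the scale, so that $f$ is continuous and one-to-one with $T_x=T_{f(x)}$. Proposition~\ref{prop1zz}(i)$\Rightarrow$(ii) then supplies the bijection $g:U_p\to V_p$ with $|b_j|=|c_{g(j)}|$. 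If $U_p=\varnothing$, then (i) gives $V_p=\varnothing$ as well.
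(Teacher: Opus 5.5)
Your treatment of (i) and the normalisation $\alpha=1$ is fine and is essentially the paper's Step I. The gap is in (ii), at exactly the step you flag as the main obstacle. It is not a matter of missing details: none of your ingredients distinguishes $p\ge 2$ from $p=1$, and for $p=1$ statement (ii) is false. For $p=1$, each of your ingredients still holds:
your relation $\approx$ on $X\setminus D^{\Phi}$ is still characterised by $\widehat{\Phi}$-orbits;
$h$ still preserves it by Lemma \ref{prop3};
$\tau_x(t)-t$ is still bounded;
the rotational period is still the least $c>0$ with $x\approx e^{-c}x$.
The only place you invoke $p\ge 2$, the factor $(\rho(t)/t)^{p-1}$, is trivially $1$ when $p=1$. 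Yet the map $h_a$ in (\ref{eq5_add20}) is a bi-Lipschitz \emph{conjugacy}, so $\tau_x(t)=t$. It goes from the flow generated by $J_1(-1+ia)$, which has rotational period $2\pi/|a|$, to the flow generated by $-I_2$, which has no rotation at all.

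What breaks is ``pushing through $h$''. Both of your characterisations of $T$ ($x\approx e^{-c}x$, or $\Phi_tx$ versus $e^{nT}\Phi_{t+nT}x$) involve radial rescaling, and a bi-Lipschitz $h$ does not respect radial rescaling: $h_a(e^{-c}x)$ is $e^{-c}h_a(x)$ rotated through the angle $-ac$. So Lemma \ref{prop3} says nothing about how the directions of $h(\Phi_tx)$ and $h(\Phi_{t+nT}x)$ compare. The bound on $\tau_x(t+nT)-\tau_x(t)-nT$ carries no frequency information on its own. Comparisons among top-level points only see the orbit space of $\widehat{\Phi}$, which is a sphere whatever the $b_j$ are.

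The missing idea is a comparison \emph{across} refined Lyapunov levels, and this is exactly where $p\ge 2$ enters. The paper compares $x\notin L_{m-1}$ with $y=K^{m-1}x\in L_1\setminus L_0$, using $\rho_x$ defined by $|\Phi_tx|=|\Phi_{\rho_x(t)}y|$.
\begin{itemize}
\item Since $\rho_x(t)=t-(m-1)\log t+O(1)$, the gap $t-\rho_x(t)$ tends to $\infty$ continuously. So one can pick $t_n\to\infty$ with $t_n-\rho_x(t_n)=(n+j_0)T$, making $\Phi_{t_n}x$ and $\Phi_{\rho_x(t_n)}y$ asymptotically tangent.
\item Lemma \ref{prop3} transfers this tangency to $\Psi_{\tau_x(t_n)}h(x)$ and $\Psi_{\tau_y(\rho_x(t_n))}h(y)$.
\item Boundedness of $\tau_x-{\rm id}$ and $\tau_y-{\rm id}$ (Theorem \ref{lem3aol1}), together with norm matching, pins the $\Psi$-side phase difference to $nT+{\rm const}+o(1)$.
\item Proposition \ref{propxy}, applied to the bounded flow $\widetilde{\Psi}$ generated by $Q(B+I_d)$, then gives $Qh(x)\in\mbox{\rm Per}_T\widetilde{\Psi}$.
\end{itemize}
The analogous argument for fixed points, plus symmetry, yields $T^{\widetilde{\Phi}}_x=T^{\widetilde{\Psi}}_{h(x)}$ on all of $\R^d$. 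Proposition \ref{prop1zz} then applies to $Q_BhQ_A$ directly, so no radially normalised $f$ on $W^{\Phi}$ is needed.

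Two smaller points. First, your formula $\hat h(K^{p-1}x)=K^{p-1}h(x)$ is not well defined for the reason you give, since $h$ is not linear; $\approx$ only yields a correspondence of orbits. Second, restricting to $L_p^{\Phi}(-1)=\ker K^p$ truncates larger blocks to size $p$. The top-size blocks there are therefore indexed by $U^*_p=\{j:m_j\ge p\}$, not by $U_p$. To reach a bijection $U_p\to V_p$ you need the finite cancellation of Proposition \ref{prop2zz}, applied to the bijections for $U^*_p$ and $U^*_{p+1}$.
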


In the proof of Lemma \ref{lem4_0} given below, the following elementary
observation regarding bijections in general is used; see also Figure \ref{fig51}.

\begin{prop}\label{prop2zz}
Given non-empty sets $U_0, U, V_0, V, W$ with $U_0\subset U$, $V_0
\subset V$ as well as functions $f:U\to W$, $g:V\to W$, assume 
there exist bijections $h_0 :
U_0 \to V_0$, $h:U\to V$ with $f|_{U_0} =
g \circ h_0$, $f=g\circ h$. If $U_0$, $V_0$ are finite then
there exists a bijection $\widetilde{h}:U\setminus U_0 \to V \setminus
V_0$ with $f|_{U\setminus U_0} = g \circ \widetilde{h}$.
\end{prop}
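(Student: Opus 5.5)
The statement is Proposition \ref{prop2zz}, which is purely combinatorial. The plan is to reduce it to a counting argument on fibres of $f$ and $g$.

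First, for every $w\in W$ set $U^w=f^{-1}(w)$, $V^w=g^{-1}(w)$, $U_0^w=U^w\cap U_0$, and $V_0^w=V^w\cap V_0$. The hypothesis $f=g\circ h$ with $h$ bijective shows that $h$ maps $U^w$ bijectively onto $V^w$ for every $w$. Likewise, $f|_{U_0}=g\circ h_0$ shows that $h_0$ maps $U_0^w$ bijectively onto $V_0^w$. Hence $\#U^w=\#V^w$ as cardinals, and $\#U_0^w=\#V_0^w$, which is finite because $U_0$, $V_0$ are finite.

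Second, I would construct $\widetilde h$ fibrewise. It suffices, for each $w\in W$, to produce a bijection $\widetilde h^w:U^w\setminus U_0^w\to V^w\setminus V_0^w$. Then $\widetilde h:=\bigcup_w \widetilde h^w$ (the fibres partition $U\setminus U_0$ and $V\setminus V_0$) is a bijection with $g\circ\widetilde h=f$ on $U\setminus U_0$. For a fixed $w$, removing a finite set of size $n$ from sets of equal cardinality leaves sets of equal cardinality. If $U^w$ is finite, this is just subtraction, $\#U^w-n=\#V^w-n$. If $U^w$ is infinite, removing finitely many points does not change cardinality, since $\kappa-n=\kappa$ for infinite $\kappa$; this is elementary, e.g.\ by choosing a countably infinite subset and shifting. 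Any bijection between the remainders then serves as $\widetilde h^w$. Choosing these bijections for all $w$ simultaneously uses the axiom of choice, which is harmless here. In the paper's application all sets are finite, where the step is trivial.

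The main obstacle, such as it is, is the infinite-fibre case. To make it explicit, I would use the classical "Hilbert hotel" argument rather than cite cardinal arithmetic. Pick distinct points $u_1,u_2,\ldots\in U^w\setminus U_0^w$ (possible since $U^w$ is infinite and $U_0^w$ finite). Then define a bijection $U^w\setminus U_0^w\to U^w$ by sending the $n$ points of $U_0^w$'s complement-shift: map $u_i\mapsto$ the $i$-th element of the list $U_0^w,u_1,u_2,\ldots$, and fix all other points. Do the same on the $V$ side and compose with $h|_{U^w}$ to obtain $\widetilde h^w$. Alternatively, finiteness of $U_0,V_0$ allows a direct "path-following" construction starting from $h$: for $u\in U\setminus U_0$, iterate $h_0^{-1}\circ h$ while the image lies in $V_0$. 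Since $U_0$ is finite, the iteration terminates or cycles, and cycles cannot occur, by injectivity, starting outside $U_0$. Each step preserves the $f$-value. I would present whichever version is shorter, likely the fibrewise counting.
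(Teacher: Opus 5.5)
Your proposal is correct. The paper gives no proof of Proposition~\ref{prop2zz}. It calls the statement an elementary observation and offers only Figure~\ref{fig51} and the two examples showing that finiteness of $U_0$, $V_0$ cannot be dropped. There is therefore no argument to compare yours against, and your fibrewise counting supplies the missing justification.

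All the steps check out:
\begin{itemize}
\item $f=g\circ h$ gives $h\bigl(f^{-1}(w)\bigr)=g^{-1}(w)$, and $f|_{U_0}=g\circ h_0$ gives $h_0(U_0^w)=V_0^w$.
\item The sets $U^w\setminus U_0^w$ and $V^w\setminus V_0^w$ partition $U\setminus U_0$ and $V\setminus V_0$. Gluing fibrewise bijections therefore yields $\widetilde{h}$ with $g\circ\widetilde{h}=f|_{U\setminus U_0}$.
\item Your explicit map is a bijection $U^w\setminus U_0^w\to U^w$: send $u_i$ to the $i$-th term of $a_1,\ldots,a_n,u_1,u_2,\ldots$, where $a_1,\ldots,a_n$ enumerate $U_0^w$, and fix all other points.
\end{itemize}

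Your argument also shows exactly where finiteness enters, consistent with the paper's counterexample. There the fibre $f^{-1}(1)$ of odd numbers loses infinitely many points on both sides, and the remainders $\{1,3\}$, $\{1\}$ differ in size even though $\#U^1=\#V^1$. In the only application (Step~II of the proof of Lemma~\ref{lem4_0}) all sets are finite, so the finite-fibre subtraction alone suffices, as you note.

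Two minor points. First, the phrase about ``the $n$ points of $U_0^w$'s complement-shift'' is garbled; keep only the explicit description that follows it.

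Second, the path-following construction is choice-free and changes $h$ only on the finite set $h^{-1}(V_0)\setminus U_0$. If you present it instead, termination and preservation of $f$-values are not yet enough: you also need bijectivity of $\widetilde{h}$.
\begin{itemize}
\item Injectivity follows from injectivity of $h_0^{-1}\circ h$, as you indicate.
\item For surjectivity, fix $v\in V\setminus V_0$ and iterate the injective map $h^{-1}\circ h_0$ backwards from $h^{-1}(v)$ while the iterates stay in $U_0$. By injectivity, an eventual repetition would force a return to $h^{-1}(v)$. That means $h^{-1}(v)=h^{-1}\bigl(h_0(u'')\bigr)$ for some $u''\in U_0$, whence $v\in V_0$, a contradiction. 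By finiteness of $U_0$ the backward orbit therefore leaves $U_0$ at some $u\notin U_0$, and one checks directly that $\widetilde{h}(u)=v$.
\end{itemize}
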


\begin{figure}[ht] 
  \psfrag{tl1a}[]{$U$}
  \psfrag{tl1b}[]{$V$}
  \psfrag{tl1a0}[]{$U_0$}
  \psfrag{tl1b0}[]{$V_0$}
  \psfrag{tr1}[]{$W$}
\psfrag{ttf}[]{$f$}
\psfrag{ttg}[]{$g$}
\psfrag{tth}[]{$h$}
\psfrag{tth0}[]{$h_0$}
\psfrag{tth1}[]{ $\widetilde{h}$}
\psfrag{tt1}[l]{ $f = g \circ h$}
\psfrag{tt2}[l]{ $f|_{U_0}= g\circ h_0$}
\psfrag{tt3}[l]{ $h$, $h_0$ bijections}
\psfrag{tt4}[l]{ $\widetilde{h}$ bijection}
%
%
\begin{center}
\includegraphics{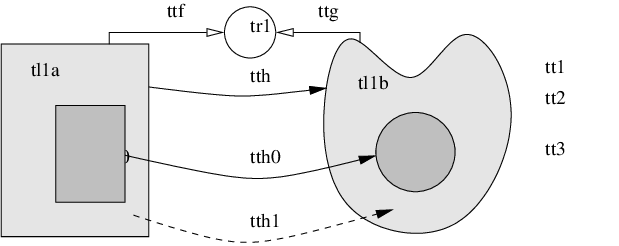}
\end{center}
\vspace*{-4mm}
\caption{If $U_0$, $V_0$ are
  finite then $f|_{U\setminus U_0} = g\circ \widetilde{h}$ for some bijection
  $\widetilde{h}:U\setminus U_0 \to V\setminus V_0$; see  Proposition \ref{prop2zz}.}\label{fig51}
\end{figure}

Finiteness of $U_0$, $V_0$ is essential in Proposition
\ref{prop2zz}. To see this, take for instance $U=V=\N$, $W=\{0,1\}$ and $f(n)=g(n)
= |\sin ( n \pi/2)|$. Then $U_0 = \N\setminus \{1,3\}$, $V_0 = 
\N\setminus\{1\}$ meet all requirements except finiteness, yet
clearly no bijection $\widetilde{h}:U\setminus U_0 \to V\setminus V_0$ whatsoever
exists. Similarly, $V_0 = \N\setminus \{1,2\}$ meets the same
requirements, but since $f(U\setminus U_0) \ne g( V\setminus V_0)$ no
bijection $\widetilde{h}:U\setminus U_0 \to V\setminus V_0$ can
possibly have the property that $f|_{U\setminus U_0} = g \circ \widetilde{h}$. 

\begin{proof}[Proof of Lemma \ref{lem4_0}]
Utilizing virtually every tool developed so far, the argument is
elementary but somewhat intricate; for the reader's convenience, it
is divided into five separate steps. Throughout, assume that $\Phi
\stackrel{h}{\thicksim} \Psi$ with $h\in \cH_1(\R^d)$.

\medskip

\noindent
\underline{Step I:} Proof of (i).

\smallskip

\noindent
Note that $L^{\Phi} (-1) = L^{\Psi}(-1) =\R^d$, whereas
$$
L^{\Phi}(-1^-) = L_0^{\Phi} (-1) = \mbox{\rm span}\, \{e_{d-k_0+1},
\ldots , e_d\} \, , \quad
L^{\Psi}(-1^-) = L_0^{\Psi} (-1) = \mbox{\rm span}\, \{e_{d-\ell_0+1},
\ldots , e_d\} \, ,
$$
which shows that $\alpha = 1$ in Theorem \ref{lem3aol1}, and $k_0
=\ell_0$. For convenience, let $L_0 = L_0^{\Phi}(-1) =
L_0^{\Psi}(-1)$, with $\dim L_0 = k_0=\ell_0$, and denote by $P$ the
orthogonal projection of $\R^d$ onto $\mbox{\rm span}\{e_1, \ldots ,
e_{d-k_0}\}$, thus $L_0 = \ker P$. Also, for every $p\in \N$ let
$U^*_p =\bigcup_{q\ge p}U_q = \{1\le j \le k: m_j \ge p\}$, and
similarly $V^*_p = \bigcup_{q\ge p} V_q$. With this, observe that 
$$
\dim L_{p}^{\Phi}(-1) = \dim L_{p-1}^{\Phi} (-1) + \sum\nolimits_{j\in
  U^*_p} {\sf d}(b_j) \, , \quad
\dim L_{p}^{\Psi}(-1) = \dim L_{p-1}^{\Psi} (-1) + \sum\nolimits_{j\in
  V^*_p} {\sf d}(c_j) \, ,
$$
and since $\dim L_p^{\Phi}(-1) = \dim L_p^{\Psi}(-1)$ for every $p \in
\N$ by Theorem \ref{lem3aol1},
\begin{equation}\label{eqpx1}
\sum\nolimits_{j \in U_p} {\sf d}(b_j) = \sum\nolimits_{j \in U^*_p \setminus U^*_{p+1}} {\sf d}(b_j) =
\sum\nolimits_{j \in V^*_p \setminus V^*_{p+1}} {\sf d}(c_j) =
\sum\nolimits_{j \in V_p } {\sf d}(c_j)  \qquad
\forall p \in \N \, .
\end{equation}
Thus (i) holds. Notice in particular that $L_1^{\Phi}(-1) =
L_1^{\Psi}(-1)$ yields ${\sf d}(b) = {\sf d}(c)$. Also,
$m^{\Phi} = m^{\Psi}=:m$ because $m^{\Phi} = \max \{p \in
\N : U_p \ne \varnothing\}$ and  $m^{\Psi} = \max \{p \in
\N : V_p \ne \varnothing\}$.

\medskip

\noindent
\underline{Step II:} Formulae for $\Phi$, $\Psi$.

\smallskip

\noindent
To prepare for the proof of (ii), notice that by permuting the Jordan
blocks $J_{m_j} (-1 + ib_j)$ if necessary, one may assume that $m=m_1
\ge \ldots \ge m_k$. Recall from (\ref{eqlem52}) that
\begin{equation}\label{eqpx2}
  \Phi_t = e^{tA} = e^{-t} R^{\Phi}(t) \left(
P + \sum\nolimits_{j=1}^{m-1}\frac{t^j}{j!} K^j + e^t \, \mbox{\rm
  diag}\, \bigl[O_{d-k_0}, e^{tA_0}\bigr] 
    \right) \qquad \forall t \in \R \, ,
\end{equation}
where $R^{\Phi}(t), K\in \R^{d\times d}$ are given by
$$
R^{\Phi}(t) = \left\{\begin{array}{ll} I_d & \mbox{\rm if $t=0$}, \\
  \mbox{\rm diag}\,\bigl[R_{m_1}(b_1t), \ldots ,
                       R_{m_k}(b_kt), I_{k_0} \bigr] & \mbox{\rm if
                                                       $t\ne 0 $},
                     \end{array}
                     \right.
\quad  K  =  \mbox{\rm diag}\, \bigl[K_{m_1}(b_1), \ldots ,
K_{m_k}(b_k), O_{k_0}\bigr] \, .
$$
Notice that the number of $p\times p$-Jordan blocks appearing in $K$
is $\sum_{j\in U_p} {\sf d}(b_j)$, and hence the total number of
Jordan blocks (of any size) is precisely $\sum_{j\in U_1^*} {\sf
  d}(b_j) = {\sf d}(b)$; thus
$$
K = \mbox{\rm diag}\, \bigl[ \, \underbrace{J_{m_1},
                                                  \ldots ,
                                                  J_{m_k}}_{{\sf
                                                  d}(b)\, 
                                                  {\rm blocks}},
O_{k_0}\bigr] \, . 
$$
Similarly, assume w.l.o.g.\ that $m=n_1 \ge \ldots \ge n_{\ell}$; then
\begin{equation}\label{eqpx3}
  \Psi_t = e^{tB} = e^{-t} R^{\Psi}(t) \left(
P + \sum\nolimits_{j=1}^{m-1}\frac{t^j}{j!} K^j + e^t \, \mbox{\rm
  diag}\, \bigl[O_{d-\ell_0}, e^{tB_0} \bigr] 
    \right) \qquad \forall t \in \R \, ,
\end{equation}
with $R^{\Psi}(t) \in \R^{d\times d}$ given by
$$
R^{\Psi}(t) = \left\{\begin{array}{ll} I_d & \mbox{\rm if $t=0$}, \\
   \mbox{\rm diag}\,\bigl[R_{n_1}(c_1t), \ldots ,
 R_{n_{\ell}}(c_{\ell}t), I_{{\ell}_0}\bigr]  & \mbox{\rm if
                                                       $t\ne 0 $}.
                     \end{array}
                     \right.
$$
By virtute of (\ref{eqpx1}), the same matrix $K$ can be assumed to appear in
 (\ref{eqpx2}) and (\ref{eqpx3}). Also, $K$ 
 commutes with $A$, $B$, $R^{\Phi}(t)$, $R^{\Psi}(t)$, and
 $P=\mbox{\rm diag}\, [I_{d-k_0}, O_{k_0}]$. In fact, $KP=PK=K$, and
 hence $K$ commutes with $\Phi_t$, $\Psi_t$ as well; moreover, $|K^j|=1$ for every
 $j\in \{0, \ldots , m-1\}$, whereas $K^{m}=O_d$. Similarly,
 $K^{\top}$ commutes with $P$, $\Phi_t$, $\Psi_t$. Step V below will
 show that
 \begin{align}\label{eqpx4a}
& \mbox{\rm for every $p\in \N \setminus \{1\}$ either
  $U_p^*=V_p^*=\varnothing$ or else there exists a bijection} \nonumber \\[-4mm]
   & \\[-3mm]
   & g_p: U^*_p \to V^*_p\: \mbox{\rm so that $|b_j| = |c_{g_p(j)}|$ for every $j\in U^*_p$.} \nonumber
\end{align}
Notice that validity of (\ref{eqpx4a}) immediately implies (ii): Indeed, since
$U_p = U_p^*\setminus U^*_{p+1} $ and $V_p = V^*_{p}\setminus
V^*_{p+1}$ for every $p\in \N$, an application of Proposition
\ref{prop2zz} with $U_0 = U^*_{p+1}$, $U=U^*_p$, $V_0 =
V^*_{p+1}$, $V=V^*_p$, $W=\R$ as well as $f(j)=|b_{j}|$, $g(j)=|c_{j}|$ and
$h_0 = g_{p+1}$, $h=g_p$ yields the desired conclusion.

 Since there is nothing to prove in (\ref{eqpx4a}) whenever $m=1$ or $p>m$, henceforth assume
 that $m\ge 2$ and $p\le m$. If $p<m$ then applying Theorem
 \ref{lem5bb} (to determine $D^{\Phi}$) and Lemma
 \ref{lem3_1} (to replace $\Phi$ by $\Phi|_{\R \times
 D^{\Phi}}$) iteratively $m-p$ times yields a flow that formally is
 generated by $A$ as in (\ref{eq4_10}), but with each $m_j$
 replaced by $\min \{m_j, p\}$. Since $\min \{m_1, p\}=p$, no
 generality is lost by assuming $p=m$ from the outset, in which case
 $U_p\ne \varnothing$, $V_p\ne \varnothing$ while also $m_j = m$ for
 every $j\in U_p$ and $n_j = m$ for every $j\in V_p$. In other words,
 it suffices to prove (\ref{eqpx4a}) for $p=m\ge 2$, in which case
 $U_p^* = U_p = \{1, \ldots, \# U_m\}$ and $V_p^* = V_p = \{1, \ldots,
 \# V_m\}$. This will be done in Step V, after two further preparatory steps.
 
 \medskip

\noindent
\underline{Step III:} Construction of auxiliary bounded flows
$\widetilde{\Phi}$, $\widetilde{\Psi}$.

\smallskip

\noindent
Notice first that
$$
L_j:= L_j^{\Phi}(-1) = L_j^{\Psi} (-1) = \ker K^{j} \qquad \forall
j\in \{1, \ldots , m\} \, ,
$$
so in particular $L_1 = \ker K$ and $L_{m-1} = \ker
K^{m-1}$. Moreover,
$$
Q:= (K^{\top})^{m-1} K^{m-1} \in \R^{d\times d}
$$
is the orthogonal projection of $\R^d$ onto $\mbox{\rm range}\,
(K^{\top})^{m-1}$, with $K^{m-1}Q = K^{m-1}$, while $I_d - Q$ is the orthogonal projection onto
$\ker K^{m-1} = L_{m-1}$. Since $K$ commutes with $R^{\Phi}(t)$ and
$P$, and since
$R^{\Phi}(t) = R^{\Phi}(-t)^{\top}$ for all $t\in \R$
as well as $P=P^{\top}$ and $KP=K$,
also $QR^{\Phi}(t) = R^{\Phi}(t) Q$ and $QP=PQ=Q$. Deduce from
(\ref{eqpx2}) that $Qe^{tA} = e^{-t} R^{\Phi}(t) Q$ for all $t\in \R$,
and hence
$$
QA = \frac{{\rm d}}{{\rm d}t} Qe^{tA}\Big|_{t=0} = - Q + \frac{{\rm
    d}}{{\rm d}t} R^{\Phi}(t) \Big|_{t=0} Q\, .
$$
Since $Q^2 = Q$, it follows that $QAQ = QA$, which in turn yields $\bigl(Q(A+I_d)
\bigr)^n=Q^{\min \{n,1\}} (A+I_d)^n$ for every $n\in \N_0$.
Now, consider the flow $\widetilde{\Phi}$ on $\R^d$ generated by
$Q(A+I_d)$. Observe that $\widetilde{\Phi}_t$ is explicitly given by 
\begin{equation}\label{eqqx1}
\widetilde{\Phi}_t = e^{tQ(A+I_d)}  =
                                          \sum\nolimits_{n=0}^{\infty}
                                          \frac{t^n}{n!} 
                                          Q^{\min \{n,1\}}(A+I_d)^n  = 
I_d + Q(e^t e^{tA} - I_d)
                                          = I_d  - Q + Q R^{\Phi}(t) \qquad \forall t \in \R 
    \, ;
\end{equation}
in particular, $\widetilde{\Phi}$ is bounded. Moreover, deduce from
(\ref{eqqx1}) that 
\begin{equation}\label{eqqx2}
K^{m-1} \widetilde{\Phi}_t = R^{\Phi}(t) K^{m-1} \, , \quad
Q\widetilde{\Phi}_t = \widetilde{\Phi}_t Q \qquad \forall t \in
\R \, .
\end{equation}
Also, since
$\widetilde{\Phi}_t - \widetilde{\Phi}_t Q  = I_d - Q$ for all $t\in
\R$, it is clear that
\begin{equation}\label{eqqx2a}
T_x^{\widetilde{\Phi}} = T_{Qx}^{\widetilde{\Phi}} \qquad \forall x
\in \R^d \, .
\end{equation}
Finally, observe that the matrix $Q(A+I_d)\in \R^{d\times d}$ equals
$$
\mbox{\rm diag}\, \bigl[
(K_{m_1} (b_1)^{\top})^{m-1} K_{m_1}(b_1)^{m-1} J_{m_1}(ib_1), \ldots
, (K_{m_k} (b_k)^{\top})^{m-1} K_{m_k}(b_k)^{m-1} J_{m_k}(ib_k), O_{k_0}
\bigr] \, .
$$
Since $K_{m_j} (b_{j})^{m-1} J_{m_j}(ib_j) =O_{m_j \, {\sf
    d}(b_j)}$ whenever $m_j < m$ or $b_j = 0$, whereas
$$
(K_m (b)^{\top})^{m-1} K_m(b)^{m-1} J_m(ib) = \left[
  \begin{array}{lr|lr}
     0 &  \cdots & \cdots & 0 \\
    \vdots & \ddots & & -b \\ \hline
    \vdots & & \ddots & \vdots  \\
    0 & b & \cdots  &  0    \end{array}
  \right]\qquad \forall b \in \R\setminus \{0\} \, ,
  $$
there exists a permutation matrix $Q_A \in \R^{d\times d}$ with
$Q_A^{-1} = Q_A^{\top} = Q_A$ so that
\begin{equation}\label{eqrx1}
Q_A Q(A+I_d) Q_A = \mbox{\rm diag}\, \bigl[
J_1(ib_1), \ldots , J_1(ib_{\# U_m}) , O_{d_0}
\bigr] \, ,
\end{equation}
where $d_0= d - \sum_{j\in U_m} {\sf d}(b_j)$.

In complete analogy, let the flow $\widetilde{\Psi}$ on $\R^d$ be generated by
$Q(B+I_d)$. Then (\ref{eqqx1})--(\ref{eqqx2a}) remain valid with $\Phi$, $A$
replaced by $\Psi$, $B$ respectively, and since also $d_0 = d - \sum_{j\in
  V_m}{\sf d}(c_j)$ by (i), there exists a permutation matrix $Q_B\in
\R^{d\times d}$ with $Q_B^{-1} = Q_B^{\top} = Q_B$ so that
$$
Q_B Q(B+I_d) Q_B = \mbox{\rm diag}\, \bigl[
J_1(ic_1), \ldots , J_1(ic_{\# V_m}) , O_{d_0}
\bigr]  \, .
$$

 \medskip

\noindent
\underline{Step IV:} Distortion ratios for
$\Phi$, $\Psi$.

\smallskip

\noindent
Pick any $x\in \R^d \setminus L_{m-1}$ and let
$y = K^{m-1}x \in L_1 \setminus L_0$ for convenience. Note that $Py =
y$, and by (\ref{eqpx2}),
$$
|\Phi_t x| = e^{-t} \left|
Px + \sum\nolimits_{j=1}^{m-1} \frac{t^j}{j!} K^j x + e^t \, \mbox{\rm
  diag}\,\bigl[O_{d-k_0}, e^{tA_0}\bigr] x
\right| \, , \quad
|\Phi_t y| = e^{-t} |y | \qquad \forall t \in \R \, .
$$
Thus, with the function $\rho_x :\R\to \R$ given by
\begin{equation}\label{eqqx5}
  \rho_x (t) = t + \log |y| - \log \left|
Px + \sum\nolimits_{j=1}^{m-1} \frac{t^j}{j!} K^j x + e^t \, \mbox{\rm
  diag}\,\bigl[O_{d-k_0}, e^{tA_0}\bigr] x
\right| \qquad \forall t \in \R \, ,
\end{equation}
clearly $|\Phi_t x| = |\Phi_{\rho_x(t)}y|$ for every $t\in
\R$, and $\rho_x$ is smooth (though not necessarily monotone) with 
\begin{equation}\label{eqqx5a}
  \rho_x(t) = t - (m-1) \log t  + \log (m-1)! + o(1) \qquad \mbox{\rm
    as } t \to \infty \, ;
\end{equation}
in particular, $\rho_x \in \cR$. Moreover, deduce from (\ref{eqpx2}) that for every sufficiently
large $t$,
\begin{align*}
  \left|
\frac{\Phi_t x - \Phi_{\rho_x(t)}y}{|\Phi_{\rho_x(t)}y|} - \Bigl(
  R^{\Phi}(t) - R^{\Phi} \bigl( \rho_x(t) \bigr) \Bigr) \frac{y}{|y|}
  \right|  & = \left|
\frac{Px + \sum_{j=1}^{m-1} t^j K^jx/j! + e^t \, \mbox{\rm diag}\,
             \bigl[O_{d-k_0}, e^{tA_0}\bigr]x }{\big|Px + \sum_{j=1}^{m-1}
             t^j K^jx/j! + e^t \, \mbox{\rm diag}\, \bigl[O_{d-k_0},
             e^{tA_0}\bigr]x \big|} - \frac{y}{|y|}
             \right|  \\
           & \le \frac{2|x| \bigl( \sum_{j=0}^{m-2} |t|^j/j! + e^t
             |e^{tA_0}|\bigr)}{\big| Px +\sum_{j=1}^{m-1} t^j
    K^j x/j! + e^t \, \mbox{\rm diag}\,
             \bigl[O_{d-k_0}, e^{tA_0}\bigr]x \big|} \, ,
\end{align*}
from which it is clear that
\begin{equation}\label{eqqx5b}
  \lim\nolimits_{t\to \infty}
  \left(
\frac{\Phi_t x - \Phi_{\rho_x(t)}y}{|\Phi_{\rho_x(t)}y|} - \Bigl(
  R^{\Phi}(t) - R^{\Phi} \bigl( \rho_x(t) \bigr) \Bigr) \frac{y}{|y|}
  \right) = 0 \, .  
\end{equation}
By Theorem \ref{lem3aol1}, $h(x)\in \R^d \setminus L_{m-1}$ and $h(y)\in
L_1 \setminus L_0$, so $K^{m-1}h(x)\ne 0$ and $Ph(y)\ne 0$. Clearly, $\tau_x$, $\tau_y$ are
increasing. For convenience, write $\tau_y \circ \rho_x$ simply as
$\sigma_x$, and notice that $\sigma_x \in \cR$. It is readily seen
that, in analogy to
(\ref{eqqx5b}),  
\begin{equation}\label{eqqx6}
  \lim\nolimits_{t\to \infty}
  \left(
\frac{\Psi_{\tau_x(t)}h( x) -
  \Psi_{\sigma_x(t)}h(y)}{|\Psi_{\sigma_x(t)}h(y)|} - \left( e^{-\gamma_x(t)}
  R^{\Psi}\bigl( \tau_x(t)\bigr)  \frac{K^{m-1}h(x)}{|K^{m-1}h(x)|} -
  R^{\Psi} \bigl( \sigma_x(t) \bigr) \frac{Ph(y)}{|Ph(y)|}\right)
\right) = 0 \, , 
\end{equation}
with the continuous function $\gamma_x : \R \to \R$ given by
\begin{equation}\label{eqqx15A}
\gamma_x(t) = \tau_x(t) - t - \bigl( \sigma_x(t) - \rho_x(t) \bigr) -
\log |K^{m-1} h(x)| + \log |Ph(y)| \qquad \forall t \in \R \, .
\end{equation}
Notice that $\sup_{t\ge 0}|\gamma_x(t)|<\infty$, also by Theorem
\ref{lem3aol1}.

 \medskip

\noindent
\underline{Step V:} Proof of (ii).

\smallskip

\noindent
To conclude the argument, pick any $x\in \R^d$; as in Step IV, let
$y=K^{m-1}x$ for convenience.

First assume that $x\in L_{m-1}$. Then $Qx=0$
 so $x\in \mbox{\rm Fix}\,
\widetilde{\Phi}$, but also $h(x)\in L_{m-1}$ and thus likewise $h(x) \in \mbox{\rm Fix}\,
\widetilde{\Psi}$.

Next assume that $x\not \in L_{m-1}$ but $x\in \mbox{\rm Fix}\,
\widetilde{\Phi}$. (This is possible only if $b_j=0$ for some $j$.)
Then by (\ref{eqqx2}),
$$
0 = K^{m-1}\bigl(\widetilde{\Phi}_t - \widetilde{\Phi}_{\rho_x (t)}\bigr)x =
\bigl( R^{\Phi} (t) - R^{\Phi} \bigl( \rho_x (t) \bigr) \bigr) \qquad
\forall t \in \R \, ,
$$
and hence, by (\ref{eqqx5b}) and Lemma \ref{prop3},
$$
\lim\nolimits_{t\to \infty} \frac{\Phi_t x -
  \Phi_{\rho_x(t)}y}{|\Phi_{\rho_x(t)}y|} = 0 = \lim\nolimits_{t\to
  \infty} \frac{\Psi_{\tau_x(t)}h( x) -
  \Psi_{\sigma_x(t)}h(y)}{|\Psi_{\sigma_x(t)}h(y)|} \, .
$$
Since $\sup_{t\ge 0}|\gamma_x(t)|<\infty$, by
(\ref{eqqx6}) also
\begin{equation}\label{eqqx6a}
\lim\nolimits_{t\to \infty} \left(
  R^{\Psi}\bigl( \tau_x(t)\bigr)  \frac{K^{m-1}h(x)}{|K^{m-1}h(x)|} - e^{\gamma_x(t)}
  R^{\Psi} \bigl( \sigma_x(t) \bigr) \frac{Ph(y)}{|Ph(y)|}\right) = 0 \, ,
\end{equation}
and since $R^{\Psi}(t)$ is an isometry necessarily $\lim_{t\to \infty}
\gamma_x (t) = 0$, which in turn implies
$$
\lim\nolimits_{t\to \infty} R^{\Psi} \bigl( \tau_x(t) - \sigma_x(t)
\bigr) \frac{K^{m-1} h(x)}{|K^{m-1}h(x)|} = \frac{Ph(y)}{|Ph(y)|} \, .
$$
Applying $(K^{\top})^{m-1}$ and recalling the
$\widetilde{\Psi}$-analogue of (\ref{eqqx2}) yields
\begin{equation}\label{eqqx15B}
\lim\nolimits_{t\to \infty} \widetilde{\Psi}_{\tau_x(t) - \sigma_x(t)}
Qh(x) = \frac{|K^{m-1}h(x)|}{|Ph(y)|}
(K^{\top})^{m-1} Ph(y)  \, .
\end{equation}
By (\ref{eqqx5a}) and (\ref{eqqx15A}), and with 
$c:= \log(|K^{m-1}h(x)|/|Ph(y)|(m-1)!)$ for convenience, observe that
\begin{align}\label{eqqx7}
\tau_x(t) - \sigma_x (t) & = -\rho_x(t) + t + \gamma_x(t) +
                          \log|K^{m-1}h(x)| - \log |Ph(y)|\nonumber \\
  & = (m-1) \log t + c + o(1) \qquad
\mbox{\rm as } t \to \infty \, .
\end{align}
From this and (\ref{eqqx15B}), it is clear that $\lim_{t\to \infty}
\widetilde{\Psi}_t Qh(x)$ exists, so $Qh(x)\in \mbox{\rm Fix}\,
\widetilde{\Psi}$ by Proposition \ref{propxy}, and hence also $h(x)\in
\mbox{\rm Fix}\, \widetilde{\Psi}$ by the $\widetilde{\Psi}$-analogue
of (\ref{eqqx2a}). In other words, $h(x)\in \mbox{\rm Fix}\,
\widetilde{\Psi}$ whenever $x\in \mbox{\rm Fix}\,
\widetilde{\Phi}\setminus L_{m-1}$. As seen earlier, $h(\mbox{\rm
  Fix}\, \widetilde{\Phi} \cap L_{m-1}) = h(L_{m-1}) = L_{m-1} \subset
\mbox{\rm Fix}\, \widetilde{\Psi}$. In summary, $h(\mbox{\rm Fix}\,
\widetilde{\Phi}) \subset \mbox{\rm Fix}\,
\widetilde{\Psi}$, and interchanging the roles of $\widetilde{\Phi}$,
$\widetilde{\Psi}$ yields $h(\mbox{\rm Fix}\,
\widetilde{\Phi}) = \mbox{\rm Fix}\,
\widetilde{\Psi}$.

Next assume that $x\in \mbox{\rm Per}\, \widetilde{\Phi}$ with
$T:=T_x^{\widetilde{\Phi}} \in \R^+$. Recall that $\rho_x$ is smooth and $\rho_x(t) - t \to -\infty$ as
$t\to \infty$, by (\ref{eqqx5a}) and since $m\ge 2$. It is possible, therefore, to choose
$j_0\in \Z$ and an increasing sequence $(t_n)$ with $t_n \to \infty$
so that
$$
\rho_x(t_n) - t_n = -(n+j_0) T \qquad \forall n \in \N \, .
$$
With this, 
$$
0 = K^{m-1} \bigl(\widetilde{\Phi}_{t_n} - \widetilde{\Phi}_{\rho_x(t_n)}\bigr) x
= \bigl(
R^{\Phi} (t_n) - R^{\Phi} \bigl( \rho_x(t_n)\bigr)
\bigr) y  \qquad \forall n \in \N \, ,
$$
and identical reasoning as before yields a discrete-time
analogue of (\ref{eqqx15B}), namely
$$
 \lim\nolimits_{n\to \infty} \widetilde{\Psi}_{\tau_x(t_n) - \sigma_x(t_n)}
Qh(x)
= \frac{|K^{m-1}h(x)|}{|Ph(y)|}  (K^{\top})^{m-1} Ph(y)  \, .
$$
Correspondingly, the discrete-time analogue of (\ref{eqqx7}) reads 
$$
\tau_x(t_n) - \sigma_x(t_n) = nT  +  c + \log (m-1)! + j_0 T + o(1)
\qquad \mbox{\rm as } n\to \infty \, .
$$ 
Thus $Qh(x) \in  \mbox{\rm Per}_T \widetilde{\Psi}$ by Proposition \ref{propxy}, and the $\widetilde{\Psi}$-analogue of
(\ref{eqqx2a}) yields $h(x)\in  \mbox{\rm Per}_T \widetilde{\Psi}$. By
what has been shown previously, $h(x)\not \in  \mbox{\rm Fix}\,
\widetilde{\Psi}$, and so $T/T_{h(x)}^{\widetilde{\Psi}}\in
\N$. In summary, $h(x)\in  \mbox{\rm Per}\, \widetilde{\Psi}\setminus
\mbox{\rm Fix}\, \widetilde{\Psi}$ whenever $x\in  \mbox{\rm Per}\, \widetilde{\Phi}\setminus
\mbox{\rm Fix}\, \widetilde{\Phi}$, and in this case
$T_x^{\widetilde{\Phi}}/ T_{h(x)}^{\widetilde{\Psi}} \in
\N$. Interchanging the roles of $\widetilde{\Phi}$, $\widetilde{\Psi}$
yields
$$
h( \mbox{\rm Per}\, \widetilde{\Phi} ) = \mbox{\rm Per}\,
\widetilde{\Psi}  \quad \mbox{\rm and} \quad T_x^{\widetilde{\Phi}} =
T_{h(x)}^{\widetilde{\Psi}} \enspace \forall x \in \mbox{\rm Per}\,
\widetilde{\Phi} \, .
$$
Finally, observe that $T_x^{\widetilde{\Phi}} = \infty =
T_{h(x)}^{\widetilde{\Psi}} $ whenever $x\not \in  \mbox{\rm Per}\,
\widetilde{\Phi}$ or equivalently $h(x)\not \in  \mbox{\rm Per}\,
\widetilde{\Psi}$. Altogether, therefore,
\begin{equation}\label{eqrx5}
 T_x^{\widetilde{\Phi}} =
T_{h(x)}^{\widetilde{\Psi}}  \qquad \forall x\in \R^d \, .
\end{equation}

Recall now that the flows $Q_A \widetilde{\Phi} Q_A$, $Q_B
\widetilde{\Psi} Q_B$ are generated by
$$
\mbox{\rm diag}\, \bigl[
J_1(ib_1), \ldots , J_1(ib_{\# U_m}) , O_{d_0}
\bigr] \, , \quad
\mbox{\rm diag}\, \bigl[
J_1(ic_1), \ldots , J_1(ic_{\# V_m}) , O_{d_0}
\bigr] 
$$ 
respectively. In particular, with $\widetilde{h}:\R^d \to \R^d$ given
by $\widetilde{h} (z) = Q_B h(Q_A z)$ for all $z\in \R^d$, it is clear
that $\widetilde{h}\in \cH_{1}(\R^d)$, and (\ref{eqrx5}) provides the
crucial middle equality in
$$
T_z^{Q_A \widetilde{\Phi} Q_A} = T_{Q_A z}^{\widetilde{\Phi}}
= T_{h( Q_A z)}^{\widetilde{\Psi}} = T^{Q_B \widetilde{\Psi}
  Q_B}_{\widetilde{h} (z)} \qquad \forall z \in \R^d \, .
$$
An application of Proposition \ref{prop1zz} yields a
bijection $g: U_m \to V_m$ so
that $|b_j| = |c_{g(j)}|$ for every $j\in U_m$. This proves
(\ref{eqpx4a}) for $p=m$, which in turn implies (ii) as previously
discussed.
\end{proof}

To appreciate the immediate relevance of Lemma \ref{lem4_0} for the
Lipschitz classification problem, it is worthwhile explicitly stating two
corollaries for the simplest special case, namely when all Lyapunov
exponents are $-1$ and all irreducible components have the same size
$m$. As such, consider (\ref{eq4_10}) with $m_1 = \ldots =
n_{\ell}=m\in \N$ and $k_0 =
\ell_0 = 0$, resulting in
\begin{equation}\label{eq4_1}
A  = \mbox{\rm diag}\, \bigl[ J_m (-1+ib_1), \ldots ,
J_m(-1+ib_k)\bigr] \, ,  \quad 
 B  =  \mbox{\rm diag}\, \bigl[ J_m (-1+ic_1), \ldots ,
 J_m(-1+ic_{\ell}) \bigr] \, ;
\end{equation}
note that (\ref{eq4_8}) now simply reads $m\, {\sf d}(b) = m\, {\sf d}(c) = d$.
The Lipschitz equivalence of $\Phi$, $\Psi$ generated by $A$,
$B$ respectively plays out very differently
 depending on whether $m=1$ or $m\ge 2$. For $m=1$, it turns out that
 $\Phi$, $\Psi$ automatically are Lipschitz equivalent.

 \begin{lem}\label{lem4_1}
Given $k,\ell\in \N$ and $b\in \R^k$, $c\in
\R^{\ell}$ with ${\sf d}(b) = {\sf d}(c)=d$, let $\Phi$, $\Psi$ be the flows on $\R^d$ generated by
$A$, $B$ in {\rm (\ref{eq4_1})} respectively, with $m=1$. Then $A$,
$B$ are Lipschitz similar, and $\Phi \stackrel{1}{\cong}\Psi$.  
\end{lem}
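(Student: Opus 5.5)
With $m=1$ we have $A=\mbox{\rm diag}\,[J_1(-1+ib_1),\ldots,J_1(-1+ib_k)] = -I_d + S_A$, where $S_A=\mbox{\rm diag}\,[J_1(ib_1),\ldots,J_1(ib_k)]$ is skew-symmetric. Likewise $B=-I_d+S_B$ with $S_B$ skew-symmetric. The plan has two parts: a purely algebraic check of Lipschitz similarity, and an explicit construction of a conjugacy.

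\emph{Lipschitz similarity.} By (\ref{eq3_1a}), $\cL J_1(-1+ib_j) = -I_{{\sf d}(b_j)}$ for every $j$. Hence $\cL A = -I_d = \cL B$, and the two are trivially similar.

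\emph{Conjugacy.} Since $S_A$ commutes with $-I_d$, we have $\Phi_t = e^{-t}e^{tS_A}$, where $e^{tS_A}$ is an orthogonal matrix for every $t$; the same holds for $\Psi_t$ with $S_B$. Each $\Phi$-orbit of a point $x\ne 0$ therefore meets the unit sphere $\mathbb S$ exactly once, namely at $t_x=\log|x|$. This suggests defining
$$
h(x) = |x|\, e^{-(\log|x|) S_B}\, e^{(\log|x|) S_A}\, \frac{x}{|x|} \quad (x\neq 0)\, , \qquad h(0)=0\, .
$$
Informally, $h$ flows $x$ by $\Phi$ back to $\mathbb S$, and then flows forward by $\Psi$ for the same time. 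To verify $h\circ\Phi_t = \Psi_t\circ h$, note that $|\Phi_t x| = e^{-t}|x|$, so $\log|\Phi_t x| = \log|x| - t$. Substituting, and using that $e^{sS_A}$ and $e^{sS_B}$ each form one-parameter groups, the identity reduces to $e^{-(\log|x|-t)S_B}e^{(\log|x|-t)S_A}e^{tS_A} = e^{tS_B}e^{-(\log|x|)S_B}e^{(\log|x|)S_A}$, which holds. By construction $|h(x)|=|x|$, so $h$ is continuous at $0$. The map built the same way with $A$ and $B$ interchanged is $h^{-1}$, so $h\in\cH$.

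\emph{Lipschitz estimate.} This is the main obstacle. Write $h(x) = M(\log|x|)\,x$ with $M(s)=e^{-sS_B}e^{sS_A}$ orthogonal and $|M'(s)|\le |S_A|+|S_B|$. For $x,y\neq 0$ with $|x|\ge|y|$,
$$
|h(x)-h(y)| \le |M(\log|x|)(x-y)| + |M(\log|x|)-M(\log|y|)|\,|y| \le |x-y| + (|S_A|+|S_B|)\,\bigl|\log|x|-\log|y|\bigr|\,|y|\, .
$$
For the last term, the mean value theorem together with $|y|\le|x|$ gives $\bigl|\log|x|-\log|y|\bigr|\,|y| \le |x|-|y| \le |x-y|$. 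Hence $h$ is globally Lipschitz with constant $1+|S_A|+|S_B|$, and the same bound applies to $h^{-1}$. The one case to watch is when one of the points is $0$, which is immediate since $|h(x)|=|x|$. It follows that $h\in\cH_1$ and $\Phi\stackrel{1}{\cong}\Psi$.
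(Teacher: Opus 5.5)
Your proof is correct. The Lipschitz-similarity half is the paper's own computation $\cL A=-I_d=\cL B$, but you build the conjugacy differently.

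The paper never conjugates $\Phi$ to $\Psi$ directly. It conjugates each of them to the flow generated by $-I_d$, using a product over the irreducible components of the planar maps $h_a(y)=R_1(a\log|y|)\,y$. Each component is therefore untwisted according to its \emph{own} norm. The paper then notes that $h_a$ is $(1+|a|)$-Lipschitz with $h_a^{-1}=h_{-a}$ and concludes by transitivity.

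You instead write down the single map $h(x)=\Psi_{-\log|x|}\Phi_{\log|x|}x$, clocked by the global Euclidean norm. This uses only the skew-symmetry of $A+I_d$ and $B+I_d$. No decomposition or matching of blocks is needed, even when $k\ne\ell$. Your mean-value estimate $\bigl|\log|x|-\log|y|\bigr|\,|y|\le|x-y|$ (for $|y|\le|x|$) also actually proves the Lipschitz bound that the paper calls ``readily confirmed''. The recipe $x\mapsto\Psi_{-T(x)}\Phi_{T(x)}x$ is the one the paper later uses in Lemma~\ref{lem6_6}. There, Remark~\ref{rem6_7} shows that for Jordan blocks of size $m\ge 2$ it is no longer uniformly Lipschitz.

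The advantage of the paper's componentwise maps is reusability. Step~II of the proof of Theorem~\ref{thm1x} cites this proof to replace only the diagonal blocks $J_1(z_{k+j})$, whose real parts may differ, by ${\sf Re}\,z_{k+j}\,I$. It leaves the non-diagonal Jordan blocks and the central part untouched. Your global clock needs a single common real part on the whole space. To serve there, it would have to be applied separately to each group of diagonal blocks sharing a real part and then assembled as a product. That is easy, but it is an extra step.
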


\begin{proof}
Letting $\widetilde{\Phi}$ be the flow on $\R^d$ generated by $-I_d$,
it will be shown that $A$ is Lipschitz similar to $-I_d$ and that $\Phi \stackrel{1}{\cong}
\widetilde{\Phi}$. Once established, clearly this proves both
assertions of the lemma.

First, that $A$, $-I_d$ are Lipschitz similar follows directly from the
definition of $\cL$ since
$$
\cL A = \mbox{\rm diag}\, [\cL J_1(-1+ib_1), \ldots , \cL J_1(-1 +
ib_k)] = \mbox{\rm diag}\, [- I_{{\sf d}(b_1)}, \ldots , -I_{{\sf
    d}(b_k)} ] = - I_{{\sf d}(b)} \, .
$$

Next, let $h_0 = I_{\R}$, and for $a\in \R\setminus \{0\}$ let the map $h_a : \R^2 \to \R^2$
be given by
\begin{equation}\label{eq5_add20}
h_a(y) = \left\{
  \begin{array}{ll}
    y & \mbox{\rm if $y=0$ or $|y|=1$}\, , \\
    R_1(a\log|y|) \, y & \mbox{\rm otherwise} \, .
  \end{array}
    \right.
\end{equation}
It is readily confirmed that $h_a \in \cH_1 (\R^2)$, with $h_a^{-1} =
h_{-a}$ and
$$
|h_a(y) - h_a(z)| \le (1+|a|) |y-z|\qquad \forall y,z \in \R^2 \, .
$$
Moreover, for every $t\in \R\setminus \{0\}$, $y\in \R^2\setminus
\{0\}$,
$$
h_a \bigl(e^{tJ_1(-1+ia)} y\bigr) = h_a (e^{-t} R_1(at)\, y) = e^{-t} R_1(a
\log|e^{-t}y|) R_1(at) \, y = e^{-t} h_a(y) \, ,
$$
and the two outer-most expressions agree also if $t=0$ or $y=0$. In
summary, for every $a\in \R$,
$$
h_a\bigl(e^{tJ_1(-1+ia)}y\bigr) = e^{-t} h_a(y) \qquad \forall t \in \R, y \in
\R^{{\sf d}(a)} \, .
$$

Now, let $\R^d=\bigoplus_{j=1}^k E_j$ be the decomposition of $X=\R^d$ into
(mutually orthogonal) irreducible components, with $\dim E_j = {\sf
  d}(b_j)$ for every $j\in \{1, \ldots , k\}$, and denote by $P_j$ the
orthogonal projection of $\R^d$ onto $E_j$. Furthermore let $H_j : E_j \to
\R^{{\sf d}(b_j)}$ be an isometric isomorphism with $H_j \Phi_t x =
e^{t J_1(-1+ib_j)}H_j x$ for all $t\in \R$, $x\in E_j$. With this,
$$
\Phi_t x = \sum\nolimits_{j=1}^k \Phi_t P_j x = \sum\nolimits_{j=1}^k
H_j^{-1} e^{tJ_1(-1+ib_j)} H_j P_j x \qquad \forall t\in \R , x  \in  \R^d \, .
$$
Consider $h:\R^d \to  \R^d  $ given by $h = \bigtimes_{j=1}^k h_{b_j}$, that is,
$$
h(x) = \sum\nolimits_{j=1}^k H_j^{-1} h_{b_j} (H_j P_j x) 
\qquad \forall x \in \R^d  \, .
$$
Clearly, $h\in \cH_1(\R^d)$ and
\begin{align*}
h(\Phi_t x) & =\sum\nolimits_{j=1}^k H_j^{-1} h_{b_j} (H_j P_j \Phi_t
              x) =  \sum\nolimits_{j=1}^k H_j^{-1} h_{b_j} (e^{tJ_1(-1 + i
              b_j)} H_j P_j x)  \\
  & = \sum\nolimits_{j=1}^k H_j^{-1} \left( e^{-t} h_{b_j} (H_j P_j x)
    \right) = e^{-t} h(x) = \widetilde{\Phi}_t h(x) \qquad \forall
    t\in \R ,x \in \R^d \, . 
\end{align*}
In other words, $\Phi \stackrel{h}{\cong} \widetilde{\Phi}$, and hence $\Phi
\stackrel{1}{\cong}\widetilde{\Phi}$ as claimed.    
\end{proof}

In stark contrast to Lemma \ref{lem4_1}, for $m\ge 2$ the flows
$\Phi$, $\Psi$ can be Lipschitz equivalent only in the obvious way,
that is, only if their generators are (Lipschitz) similar.

\begin{lem}\label{lem4_2}
Given $m\in \N \setminus \{1\}$, $k,\ell\in \N$, and $b\in \R^k$, $c\in
\R^{\ell}$ with $m\, {\sf d}(b) = m\, {\sf d}(c) = d$, let $\Phi$, $\Psi$ be the flows on $\R^d$ generated by
$A$, $B$ in {\rm (\ref{eq4_1})} respectively. Then the following
statements are equivalent:
\begin{enumerate}
\item $\Phi \stackrel{{\sf lin}}{\cong}\Psi$;
\item $\Phi \stackrel{1}{\thicksim} \Psi$;
  \item $k=\ell$, and there exists a bijection $g:\{1, \ldots , k\}\to \{1, \ldots
    , \ell\}$ so that $|b_j| = |c_{g(j)}|$ for every $j$;
\item $A$, $B$ are similar;
\item $A$, $B$ are Lipschitz similar.    
\end{enumerate}
\end{lem}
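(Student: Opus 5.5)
We will establish the cycle (i)$\Rightarrow$(ii)$\Rightarrow$(iii)$\Rightarrow$(iv)$\Rightarrow$(i), and separately (iv)$\Leftrightarrow$(v). Of these, (i)$\Rightarrow$(ii) is trivial, since $\cH_{\sf lin}\subset \cH_1$ and conjugacy implies equivalence. The implication (iv)$\Rightarrow$(i) is likewise immediate: if $A = P^{-1}BP$ with $P$ invertible, then $h=P$ is linear and satisfies $P e^{tA} = e^{tB}P$ for all $t$, which is exactly (\ref{eq1_2}) with $h\in \cH_{\sf lin}$.

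The substantive implication is (ii)$\Rightarrow$(iii), and for it we invoke Lemma \ref{lem4_0}. The generators $A$, $B$ in (\ref{eq4_1}) are the special case of (\ref{eq4_10}) with $m_1=\ldots=m_k=n_1=\ldots=n_\ell=m$ and $k_0=\ell_0=0$. Hence $U_p=V_p=\varnothing$ for $p\ne m$, while $U_m=\{1,\ldots,k\}$ and $V_m=\{1,\ldots,\ell\}$. Because $m\ge 2$, part (ii) of Lemma \ref{lem4_0} with $p=m$ provides a bijection $g:U_m\to V_m$ with $|b_j|=|c_{g(j)}|$ for every $j$. In particular $k=\#U_m=\#V_m=\ell$, which is (iii). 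This is precisely the point where $m\ge 2$ is needed: for $m=1$, Lemma \ref{lem4_1} shows that no such matching of frequencies can be forced.

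For (iii)$\Rightarrow$(iv), we note that $J_m(-1+ib)$ and $J_m(-1-ib)$ are similar as real matrices, since both are real Jordan forms of a single complex Jordan block paired with its conjugate. A concrete conjugating matrix is the reflection $\mbox{\rm diag}\,[I_m,-I_m]$, which flips the sign of the off-diagonal blocks $\mp b I_m$ and leaves $J_m$ on the diagonal unchanged. Given the bijection $g$, we apply this blockwise similarity whenever $b_j=-c_{g(j)}\ne 0$, and then a permutation of blocks according to $g$. This shows $A$ and $B$ are similar.

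Finally, (iv)$\Rightarrow$(v) is the left-hand implication in (\ref{eq2z2}). For (v)$\Rightarrow$(iv), we use that $m\ge 2$ forces $\Phi$, $\Psi$ to be anti-diagonal, so $\cL A=A$ and $\cL B=B$ directly from (\ref{eq3_1a}). Lipschitz similarity then means exactly that $A$ and $B$ are similar; this is the reversibility remark following (\ref{eq2z2}).

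The only real obstacle is (ii)$\Rightarrow$(iii), and it has already been resolved by Lemma \ref{lem4_0}. What remains is the bookkeeping described above.
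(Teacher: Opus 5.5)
Your proposal is correct and follows essentially the same route as the paper. The one substantive step, (ii)$\Rightarrow$(iii), is obtained from Lemma~\ref{lem4_0}(ii) with $p=m$, using $U_m=\{1,\ldots,k\}$, $V_m=\{1,\ldots,\ell\}$; (v)$\Rightarrow$(iv) follows from $\cL J_m(z)=J_m(z)$ for $m\ge 2$; the remaining implications are the ones the paper calls clear, and your reflection $\mbox{\rm diag}\,[I_m,-I_m]$ for (iii)$\Rightarrow$(iv) simply makes that detail explicit.
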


\begin{proof}
Clearly, (i)$\Rightarrow$(ii) and
(iii)$\Rightarrow$(iv)$\Rightarrow$(v), as well as
(iv)$\Rightarrow$(i). By defintion, $\cL J_m(z) = J_m(z)$ for every $m\in
\N \setminus \{1\}$ and $z\in \C$, and hence also
(v)$\Rightarrow$(iv). Finally, Lemma \ref{lem4_0} shows that
(ii)$\Rightarrow$(iii) since $U_p = \{1, \ldots, k\}$, $V_p = \{1,
\ldots , \ell\}$ if $p=m \in \N\setminus \{1\}$, and $U_p = V_p =
\varnothing$ otherwise.
\end{proof}

At last, all the necessary tools have been assembled for a proof of the
main result of this article.

\begin{proof}[Proof of Theorem \ref{thm1x}]
Notice first that (iii)$\Leftrightarrow$(iv) by the definitions of
Lyapunov and Lipschitz similarity, and clearly
(ii)$\Rightarrow$(i). Thus it suffices to show that
(i)$\Rightarrow$(iii)$\Rightarrow$(ii), and this will now be done in
two separate steps. A third step then adresses the additional assertion regarding Lipschitz
conjugacy.

\medskip

\noindent
\underline{Step I:} Proof of (i)$\Rightarrow$(iii).

\smallskip

\noindent
Assume that $\Phi
\stackrel{1}{\thicksim}\Psi$. Then $\Phi
\stackrel{1^-}{\thicksim}\Psi$, and by Proposition \ref{prop1za} there
exists $\beta \in \R \setminus \{0\}$ so that $A^{\Phi}$, $\beta
A^{\Psi}$ are Lyapunov similar while $A^{\Phi_{\sf C}}$, $\beta
A^{\Psi_{\sf C}}$ are similar. Replacing $\Psi$ by $\Psi_{*\beta}$
otherwise, it can be assumed that $\beta = 1$; in other words,
$\Lambda^{\Phi} = \Lambda^{\Psi}$, while $A^{\Phi_{\sf C}}$,
$A^{\Psi_{\sf C}}$ are similar, and hence $d^{\Phi}_{\sf C} =
d^{\Psi}_{\sf C}$. 

Now, if $\Phi$ is central then
so is $\Psi$, and $A^{\Phi} = A^{\Phi_{\sf C}} $, $A^{\Psi}=
A^{\Psi_{\sf C}} $ are similar, so clearly (iii) holds. It remains to consider the case
where $\Phi$, $\Psi$ are {\em not\/} central. In this case,
in order to establish (iii), all that needs to be shown is that
$A^{\Phi_{\sf AD}}$, $A^{\Psi_{\sf AD}}$ are similar as well.
No assertion in the theorem is
affected at all by a linear change of coordinates. Without loss of
generality, therefore, assume that
\begin{align}\label{eq5_p1}
  A^{\Phi} & = \mbox{\rm diag} \, \bigl[
J_{m_1} (z_1), \ldots , J_{m_k} (z_k), J_1(z_{k+1}) , \ldots ,
             J_1(z_{k+k_0} )   , A^{\Phi_{\sf C}}
             \bigr] \, , \nonumber \\[-3mm]
           & \: \\[-3mm]
  A^{\Psi} & = \mbox{\rm diag} \, \bigl[
J_{n_1} (w_1), \ldots , J_{n_{\ell}} (w_{\ell} ), J_1(w_{\ell+1}) ,
             \ldots ,  J_1(w_{\ell+\ell_0} )  , A^{\Phi_{\sf C}}
             \bigr] \, , \nonumber 
  \end{align}
with the appropriate
\begin{align*}
  & k,k_0, \ell, \ell_0  \in \N_0\, , m_1, \ldots , m_k , n_1, \ldots , n_{\ell} \in
    \N \setminus \{1\}\, , \\
  & z_1, \ldots , z_{k+k_0}, w_1, \ldots ,
    w_{\ell+\ell_0}\in \{ z\in \C : {\sf Re}\, z\ne 0, {\sf Im}\, z\ge 0 \}
    \, ,
\end{align*}
so that $k+k_0 \ge 1$, $ \ell + \ell_0 \ge 1$ and
$$
\sum\nolimits_{j=1}^k m_j \, {\sf d}({\sf Im}\, z_j) + \sum\nolimits_{j=1}^{k_0}
{\sf d}({\sf Im}\, z_{k+j}) =
\sum\nolimits_{j=1}^{\ell} n_j\,  {\sf d}({\sf Im}\, w_j) + \sum\nolimits_{j=1}^{\ell_0}
{\sf d}({\sf Im}\, w_{\ell+j})  = d - d^{\Phi}_{\sf C}\, ;
$$
here $J_{m_j}(\cdot)$, $J_1 (\cdot)$, and $A^{\Phi_{\sf C}}$ are
understood to be present in $A^{\Phi}$ in (\ref{eq5_p1}) only if
$k\ge 1$, $k_0\ge 1$, and $d^{\Phi}_{\sf C}\ge 1$ respectively, and analogously for
$A^{\Psi}$.

Assume first that $\Phi$, $\Psi$ are stable. Then $d_{\sf
  C}^{\Phi}=0$, so $A^{\Phi_{\sf C}}$ is not present in
(\ref{eq5_p1}), and $z_j,w_j\in \C^-$ for all $j$. If $\Phi$ is
diagonal, then so is $\Psi$ by Proposition \ref{prop3aoo1} and Theorem
\ref{lem3aol1}, and there is nothing to prove because $\Phi_{\sf
  AD}$, $\Psi_{\sf AD}$ are trivial. Otherwise $k,\ell\ge1$ in (\ref{eq5_p1}), and 
$$
A^{\Phi_{\sf AD}} = \mbox{\rm diag} \, \bigl[
J_{m_1} (z_1), \ldots , J_{m_k} (z_k)             \bigr] \, , \quad
A^{\Psi_{\sf AD}} = \mbox{\rm diag} \, \bigl[
J_{n_1} (w_1), \ldots , J_{n_{\ell}} (w_{\ell})             \bigr] \, .
$$
Now, fix any (necessarily negative) Lyapunov exponent $s$. Observe
that
\begin{equation}\label{eq5_p2}
h \bigl( L^{\Phi_{*1/|s|}} (-1)\bigr) = h \bigl( L^{\Phi} (s)\bigr) =
L^{\Psi} (s) = L^{\Psi_{*1/|s|}} (-1) \, ,
\end{equation}
where the middle equality is due to Proposition \ref{prop2bb}. For
convenience, denote by $\widetilde{\Phi}$ the restriction of
$\Phi_{*1/|s|}$ to $\R \times L^{\Phi}(s)$ and similarly by $\widetilde{\Psi}$ the restriction of
$\Psi_{*1/|s|}$ to $\R \times L^{\Psi}(s)$. Clearly
$\widetilde{\Phi}\stackrel{1}{\thicksim} \widetilde{\Psi}$ by
(\ref{eq5_p2}). Permuting the Jordan blocks in (\ref{eq5_p1}) if
necessary, it can be assumed that
\begin{align}\label{eq5_p3}
  A^{\widetilde{\Phi}} & = \mbox{\rm diag}  \left[
J_{m_1} \left( \frac{z_1}{|s|}\right), \ldots , J_{m_{\widetilde{k}}}
                         \left(\frac{z_{\widetilde{k}}}{|s|}\right),
                         J_1\left(\frac{z_{k+1}}{|s|}\right) ,  \ldots ,
             J_1\left( \frac{z_{k+\widetilde{k}_0}}{|s|} \right)   , A_0            \right] \, , \nonumber \\[-2.5mm]
                       & \: \\[-2.5mm]
A^{\widetilde{\Psi}} & = \mbox{\rm diag}  \left[
J_{n_1} \left( \frac{w_1}{|s|}\right), \ldots , J_{n_{\widetilde{\ell}}}
                         \left(\frac{w_{\widetilde{\ell}}}{|s|}\right),
                         J_1\left(\frac{w_{\ell+1}}{|s|}\right) ,  \ldots ,
             J_1\left( \frac{w_{\ell+\widetilde{\ell}_0}}{|s|} \right)   , B_0            \right] \, , \nonumber 
\end{align}
with the appropriate
$0\le \widetilde{k} \le k, \, 0\le \widetilde{k}_0\le k_0 , \, 0\le
\widetilde{\ell}\le \ell $ and $ \,  0\le
\widetilde{\ell}_0\le \ell_0 $
so that
$$
{\sf Re}\, z_j = s \quad \forall j \in \bigl\{1, \ldots ,
\widetilde{k}\bigr\}\cup \bigl\{k+1, \ldots , k+ \widetilde{k}_0 \bigr\} \, , \qquad
{\sf Re}\, w_j = s \quad \forall j \in \bigl\{1, \ldots ,
\widetilde{\ell}\bigr\}\cup \bigl\{\ell+1, \ldots , \ell+ \widetilde{\ell}_0 \bigr\} \, ,
$$
and
\begin{align*}
\sum\nolimits_{j=1}^{\widetilde{k}} m_j \, {\sf d}({\sf Im}\, z_j) + \sum\nolimits_{j=1}^{\widetilde{k}_0}
{\sf d}({\sf Im}\, z_{k+j}) & =
\sum\nolimits_{j=1}^{\widetilde{\ell}} n_j \, {\sf d}({\sf Im}\, w_j) + \sum\nolimits_{j=1}^{\widetilde{\ell}_0}
                              {\sf d}({\sf Im}\, w_{\ell+j})  \\
  & =  \mbox{\rm dim}\, L^{\Phi}(s) -
\mbox{\rm dim}\, L^{\Phi} (s^-)\, ,
\end{align*}
where $J_{m_{\widetilde{k}}}(z_{\widetilde{k}}/|s|)$,
$J_1(z_{k+\widetilde{k}_0}/|s|)$, $J_{n_{\widetilde{\ell}}}(w_{\widetilde{\ell}}/|s|)$,
$J_1(w_{\ell+\widetilde{\ell}_0}/|s|)$ only appear in (\ref{eq5_p3})
if $\widetilde{k}, \widetilde{k}_0, \widetilde{\ell},
\widetilde{\ell}_0\ge 1$ respectively, and
$A_0,B_0\in \R^{\mbox{\footnotesize {\rm dim}}\, L^{\Phi} (s^-)
  \times \mbox{\footnotesize {\rm
    dim}}\, L^{\Phi} (s^-)}$ only appear if
$\mbox{\rm dim}\, L^{\Phi} (s^-)\ge 1$, in which case
$$
\lim\nolimits_{t\to\infty} e^t (|e^{tA_0}| + |e^{tB_0}|) = 0 \, .
$$
Note that
$$
\frac{z_j}{|s|} = - 1 +i \frac{{\sf Im}\, z_j}{|s|} \qquad \forall j \in \bigl\{1, \ldots ,
\widetilde{k}\bigr\}\cup \bigl\{k+1, \ldots , k+ \widetilde{k}_0
\bigr\} \, ,
$$
and similarly
$$
\frac{w_j}{|s|} = - 1 +i \frac{{\sf Im}\, w_j}{|s|} \qquad \forall j \in \bigl\{1, \ldots ,
\widetilde{\ell}\bigr\}\cup \bigl\{\ell+1, \ldots , \ell+ \widetilde{\ell}_0
\bigr\} \, .
$$
It follows from Lemma \ref{lem4_0} that $\widetilde{k} = \widetilde{\ell}$,
and if $\widetilde{k} = \widetilde{\ell}\ge 1$ then there exists a
bijection $g:\bigl\{1, \ldots , \widetilde{k}\bigr\}\to \bigl\{1,
\ldots , \widetilde{\ell} \bigr\}$ so that $J_{m_j} (z_j) =
J_{n_{g(j)}}(w_{g(j)})$ for every $j\in \bigl\{ 1, \ldots ,
\widetilde{k}\bigr\}$. In other words, the two matrices
$$
A^{\Phi_{\sf AD}}_{[s]}:= \mbox{\rm diag}\, [J_{m_j} (z_j): {\sf
  Re}\, z_j = s ] \, , \quad
A^{\Psi_{\sf AD}}_{[s]}:= \mbox{\rm diag}\, [J_{n_j} (w_j): {\sf
  Re}\, w_j = s ] 
$$
are similar, provided that $\widetilde{k} = \widetilde{\ell}\ge
1$. Repeating this argument for every Lyapunov exponent $s$ shows that
$A^{\Phi_{\sf AD}}$, $A^{\Psi_{\sf AD}}$ are similar as well. As
discussed earlier, this proves that (i)$\Rightarrow$(iii) for stable
flows.

In general, i.e., if $\Phi$, $\Psi$ are not stable, applying the same
argument to $\Phi_{\sf S}$, $\Psi_{\sf S}$ shows that $A^{(\Phi_{\sf
    AD})_{\sf S}}$, $A^{(\Psi_{\sf
    AD})_{\sf S}}$ are similar whenever non-trivial, and applying
it to $(\Phi_{\sf U})^*$, $(\Psi_{\sf U})^*$ shows that $A^{(\Phi_{\sf
    AD})_{\sf U}}$, $A^{(\Psi_{\sf
    AD})_{\sf U}}$ are similar whenever non-trivial. Since
$A^{\Phi_{\sf C}}$, $A^{\Psi_{\sf C}}$ are similar, obviously $A^{(\Phi_{\sf
    AD})_{\sf C}}$, $A^{(\Psi_{\sf
    AD})_{\sf C}}$ are similar as well. In summary, therefore,
$A^{\Phi_{\sf AD}}$, $A^{\Psi_{\sf AD}}$ are similar, and (iii) holds.

\medskip

\noindent
\underline{Step II:} Proof of (iii)$\Rightarrow$(ii).

\smallskip

\noindent
Assuming (iii), one may again take $\beta = 1$, so $A^{\Phi}$,
$A^{\Psi}$ are Lipschitz similar while $A^{\Phi_{\sf C}}$, $A^{\Psi_{\sf C}}$ are similar. As before, if
$\Phi$ is central then so is $\Psi$, in which case $A^{\Phi}$,
$A^{\Psi}$ are similar, and consequently $\Phi \stackrel{{\sf
    lin}}{\cong} \Psi$, so clearly (ii) holds. Thus it remains to
establish (ii) assuming that $\Phi$, $\Psi$ are {\em not\/} central. In this
case, (\ref{eq5_p1}) remains valid, and due to the similarity of
$A^{\Phi_{\sf AD}}$, $A^{\Psi_{\sf AD}}$ it
can furthermore be assumed that $k=\ell$ as well as $m_j = n_j$ and
$z_j = w_j$ for all $j\in \{1, \ldots , k\}$. As seen in the proof of
Lemma \ref{lem4_1}, $\Phi \stackrel{1}{\cong}\widetilde{\Phi}$ with
$$
A^{\widetilde{\Phi}} = \mbox{\rm diag} \, \bigl[
J_{m_1} (z_1), \ldots , J_{m_k} (z_k), {\sf Re}\,  z_{k+1} I_{{\sf d}
  ({\sf Im}\,  z_{k+1})} , \ldots ,   {\sf Re}\,  z_{k+k_0} I_{{\sf d}
  ({\sf Im}\,  z_{k+k_0})} , A^{\Phi_{\sf C}}
\bigr] \, ,
$$
and similarly $\Psi \stackrel{1}{\cong}\widetilde{\Psi}$ with
$$
A^{\widetilde{\Psi}} = \mbox{\rm diag} \, \bigl[
J_{m_1} (z_1), \ldots , J_{m_k} (z_k), {\sf Re}\,  w_{k+1} I_{{\sf d}
  ({\sf Im}\,  w_{k+1})} , \ldots ,   {\sf Re}\,  w_{k+{\ell}_0} I_{{\sf d}
  ({\sf Im}\,  w_{k+{\ell}_0})} , A^{\Phi_{\sf C}}
\bigr] \, ,
$$
Necessarily, therefore,
$$
\sum\nolimits_{j=1}^{k_0} {\sf d} ({\sf Im}\,  z_{k+j}) =
\sum\nolimits_{j=1}^{\ell_0} {\sf d} ({\sf Im}\,  w_{k+j}) =: d_0 \, .
$$
On the one hand, if $d_0=0$ then $A^{\widetilde{\Phi}} =
A^{\widetilde{\Psi}} $, so clearly $\widetilde{\Phi} \stackrel{{\sf
    lin}}{\cong} \widetilde{\Psi}$, and hence also $\Phi
\stackrel{1}{\cong} \Psi$. On the other hand, if $d_0\ge 1$ then
$\Lambda^{\widetilde{\Phi}} = \Lambda^{\Phi} = \Lambda^{\Psi} =
\Lambda^{\widetilde{\Psi}}$. As such, the two $d_0\times
d_0$-matrices
$$
\mbox{\rm diag} \, \bigl[
 {\sf Re}\,  z_{k+1} I_{{\sf d}
  ({\sf Im}\,  z_{k+1})} , \ldots ,   {\sf Re}\,  z_{k+k_0} I_{{\sf d}
  ({\sf Im}\,  z_{k+k_0})} \bigr] \, , \quad
\mbox{\rm diag} \, \bigl[
 {\sf Re}\,  w_{k+1} I_{{\sf d}
  ({\sf Im}\,  w_{k+1})} , \ldots ,   {\sf Re}\,  w_{k+\ell_0} I_{{\sf d}
  ({\sf Im}\,  w_{k+\ell_0})} \bigr]
$$
are Lyapunov similar, and hence in fact the same up to a permutation of the diagonal entries. Thus
$A^{\widetilde{\Phi}}$, $A^{\widetilde{\Psi}} $ are similar, so again $\widetilde{\Phi} \stackrel{{\sf
    lin}}{\cong} \widetilde{\Psi}$ and $\Phi
\stackrel{1}{\cong} \Psi$. In summary, the validity of
(iii) with $\beta = 1$ yields $\Phi
\stackrel{1}{\cong} \Psi$. As seen earlier, this completes the proof
of (iii)$\Rightarrow$(ii).

\medskip

\noindent
\underline{Step III:} Characterizing Lipschitz conjugacy.

\smallskip

\noindent
The ``if'' part of the ``Moreover \dots'' statement has just been
established in the previous step. To prove the
``only if'' part, assume that $\Phi \stackrel{1}{\cong}\Psi$. By
Proposition \ref{prop1za}, $A^{\Phi_{\sf C}}$, $A^{\Psi_{\sf C}}$ are
similar. On the one hand, if $\Phi$ is central then so is $\Psi$, in
which case $A^{\Phi}=A^{\Phi_{\sf C}}$, $A^{\Psi} = A^{\Psi_{\sf C}}$
are similar hence Lipschitz similar. On the other hand, if $\Phi$,
$\Psi$ are not central, note that $\Phi_{\bullet} \stackrel{1}{\cong}
\Psi_{\bullet}$ for $\bullet \in \{{\sf S}, {\sf U}\}$ via
$h|_{X_{\bullet}^{\Phi}}$, provided that $X_{\bullet}^{\Phi},
X_{\bullet}^{\Psi} \ne \{0\}$. By what has been proved previously,
there exists $\alpha_{\bullet}\in \R \setminus \{0\}$ so that
$A^{\Phi_{\bullet}}$, $\alpha_{\bullet} A^{\Psi_{\bullet}}$ are
Lipschitz similar, and Theorem \ref{lem3aol1} yields
$\alpha_{\bullet}=1$. Thus $A^{\Phi_{\bullet}}$, $A^{\Psi_{\bullet}}$
are Lipschitz similar whenever $X_{\bullet}^{\Phi},
X_{\bullet}^{\Psi} \ne \{0\}$. Since $A^{\Phi_{\sf C}}$, $A^{\Psi_{\sf
    C}}$ are Lipschitz similar as well, so are $A^{\Phi}$, $A^{\Psi}$.
\end{proof}

The remainder of this section briefly discusses the relation of
Theorem \ref{thm1x} to its differentiable and H\"{o}lder counterparts,
as well as its straightforward extension to complex spaces.

For every $A\in \R^{d\times d}$ and each $\bigstar\in \{0,1^{-}, 1, {\sf diff}, {\sf
  lin}\}$ let
$$
[A]_{\bigstar} = \left\{ B\in \R^{d\times d} : \Phi
\stackrel{\bigstar}{\thicksim} \Psi \enspace \mbox{\rm with $\Phi$,
  $\Psi$ generated by $A$, $B$ respectively} \right\} \, .
$$
Thus $[A]_{\bigstar}$ simply is the (equivalence) class of $A$
induced by $\stackrel{\bigstar}{\thicksim}$, and
$\bigl\{ [A]_{\bigstar}: A \in \R^{d\times d} \bigr\}$ is the
associated partition of $\R^{d\times d}$. Taking $d=2$ for instance,
there are precisely {\em six\/} different topological classes, namely
$$
[O_2]_0, \: [J_2]_0, \: [J_1(i)]_0,\:  \bigl[ \, \mbox{\rm diag}\, [-1,1]
\bigr]_0,  \: \bigl[ \, \mbox{\rm diag}\, [0,1]
\bigr]_0 \quad \mbox{\rm and} \quad  [I_2]_0 \, ;
$$
see also \cite{BW, BW2, Kuiper, Ladis, MM}. By
contrast, the different Lipschitz classes for $d=2$ are
$$
[O_2]_1, \: [J_2]_1,\: [J_1(i)]_1,\:  [J_2(1)]_1 \quad \mbox{\rm and}
\quad \bigl[ \, \mbox{\rm diag}\, [a,1]
\bigr]_1 \: \mbox{\rm with} \:  |a|\le 1 \, .
$$
In general, note that $[A]_{\sf lin} = [A]_{\sf
  diff}$ for every $A\in \R^{d\times d}$ by Proposition
\ref{prop1zb}, and clearly
\begin{equation}\label{eq5r1}
[A]_{\sf diff} \subset [A]_1\subset [A]_{1^-} \subset [A]_0 \qquad
\forall A \in \R^{d\times d} \, .
\end{equation}
While trivially all four classes coincide for $d=1$, every inclusion
in (\ref{eq5r1}) may be strict for $d\ge 2$,
as the example of $A=I_d$ illustrates, for which
\begin{align*}
& [I_d]_{\sf diff}  = \bigl\{ \alpha I_d : \alpha \in \R \setminus \{0\}\bigr\} \,
        , \\
&  [I_d]_{1}  = \bigcup\nolimits_{\alpha \in \R \setminus \{0\}}
              \bigl\{
B \in \R^{d\times d} : B \: \mbox{\rm diagonalizable (over $\C$)}, \sigma(B) \subset \alpha + i \R
              \bigr\} \, , \\
 &   [I_d]_{1^-}  = \bigcup\nolimits_{\alpha \in \R \setminus \{0\}}
              \bigl\{
B \in \R^{d\times d} : \sigma(B) \subset \alpha + i \R
                  \bigr\} \, , \\
  & [I_d]_0  = \bigl\{ B \in \R^{d\times d} : \sigma(B) \subset
            \C^-\bigr\} \cup \bigl\{ B \in \R^{d\times d} : \sigma(B) \subset
            \C^+\bigr\} \, .
\end{align*}

It has been suggested in the literature that the Lipschitz equivalence (or
conjugacy) of two linear flows $\Phi$, $\Psi$ is, in some sense,
``close'' to their differentiable equivalence (or conjugacy). This
sentiment is explicitly expressed in \cite{KS} where it motivates
the usage of Rademacher's theorem, but may also be detected, e.g.,
in \cite{ACK1, Willems}. Via an analysis of the left and middle
inclusions in (\ref{eq5r1}), the sentiment can be put in perspective
and made more precise. To this end, denote by $\cG_d$ the set of all
$A\in \R^{d\times d}$ with the property that
$$
\sigma(A)\cap i \R = \varnothing \, , \quad \# \sigma (A) = d \, , \quad
\mbox{\rm and} \quad
\# \bigl( \sigma(A) \cap (\alpha + i\R) \bigr) \le 2 \quad \forall
\alpha \in \R \setminus \{0\} \, .
$$
Note that every $A\in \cG_d$ is diagonalizable (over $\C$). Also, $\cG_d$ is open
and dense in $\R^{d\times d}$, whereas $\R^{d\times d}\setminus \cG_d$ is a
nullset. Informally, therefore, $A\in \cG_d$ for every ``typical''
real $d\times d$-matrix $A$, both in a topological and a measure-theoretical
sense. Equality in (\ref{eq5r1}) of the differentiable, Lipschitz, and
H\"{o}lder classes is easily characterized whenever $A\in
\cG_d$.

\begin{prop}\label{prop5PO}
  For every $A\in \cG_d$ the following statements are equivalent:
  \begin{enumerate}
  \item $[A]_{\sf diff} = [A]_1$;
  \item $[A]_1 = [A]_{1^-}$;
  \item $\sigma(A)\subset \R$.
\end{enumerate}
\end{prop}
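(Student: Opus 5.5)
The plan is to compute all three classes of $A\in\cG_d$ explicitly from Theorem \ref{thm1x}, Proposition \ref{prop1zb} and Proposition \ref{prop1za}, and then to compare them. Since $A\in\cG_d$ is hyperbolic, $A^{\Phi_{\sf C}}$ is trivial, so the central-part conditions in the three classifications are vacuous. Since $A$ is diagonalizable, its anti-diagonal part is trivial. Moreover, for any $B$ in $[A]_1$ or $[A]_{1^-}$, $B$ is hyperbolic and $\Lambda^{\gamma B}=\Lambda^A$ for some $\gamma\ne0$.

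Consider first Lipschitz similarity, i.e. whether $\cL(\gamma B)$ is similar to $\cL A$. Because $A$ is diagonalizable, $\cL A=\Lambda^A$ up to similarity. For $\cL(\gamma B)$ to be similar to a diagonal matrix, $\gamma B$ must have no Jordan blocks of size $\ge2$, since $\cL$ leaves such blocks unchanged. Hence
$$
[A]_1=\{B:\ B \text{ diagonalizable (over } \C),\ \Lambda^{\gamma B}=\Lambda^A \text{ for some } \gamma\ne0\}\, ,
$$
while $[A]_{1^-}$ is the same set with diagonalizability dropped. Finally, $[A]_{\sf diff}=\{B:\gamma B \text{ similar to } A\}$.

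For (iii)$\Rightarrow$(i) and (iii)$\Rightarrow$(ii), assume $\sigma(A)\subset\R$. Then $A$ has $d$ distinct nonzero real eigenvalues. Any $B\in[A]_{1^-}$ has $\Lambda^{\gamma B}=\Lambda^A$, so its Lyapunov exponents are $d$ distinct values, each of multiplicity one. An eigenvalue of $\gamma B$ that is non-real with real part $a$ would produce the exponent $a$ with multiplicity at least $2$; this is impossible, so $\sigma(\gamma B)$ is real. For the same reason there are no repeated eigenvalues, so $\gamma B$ has the $d$ simple eigenvalues $\lambda_j^A$ and is therefore similar to $A$. Thus $[A]_{1^-}\subset[A]_{\sf diff}$, and together with (\ref{eq5r1}) all three classes coincide.

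For the converses, suppose $z=a+ib\in\sigma(A)$ with $b\ne0$. By the definition of $\cG_d$, the eigenvalues of $A$ on $a+i\R$ are exactly $z$ and $\overline z$. To refute (i), let $B$ be obtained from the real normal form of $A$ by replacing the block $J_1(a+ib)$ with $J_1(a+ib')$, where $b'>0$, $b'\ne|b|$, and $b'$ is chosen so that $\gamma B$ is not similar to $A$ for any $\gamma$. Such a $b'$ exists because $\gamma$ is forced to be $1$: $\Lambda^{\gamma B}=\Lambda^A$ with $\Lambda^A\ne0$ determines $\gamma$ (hyperbolicity and scaling). Then $B$ is diagonalizable with $\Lambda^B=\Lambda^A$, so $B\in[A]_1\setminus[A]_{\sf diff}$. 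To refute (ii), instead replace $J_1(a+ib)$ by $J_2(a)$. This does not change $\Lambda$, so the new $B$ lies in $[A]_{1^-}$; but $B$ is not diagonalizable, so $B\notin[A]_1$.

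The main obstacle is the bookkeeping in the description of $[A]_1$. One has to check carefully that Lipschitz similarity to a diagonalizable matrix forces diagonalizability, and that the scalar $\gamma$ is pinned down; the latter needs $\Lambda^A\ne0$, which holds since $A$ is hyperbolic and $d\ge1$. Everything else is direct from the cited classifications.
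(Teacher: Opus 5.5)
The paper gives no proof of this proposition; it only says the characterization is ``easily'' obtained. Your route is clearly the intended one: read off $[A]_{\sf diff}$, $[A]_1$, $[A]_{1^-}$ from Proposition \ref{prop1zb}, Theorem \ref{thm1x} and Proposition \ref{prop1za}, as the paper does explicitly for $A=I_d$. Your descriptions of the three classes are correct, and so is the argument for (iii)$\Rightarrow$(i),(ii). The refutation of (ii) via the replacement $J_1(a+ib)\to J_2(a)$ is also correct.

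There is one false step, in the refutation of (i), and you repeat it in your final paragraph. The conditions $\Lambda^{\gamma B}=\Lambda^A$ and $\Lambda^A\ne 0$ do not force $\gamma=1$. Comparing the largest absolute values of the exponents only gives $|\gamma|=1$. Whenever the Lyapunov spectrum of $A$ is symmetric about $0$, $\gamma=-1$ is admissible. For example, $A=\mbox{\rm diag}\,[J_1(-1+i),J_1(1+2i)]$ lies in $\cG_4$ and satisfies $\Lambda^{-A}=\Lambda^A$. So you must also rule out $-B$ being similar to $A$.

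This is easy to fix. Both $-B$ and $A$ have $d$ simple eigenvalues, so $-B$ is similar to $A$ if and only if $\sigma(-B)=\sigma(A)$, and this requires $-a\pm ib'\in\sigma(A)$. It therefore suffices to choose $b'$ outside the finite set $\{|b|\}\cup\{|{\sf Im}\, w| : w\in\sigma(A),\ {\sf Re}\, w=-a\}$.

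In fact your original condition $b'\ne|b|$ already suffices, though not for the reason you gave. Compare $\sigma(-B)$ and $\sigma(A)$ on the two distinct lines $a+i\R$ and $-a+i\R$ (distinct since $a\ne 0$). The line $a+i\R$ forces $\sigma(A)\cap(-a+i\R)=\{-a\pm ib\}$. The line $-a+i\R$ forces $\sigma(A)\cap(-a+i\R)=\{-a\pm ib'\}$. Hence $b'=|b|$.

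With this repair the proof is complete.
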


Letting $\cG_d^{\dagger} = \{A\in \cG_d : \sigma(A)\subset \R\}$ for
convenience, note that $\cG_d^{\dagger}$ is non-empty and open. Clearly
$\cG_1^{\dagger} = \cG_1$, whereas $\cG_d \setminus \cG_d^{\dagger}$
is non-empty and open as well whenever $d\ge 2$. By Proposition \ref{prop5PO},
$$
[A]_{\sf diff} = [A]_1 = [A]_{1^-} \quad \forall A \in
\cG_d^{\dagger} \qquad \mbox{\rm and} \qquad 
[A]_{\sf diff} \ne  [A]_1 \ne  [A]_{1^-} \quad \forall A \in \cG_d \setminus
\cG_d^{\dagger} \, .
$$
Informally, therefore, the Lipschitz class of a ``typical'' generator,
both in a topological and a measure-theoretical sense, coincides with
its differentiable counterpart precisely as often as with its
H\"{o}lder counterpart. Moreover, notice that
$$
[A]_1 \cap \cG_d = [A]_{1^-} \cap \cG_d \qquad \forall A \in \cG_d \, .
$$
Thus, the classes $[A]_1$, $[A]_{1^-}$ are {\em
  essentially the same\/} even when not truly identical. Compare this
to
$$
[A]_{\sf diff} \cap \cG_d \ne [A]_1\cap \cG_d \qquad \forall A \in
\cG_d \setminus \cG_d^{\dagger} \, ,
$$
where in fact the set on the left is a nowhere dense nullset within the
set on the right. In other words, the classes $[A]_{\sf diff}$, $[A]_1$
are {\em drastically\/} different whenever $A\in \cG_d\setminus
\cG_d^{\dagger}$. This state of affairs prevails even for the example
$A=I_d$ above, despite the fact $I_d \not \in \cG_d$. All this suggests that while
Lipschitz equivalence is, unsurprisingly, situated ``between''
differentiable and H\"{o}lder equivalence, it is, if anything,
``closer'' to the latter than to the former. Figure \ref{fig52}
illustrates this discrepancy for $d=2$. 

\begin{figure}[ht] 
  \psfrag{tdet}[l]{$\det A$}
  \psfrag{ttr}[]{$\mbox{\rm trace} \, A$}
  \psfrag{tc}[]{ $\frac14 (\mbox{\rm trace} \, A)^2$}
\psfrag{tt1}[]{$[A]_1= [A]_{1^-}$}
\psfrag{tt2}[]{$[A]_{\sf diff} = [A]_1$}
\psfrag{tt2a}[]{$[A]_{1^-} \cap \cG_2$}
\psfrag{tt2b}[]{$=$}
\psfrag{tt3}[]{$[A]_{\sf diff} \ne [A]_1$}
\psfrag{tt3a}[]{$[A]_{\sf diff} \cap \cG_2$}
\psfrag{tt3b}[]{$\ne $}
\psfrag{tt3c}[]{$[A]_1\cap \cG_2$}
\psfrag{tg2p}[]{$\cG_2 \setminus \cG_2^{\dagger}$}
\psfrag{tg2pa}[]{$ \cG_2^{\dagger}$}
\psfrag{to2}[]{$O_2, J_2$}
\psfrag{tj1i}[]{$J_1(i)$}
\psfrag{tdm11}[l]{$\mbox{\rm diag}\, [-1,1]$}
\psfrag{td01}[l]{$\mbox{\rm diag}\, [0,1]$}
\psfrag{ti2}[]{$I_2$}
%
%
\begin{center}
\includegraphics{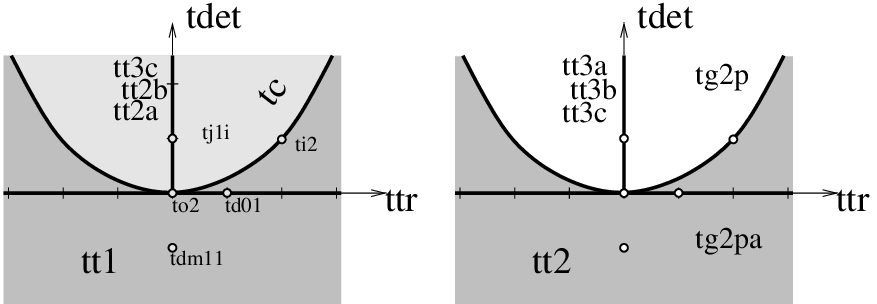}
\end{center}
\vspace*{-4mm}
\caption{The differentiable, Lipschitz,
  and H\"{o}lder classes of $A\in \R^{2\times 2}$ coincide for $A\in
  \cG_2^{\dagger}$ yet differ for $A\in \cG_2 \setminus
  \cG_2^{\dagger}$. Whenever $A\in \cG_2 \setminus
  \cG_2^{\dagger}$, the class $[A]_1$ resembles $[A]_{1^-}$ much more
  (left) than it resembles $[A]_{\sf diff}$. Notice that all matrices in $\R^{2\times
    2}\setminus \cG_2$ correspond to points on the solid
  black curves. Circles indicate the {\em six\/} different
  classes $[A]_0$, with $O_2$, $J_2$ corresponding to the same
  point.}\label{fig52}
\end{figure}

\subsection*{Lipschitz equivalence on complex spaces}

The analysis of $\Phi \stackrel{1}{\thicksim} \Psi$ thus far has
focussed entirely on {\em real\/} flows. As outlined here, it can
easily be extended to linear flows on arbitrary
finite-dimensional normed spaces; see also \cite{BW, BW2, Ladis}. To
this end, let $(X, \|\cdot\|)$ be a finite-dimensional normed space
with $X \ne \{0\}$
over $\K = \R$
or $\K = \C$. Denote by $X^{\R}$ the {\bf realification} of $X$, i.e.,
$X^{\R}$ equals $X$ as a set but is a linear space with the field of
scalars restricted to $\R$, and define $\iota_X : X \to X^{\R}$ as $\iota_X(x)
= x$. Thus, if $\K = \C$ then $\iota_X$ is an $\R$-linear bijection
and $\dim X^{\R} = 2\dim X$; moreover, $\|\cdot\|_{X^{\R}} :=
\|\cdot\|\circ \iota_X^{-1}$ is a norm on $X^{\R}$, and $\iota_X$ is
an isometry. (Trivially, if $\K = \R$ then $X^{\R} = X$ as linear spaces and $\iota_X = I_X$.)
Every map $h:X\to X$ induces a map $h^{\R} = \iota_X \circ h \circ
\iota_X^{-1}: X^{\R} \to X^{\R}$, and clearly
\begin{equation}\label{eq71}
h \in \cH_{1} (X) \quad \Longleftrightarrow \quad h^{\R} \in
\cH_{1} (X^{\R}) \qquad \forall h\in \cH_0 (X) \, .
\end{equation}
By contrast, note that with $J_X:= (iI_X)^{\R}$, 
\begin{equation}\label{eq72}
h \in \cH_{{\sf diff}} (X) \quad \Longleftrightarrow \quad h^{\R} \in
\cH_{{\sf diff}} (X^{\R}) \enspace \mbox{\rm and} \enspace J_X D_0
h^{\R} = ( D_0 h^{\R})  J_X \qquad \forall h\in \cH_0 (X) \, .
\end{equation}
Given any (not necessarily linear) flow $\varphi$ on $X$, its
realification $\varphi^{\R}$ is the flow on $X^{\R}$ with
$(\varphi^{\R})_t = (\varphi_t)^{\R}$ for all $t\in \R$. By (\ref{eq71}),
given two flows $\varphi$, $\psi$ on $X$, observe that $\varphi
\stackrel{1}{\thicksim} \psi $ precisely if $\varphi^{\R}
\stackrel{1}{\thicksim} \psi^{\R} $, 
and similarly for $\varphi \stackrel{1}{\cong} \psi$. For a $\K$-linear flow
$\Phi$ on $X=\K^d$, it is readily seen that all the dynamical objects
associated with $\Phi$ that have been studied in earlier sections behave
naturally under realification: For instance, $A^{\Phi^{\R}} =
(A^{\Phi})^{\R}$, and $X_{\bullet}^{\Phi^{\R}} =
(X_{\bullet}^{\Phi})^{\R} = \iota_X (X_{\bullet}^{\Phi})$ for every $\bullet
\in \{ {\sf D}, {\sf AD}, {\sf S}, {\sf C}, {\sf U}, {\sf H}\}$, as well as
$(\Phi_{\bullet})^{\R} = (\Phi^{\R})_{\bullet}$. Also, if $\K = \C$
then $\lambda_{2j-1}^{\Phi^{\R}} = \lambda_{2j}^{\Phi^{\R}} =
\lambda_j^{\Phi}$ for every $j\in \{1, \ldots, d\}$. With this, the
Lipschitz classification of $\K$-linear flows follows 
immediately from Theorem \ref{thm1x}; it turns out to be a {\em
  real\/} result, in the sense that whether or not $\Phi 
\stackrel{1}{\thicksim} \Psi$ is determined completely by the associated
realifications $\Phi^{\R}$, $\Psi^{\R}$. 

\begin{theorem}\label{thm71}
Let $\Phi$, $\Psi$ be $\K$-linear flows on $X$. Then the following
statements are equivalent:
\begin{enumerate}
\item $\Phi \stackrel{1}{\thicksim}\Psi$;
  \item there exists $\alpha\in \R\setminus \{0\}$ so that $\Phi \stackrel{1}{\cong}\Psi_{*\alpha}$;
  \item $\Phi^{\R} \stackrel{1}{\thicksim} \Psi^{\R}$;
      \item there exists $\beta\in \R\setminus \{0\}$ so that $\Phi^{\R} \stackrel{1}{\cong}\Psi^{\R}_{*\beta}$;
\item there exists $\gamma \in \R \setminus \{0\}$ so that
$A^{\Phi^{\R}}$, $\gamma A^{\Psi^{\R}}$ are Lipschitz similar while
$A^{\Phi_{\sf C}^{\R}}$, $\gamma A^{\Psi_{\sf C}^{\R}}$ are similar.
\end{enumerate}
Moreover, $\Phi \stackrel{1}{\cong}\Psi$ if and only if $\Phi^{\R} \stackrel{1}{\cong}\Psi^{\R}$
 if and only if $A^{\Phi^{\R}}$, $A^{\Psi^{\R}}$ are Lipschitz similar while
$A^{\Phi_{\sf C}^{\R}}$, $A^{\Psi_{\sf C}^{\R}}$ are similar.
\end{theorem}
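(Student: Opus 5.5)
The plan is to reduce everything to Theorem \ref{thm1x} applied to the realifications, using the observations made just before the statement. First I establish the purely formal equivalences (i)$\Leftrightarrow$(iii) and (ii)$\Leftrightarrow$(iv). By (\ref{eq71}), a homeomorphism $h\in\cH_0(X)$ lies in $\cH_1(X)$ precisely if $h^{\R}\in\cH_1(X^{\R})$. Moreover, since $\iota_X$ is a bijection, $h(\Phi_{\R}x)=\Psi_{\R}h(x)$ for all $x\in X$ holds if and only if $h^{\R}(\Phi^{\R}_{\R}\iota_X x)=\Psi^{\R}_{\R}h^{\R}(\iota_X x)$ for all $x$. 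Hence $\Phi\stackrel{h}{\thicksim}\Psi$ if and only if $\Phi^{\R}\stackrel{h^{\R}}{\thicksim}\Psi^{\R}$. Conversely, every $\widetilde h\in\cH_1(X^{\R})$ has the form $h^{\R}$ with $h=\iota_X^{-1}\circ\widetilde h\circ\iota_X$. The same reasoning applies verbatim to conjugacies. Finally, $(\Psi_{*\alpha})^{\R}=(\Psi^{\R})_{*\alpha}$ because $\alpha$ is real and $A^{\Psi^{\R}}=(A^{\Psi})^{\R}$. Together these give (i)$\Leftrightarrow$(iii), (ii)$\Leftrightarrow$(iv), and also $\Phi\stackrel{1}{\cong}\Psi\Leftrightarrow\Phi^{\R}\stackrel{1}{\cong}\Psi^{\R}$.

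Next, $\Phi^{\R}$ and $\Psi^{\R}$ are real linear flows on the real space $X^{\R}$. Choosing an $\R$-basis identifies $X^{\R}$ with some $\R^{d'}$, and the paper notes that all notions involved are norm-independent. So Theorem \ref{thm1x} applies directly to $\Phi^{\R},\Psi^{\R}$. It yields (iii)$\Leftrightarrow$(iv)$\Leftrightarrow$(v), once the central parts are correctly identified. For this I use $(\Phi_{\sf C})^{\R}=(\Phi^{\R})_{\sf C}$, so that $A^{\Phi^{\R}_{\sf C}}$ in (v) is exactly the generator of the central part of $\Phi^{\R}$ appearing in Theorem \ref{thm1x}(iv). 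The same theorem's ``Moreover'' clause gives $\Phi^{\R}\stackrel{1}{\cong}\Psi^{\R}$ if and only if $A^{\Phi^{\R}}$, $A^{\Psi^{\R}}$ are Lipschitz similar and $A^{\Phi^{\R}_{\sf C}}$, $A^{\Psi^{\R}_{\sf C}}$ are similar. Combined with the first paragraph, this completes the final assertion.

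The only step requiring real care is the compatibility of realification with the spectral decomposition, namely $X_{\sf C}^{\Phi^{\R}}=\iota_X(X_{\sf C}^{\Phi})$. I verify it from the dynamical characterization of $X_{\sf C}^{\Phi}$ as the set of $x$ with $e^{-\varepsilon|t|}\Phi_tx\to0$ for every $\varepsilon>0$. This condition involves only norms of orbits, and $\iota_X$ is an isometry intertwining $\Phi_t$ with $\Phi^{\R}_t$, so it is unchanged under realification. Everything else is bookkeeping, so I expect no genuine obstacle.
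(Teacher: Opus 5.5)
Your proposal is correct and takes the same route as the paper. It obtains (i)$\Leftrightarrow$(iii) and (ii)$\Leftrightarrow$(iv) from (\ref{eq71}) together with the fact that $\iota_X$ is an isometric bijection intertwining the flows, and then applies Theorem \ref{thm1x} to $\Phi^{\R}$, $\Psi^{\R}$ to get (iii)$\Leftrightarrow$(iv)$\Leftrightarrow$(v) and the conjugacy clause; your extra checks ($(\Psi_{*\alpha})^{\R}=(\Psi^{\R})_{*\alpha}$ for real $\alpha$, and $X_{\sf C}^{\Phi^{\R}}=\iota_X(X_{\sf C}^{\Phi})$) just spell out compatibility facts the paper asserts immediately before the theorem.
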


\begin{proof}
Obviously (ii)$\Rightarrow$(i) and (iv)$\Rightarrow$(iii) by
definition, but also (i)$\Leftrightarrow$(iii) and
(ii)$\Leftrightarrow$(iv) by (\ref{eq71}). Furthermore, Theorem
\ref{thm1x} shows that
(iii)$\Leftrightarrow$(iv)$\Leftrightarrow$(v), and so all five
statements are equivalent. The claim regarding $\Phi
\stackrel{1}{\cong}\Psi$ follows similarly.
\end{proof}

\begin{rem}\label{rem73}
As detailed in \cite[Sec.\ 7]{BW2}, the H\"{o}lder counterpart of Theorem \ref{thm71},
i.e., the extension of Proposition \ref{prop1za} to any
finite-dimensional normed space,
also is a real theorem in the above sense. By contrast, the corresponding extension of Proposition
\ref{prop1zb} is {\em not\/}
a real theorem: Using (\ref{eq72}), it is readily seen that $\Phi^{\R} \stackrel{{\sf
    diff}}{\thicksim}\Psi^{\R}$ is necessary but not in general
sufficient for $\Phi\stackrel{{\sf
    diff}}{\thicksim}\Psi$; see \cite[Sec.\ 6]{BW}.
\end{rem}


\section{Pointwise Lipschitz equivalence at $0$}\label{sec6}

This final section introduces and briefly discusses the novel concept
of pointwise Lipschitz equivalence. Though arguably also of
independent interest, this concept is introduced here solely to highlight
several subtle aspects of the Lipschitz classification of linear flows
that may otherwise go unnoticed.

Given the pivotal role of the point $0$ in the analysis in
previous sections, it is natural to ask how the main results of
this article would be affected if, for instance, the homeomorphism
$h$ in (\ref{eq1_1}) or (\ref{eq1_2}), and its inverse, were assumed
to satisfy only a pointwise Lipschitz condition at $0$, instead of the
familiar (uniform) Lipschitz condition in an entire
neighbourhood. Formally, in analogy to (\ref{eq2_0}) let
$$
\cH_{{\sf pw}1} = \left\{ h \in \cH : \exists r \in \R^+ \: \mbox{\rm
    s.t.} \: \sup\nolimits_{x\in B_r(0)\setminus \{0\}}  \frac{ |h(x)|
    +  |h^{-1}(x) | }{|x| } < \infty \right\}  \, ,
$$
and notice the strict inclusion $(\cH_1 \cup \cH_{\sf diff} ) \subset
\cH_{{\sf pw}1}$. Say that two flows $\varphi$, $\psi$ on $X$ are {\bf
  pointwise Lipschitz equivalent} (or {\bf conjugate}) at $0$, in symbols $\varphi
\stackrel{{\sf pw}1}{\thicksim} \psi$ (or $\varphi
\stackrel{{\sf pw}1}{\cong} \psi$), if $\varphi
\stackrel{h}{\thicksim}\psi$ (or $\varphi \stackrel{h}{\cong}\psi$)
for some $h\in \cH_{{\sf pw}1}$. As a refinement of (\ref{eq13}),
\begin{equation}\label{eq6_1}
\begin{tikzcd}
\varphi \stackrel{{\sf lin}}{\thicksim} \psi \arrow[d, Rightarrow]
\arrow[r, Rightarrow] & \varphi \stackrel{1}{\thicksim} \psi \arrow[d,
Rightarrow]
\arrow[r, Rightarrow] & \varphi \stackrel{1^- }{\thicksim} \psi
\arrow[d, Rightarrow] \\
\varphi \stackrel{{\sf diff}}{\thicksim} \psi \arrow[r, Rightarrow] &
\varphi \stackrel{{\sf pw}1}{\thicksim} \psi \arrow[r, Rightarrow] &
\varphi \stackrel{0}{\thicksim} \psi 
\end{tikzcd}
\end{equation}
and similarly with $\thicksim$ replaced by $\cong$. No implication in
(\ref{eq6_1}) can be reversed in general. Moreover, with $\varphi
\stackrel{{\sf pw}1}{\thickapprox} \psi$ defined accordingly, Proposition \ref{lemH1} holds
for $\bigstar = {\sf pw}1$ as well.

\begin{rem}\label{rem6_2}
Though inspired by similar terminology in the literature \cite{DJ,
  gut, Hei0, mess},
the definition of {\em pointwise Lipschitz\/} in this article is motivated
solely by its subsequent use in the classification of (linear) flows. Letting
$$
{\rm Lip}_x h = \limsup\nolimits_{y\to x, y\ne x} \frac{|h(x) - h(y)|
  + |h^{-1}(x) - h^{-1}(y)|}{|x-y|} \in [0,\infty] \qquad \forall x
\in X  , h\in \cH_0 \, , 
$$
note that $h\in \cH_{{\sf pw}1}$ simply means that ${\rm Lip}_0
h<\infty$. Many other definitions in a similar vein are conceivable. For
instance, one might instead require that ${\rm
  Lip}_x h<\infty$ for some $r\in \R^+$ and all $x\in B_r(0)$. This
would result in a slightly more restrictive definition but, as will
become clear shortly, would not affect at all any of the 
conclusions regarding the pointwise Lipschitz equivalence of linear
flows. Alternatively, one might require that
$\sup_{x\in B_r(0)}{\rm Lip}_x h<\infty$. By \cite[Cor.\ 2.4]{DJ} this
even more restrictive definition would simply lead back to the familiar
(uniform) Lipschitz equivalence. 
\end{rem}

Informally put, the first main result of this section says that pointwise Lip\-schitz {\em equivalence\/}
is rather uninteresting for most linear flows, as it simply coincides
with topological equivalence. This means, however, that
$\stackrel{{\sf pw}1}{\thicksim}$ differs {\em drastically\/} from
$\stackrel{1}{\thicksim}$, even though the decrease in
regularity from $h\in \cH_{1}$ to $h\in \cH_{{\sf pw}1}$ may appear to
be minuscule.

\begin{theorem}\label{thm6_4}
Let $\Phi$, $\Psi$ be linear flows on $X$. If $\Phi$, $\Psi$ are
hyperbolic then the following statements are equivalent:
\begin{enumerate}
\item $\Phi \stackrel{{\sf pw}1}{\thicksim} \Psi$;
\item $\Phi \stackrel{0}{\thicksim} \Psi$;
\item $\{d_{\sf S}^{\Phi}, d_{\sf U}^{\Phi}\} = \{d_{\sf S}^{\Psi}, d_{\sf U}^{\Psi}\}$.
\end{enumerate}
\end{theorem}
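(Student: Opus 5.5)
The implications (i)$\Rightarrow$(ii) and (ii)$\Rightarrow$(iii) are essentially free. Indeed $\cH_{{\sf pw}1}\subset \cH_0$, so (i)$\Rightarrow$(ii) is immediate. For (ii)$\Rightarrow$(iii) I would invoke the classical topological classification of hyperbolic linear flows, as recorded in \cite{BW, BW2}. One notes that $h$ with $h(0)=0$ maps the set of points attracted to $0$ in forward (resp.\ backward) time onto the corresponding set for $\Psi$, possibly after a time reversal. Consequently $h(X^{\Phi}_{\sf S})$ equals $X^{\Psi}_{\sf S}$ or $X^{\Psi}_{\sf U}$, and invariance of domain gives the dimension statement. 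So the real content is (iii)$\Rightarrow$(i).

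For (iii)$\Rightarrow$(i), I would first reduce, replacing $\Psi$ by $\Psi^*$ if necessary (recall $\Psi^*\stackrel{\sf lin}{\thicksim}\Psi$), to the case $d^{\Phi}_{\sf S}=d^{\Psi}_{\sf S}$, $d^{\Phi}_{\sf U}=d^{\Psi}_{\sf U}$. Since $\stackrel{{\sf pw}1}{\thicksim}$ is an equivalence relation, it then suffices to show that every hyperbolic $\Phi$ is pointwise Lipschitz \emph{conjugate} at $0$ to the flow generated by $\mbox{\rm diag}\,[-I_{d_{\sf S}}, I_{d_{\sf U}}]$. Working with the product structure $\Phi\stackrel{\sf lin}{\cong}\Phi_{\sf S}\times\Phi_{\sf U}$, I would build $h=h_{\sf S}\times h_{\sf U}$. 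Here $h_{\sf S}$ conjugates $\Phi_{\sf S}$ to $e^{-t}$ on $X_{\sf S}$, and $h_{\sf U}$ is obtained from the stable construction applied to $\Phi_{\sf U}^*$. A product of maps each satisfying $|h_\bullet(x)|\asymp|x|$ near $0$ again satisfies this, up to equivalence of norms, so the pointwise condition is preserved.

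The key step, and the main obstacle, is the stable construction. Take an adapted (Lyapunov) norm $\|\cdot\|$ for which $t\mapsto\|\Phi_t x\|$ is strictly decreasing, so that the unit sphere $S$ is a global cross-section. Each $x\ne0$ is then uniquely $\Phi_{t}u$ with $u\in S$, $t=t(x)$, and the standard topological conjugacy is $h(\Phi_t u)=e^{-t}u$. This $h$ is \emph{not} pointwise Lipschitz in general, since $\|\Phi_t u\|$ decays like $e^{\lambda t}t^{m}$ with exponent depending on $u$, not like $e^{-t}$. The remedy is to abandon conjugacy to $e^{-t}$ in favour of a radial conjugacy. Set $h(x)=\|x\|\,u(x)$, where $u(x)\in S$ is the cross-section point of the orbit of $x$; equivalently $h(\Phi_tu)=\|\Phi_tu\|\,u$. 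Then $\|h(x)\|=\|x\|$ exactly, so $h$ and $h^{-1}$ are trivially pointwise Lipschitz at $0$. Moreover $h$ is a homeomorphism, since $x\mapsto(\|x\|,u(x))$ is continuous with continuous inverse by the cross-section property and monotonicity of $t\mapsto\|\Phi_tu\|$. Finally $h$ maps $\Phi$-orbits onto radial rays, which are orbits of the flow $e^{-t}$. This gives equivalence (not conjugacy), which suffices for (i).

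The step needing care is verifying that the adapted norm makes $t\mapsto\|\Phi_tu\|$ strictly decreasing with limits $0$ and $\infty$. Here I would use the standard construction $\|x\|=\int_0^\infty|\Phi_s x|\,{\rm d}s$, whose derivative along orbits is $-|x|<0$. Continuity of $u(\cdot)$ then follows from the implicit function theorem or a direct compactness argument.
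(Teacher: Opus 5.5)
Your treatment of (i)$\Rightarrow$(ii)$\Leftrightarrow$(iii), the reduction via $\Psi^*$, and the (un)stable case is sound. In fact your radial map $h(x)=\|x\|\,u(x)$ is the paper's Step I map $h(x)=\|x\|\Phi_{T(x)}x$, up to the choice of adapted norm.

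The gap is the passage to a general hyperbolic $\Phi$ with $d_{\sf S}^{\Phi},d_{\sf U}^{\Phi}\ge 1$. You first say the factor maps should be \emph{conjugacies}, but the maps you build are only \emph{equivalences}. A product of equivalences is not an equivalence:
\begin{itemize}
\item $h_{\sf S}$ reparametrizes the orbit of $y\in X_{\sf S}^{\Phi}$ by $\tau^{\sf S}_y(t)=\log(\|y\|/\|\Phi_t y\|)$;
\item $h_{\sf U}$ reparametrizes the orbit of $z\in X_{\sf U}^{\Phi}$ by $\tau^{\sf U}_z(t)=\log(\|\Phi_t z\|/\|z\|)$;
\item so $h_{\sf S}\times h_{\sf U}$ maps the orbit of $(y,z)$ onto $\{(e^{-\tau^{\sf S}_y(t)}h_{\sf S}(y),e^{\tau^{\sf U}_z(t)}h_{\sf U}(z)):t\in\R\}$, which is a $\widetilde{\Phi}$-orbit only if $\tau^{\sf S}_y\equiv\tau^{\sf U}_z$.
\end{itemize}
For $A^{\Phi}=\mbox{\rm diag}\,[-2,1]$ your construction gives $h_{\sf S}=h_{\sf U}=I_{\R}$. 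The identity does not map the $\Phi$-orbit $\{yz^2=1,\,z>0\}$ onto any orbit $\{yz=c\}$ of the model flow.

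No product-form map can be repaired either. If $h_{\sf S}\times h_{\sf U}$ is an equivalence, comparing components shows that $\tau_{(y,z)}$ depends on neither $y$ nor $z$. The cocycle identity then forces $\tau(t)=\alpha t$, so both factors are conjugacies up to a common rescaling. For $\mbox{\rm diag}\,[-2,1]$ the pointwise Lipschitz condition at $0$ then demands $\alpha=2$ on the stable factor and $\alpha=1$ on the unstable one, a contradiction.

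What is missing is a genuinely non-product ``joining'' of the two marginals $h_{\sf S}$, $h_{\sf U}$; this is the real content of Step II in the paper. The construction runs as follows.
\begin{itemize}
\item The norm $\|x\|^2=|G_{\sf S}^{1/2}y|^2+|G_{\sf U}^{1/2}z|^2$ is chosen via Proposition \ref{lem6_3} so that $t\mapsto\|\Phi_t x\|^2$ is strictly convex. Hence every orbit off $E_{\sf S}\cup E_{\sf U}$ has a unique point $\Phi_{T(x)}x$ of minimal norm, lying on the cone $\cC^{\Phi}$.
\item This point is sent to the point $\widetilde{x}$ of the model orbit with component directions $\Phi_{T_{\sf S}(y)}y$, $\Phi_{T_{\sf U}(z)}z$ that lies on the model's minimal-norm cone $\{\|y\|=\|z\|\}$ and has the same norm.
\item The remaining points of the two orbits are matched by norm, separately before and after the minimum, giving (\ref{eq6_p11}).
\end{itemize}
This keeps $\|h(x)\|=\|x\|$, so $h\in\cH_{{\sf pw}1}$, and it maps orbits onto orbits. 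What then remains is to check bijectivity and continuity at $E_{\sf S}\cup E_{\sf U}$, where $h$ reduces to $h_{\sf S}$, $h_{\sf U}$.
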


Denoting by $\langle
\cdot , \cdot \rangle$ the standard inner product on $X=\R^d$, recall
that a matrix $G\in \R^{d\times d}$ is {\em positive definite\/} if
$G=G^{\top}$ and $\langle Gx,x\rangle >0$ for all $x\in
X\setminus \{0\}$. As presented below, the proof of Theorem \ref{thm6_4} makes use of
the following elementary linear algebra fact; see, e.g., \cite[Sec.\ 7.2]{HJ}. 

\begin{prop}\label{lem6_3}
Given $A\in \R^{d\times d}$ with $\sigma (A)\subset
\C^+$, there exists a positive definite $G\in \R^{d\times d}$ so that 
$G A + A^{\top }G$ and $GA^2 + 2 A^{\top} G A + (A^{\top})^2 G$ both are
positive definite. 
\end{prop}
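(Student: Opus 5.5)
The plan is to reduce both conditions to a single normal form by a linear change of coordinates. For invertible $P\in\R^{d\times d}$, set $B=PAP^{-1}$ and $G=P^{\top}P$, which is positive definite. Since $A=P^{-1}BP$ and $A^{\top}=P^{\top}B^{\top}P^{-\top}$, a direct substitution gives
$$
GA+A^{\top}G = P^{\top}\bigl(B+B^{\top}\bigr)P \, , \qquad
GA^2+2A^{\top}GA+(A^{\top})^2G = P^{\top}\bigl(B^2+2B^{\top}B+(B^{\top})^2\bigr)P \, .
$$
Congruence by an invertible $P$ preserves positive definiteness. So it suffices to find some $B$ similar to $A$ for which $B+B^{\top}$ and $M(B):=B^2+2B^{\top}B+(B^{\top})^2$ are both positive definite.

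The key identity is
$$
M(B)=(B+B^{\top})^2+B^{\top}B-BB^{\top},
$$
which follows by expanding $(B+B^{\top})^2=B^2+BB^{\top}+B^{\top}B+(B^{\top})^2$. If $B$ is normal, the commutator term vanishes. If moreover $B+B^{\top}$ is positive definite, then $M(B)=(B+B^{\top})^2$ is positive definite as well, being the square of a symmetric invertible matrix.

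For the choice of $B$, I use the real Jordan form from (\ref{eq3_1b}), rescaled by a small parameter. For $\varepsilon>0$, conjugating each block $J_m(a+ib)$ by $\mbox{\rm diag}$ of powers of $\varepsilon$ (acting blockwise when $b\neq 0$) gives
$$
J_m(a+ib)\sim aI_{m\,{\sf d}(b)}+\bigl(J_m(ib)-K_m(b)\bigr)+\varepsilon K_m(b) \, .
$$
This yields $B_\varepsilon=\Lambda+S+\varepsilon N$. Here $\Lambda$ is diagonal with entries ${\sf Re}\,z>0$, because $\sigma(A)\subset\C^+$. The matrix $S$ is skew-symmetric, built from the $\pm b$ rotation parts, and commutes with $\Lambda$ since it acts within blocks carrying a common real part. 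The matrix $N=\mbox{\rm diag}[K_{m_j}(b_j)]$ is fixed.

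Then $B_0=\Lambda+S$ is normal, and $B_0+B_0^{\top}=2\Lambda$ is positive definite. Hence $B_0+B_0^{\top}$ and $M(B_0)=4\Lambda^2$ are both positive definite. Both $B\mapsto B+B^{\top}$ and $B\mapsto M(B)$ are continuous, and the set of positive definite matrices is open among symmetric matrices. So for sufficiently small $\varepsilon>0$, both $B_\varepsilon+B_\varepsilon^{\top}$ and $M(B_\varepsilon)$ remain positive definite. Taking $P$ with $PAP^{-1}=B_\varepsilon$ and $G=P^{\top}P$ completes the argument.

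The only real bookkeeping is checking that the scaled real Jordan block has exactly the form $\Lambda+S+\varepsilon N$ with $S$ skew and commuting with $\Lambda$. This follows from the block structure of $J_m(ib)$ and the commutation $K_m(b)R_m(c)=R_m(c)K_m(b)$ noted in Section \ref{sec2a}. I expect no serious obstacle beyond this.
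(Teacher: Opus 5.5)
Your proof is correct. The congruence identities, the identity $M(B)=(B+B^{\top})^2+B^{\top}B-BB^{\top}$, the normality of $B_0=\Lambda+S$, and the openness argument for small $\varepsilon$ all check out. Normality holds because $\Lambda$ is scalar on each real Jordan block, so $\Lambda S=S\Lambda$. In the bookkeeping, the fact you actually use is that the blockwise scaling matrix commutes with the rotation generator $J_m(ib)-K_m(b)$. This is immediate, and is not quite the $K_m(b)R_m(c)$ commutation you quote.

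The paper gives no proof of this proposition; it only quotes it as an elementary fact with a textbook reference. The natural comparison is therefore with the standard argument, which differs from yours. Write $\mathcal{T}_A(G)=GA+A^{\top}G$. Then $\mathcal{T}_A\bigl(\mathcal{T}_A(G)\bigr)=GA^2+2A^{\top}GA+(A^{\top})^2G$. Since $\sigma(A)\subset\C^+$, Lyapunov's theorem makes $\mathcal{T}_A$ invertible, with $\mathcal{T}_A^{-1}(Q)=\int_0^{\infty}e^{-tA^{\top}}Qe^{-tA}\,{\rm d}t$ positive definite for every positive definite $Q$. Hence $G=\mathcal{T}_A^{-1}\bigl(\mathcal{T}_A^{-1}(I_d)\bigr)$ works, with $\mathcal{T}_A(G)=\mathcal{T}_A^{-1}(I_d)$ and $\mathcal{T}_A^2(G)=I_d$.

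That route is coordinate-free, needs no perturbation, and even lets you prescribe the second matrix exactly. Yours is more hands-on and has a useful by-product. For a single block $J_m(a+ib)$, your $P$ gives $G=P^{\top}P=\mbox{\rm diag}\,[1,g,\ldots,g^{m-1},1,g,\ldots,g^{m-1}]$ with $g=\varepsilon^{-2}$. This is exactly the choice the paper asserts, without argument, to work for large $g$ in the proof of Lemma~\ref{lem6_6}. Your argument explains why: in the norm $|G^{1/2}\cdot|$, the generator becomes a small perturbation of a normal matrix whose symmetric part is positive definite.
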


\begin{proof}[Proof of Theorem \ref{thm6_4}]
Plainly (i)$\Rightarrow$(ii), and it is well known that
(ii)$\Leftrightarrow$(iii); see, e.g., \cite[Thm.\ 1.1]{BW2}. Thus, it
only has to be shown that (ii)$\Rightarrow$(i). To this end, assume
that $\Phi \stackrel{0}{\thicksim} \Psi$. By Proposition \ref{lemH1},
no generality is lost by assuming that $(d_{\sf S}^{\Phi}, d_{\sf
  U}^{\Phi})= ( d_{\sf S}^{\Psi}, d_{\sf U}^{\Psi})$. With this, all
that needs to be shown is that
\begin{equation}\label{eq6_p1}
\Phi \stackrel{{\sf pw}1}{\thicksim} \widetilde{\Phi} \quad \mbox{\rm
  where} \quad A^{\widetilde{\Phi}} = \mbox{\rm diag}\, [-I_{d_{\sf
    S}^{\Phi}}, I_{d_{\sf U}^{\Phi}}] \, ,
\end{equation}
and this will now be done in two separate steps for the reader's
convenience.

\medskip

\noindent
\underline{Step I:} Proof of (\ref{eq6_p1}) for (un)stable $\Phi$.

\smallskip

\noindent
Assume that $\Phi$ is stable. As $\sigma (-A^{\Phi})\subset \C^+$, use
Proposition \ref{lem6_3} to pick a positive definite $G$ so that
$$
B^{\Phi} := - (G A^{\Phi} + (A^{\Phi})^{\top} G) \, , \quad
C^{\Phi} := G (A^{\Phi})^2 + 2  (A^{\Phi})^{\top} G A^{\Phi} +
\bigl( (A^{\Phi})^{\top} \bigr)^2 G
$$
both are positive definite. Letting $\|\cdot\|= |G^{1/2} \cdot|$ for
convenience, note that $t\mapsto \|\Phi_t x\|^2$ is strictly convex
for every $x\in X \setminus \{0\}$ because
$$
\frac{{\rm d}^2}{{\rm d}t^2} \|\Phi_t x\|^2 = \langle C^{\Phi} \Phi_t
x , \Phi_t x \rangle >0 \qquad \forall t \in \R \, ,
$$
and hence $t\mapsto \|\Phi_t x\|$ is strictly decreasing. For every
$x\in X\setminus \{0\}$ denote by $T(x)$ the unique $t\in \R$ with
$\|\Phi_t x\|=1$; clearly, $x\mapsto T(x)$ is smooth on $X\setminus
\{0\}$. With this, let $h(0)=0$ and
$$
h(x) = \|x\| \Phi_{T(x)} x \qquad \forall x \in X \setminus \{0\} \, .
$$
Note that the map $h$ is continuous on $X$, smooth on $X\setminus \{0\}$, and
$\|h(x)\|=\|x\|$ for all $x\in X$. Moreover, it is readily seen that $h$ is
one-to-one and onto, with
$$
h^{-1} (x) = \frac1{\|x\|} \Phi_{T(x/\|x\|^2)} x \qquad \forall x \in
X \setminus \{0\} \, ,
$$
and hence $h\in \cH_{{\sf pw}1}(X)$. Furthermore, $T(\Phi_t x) = T(x)
- t$ for all $t\in \R$, $x\in X\setminus \{0\}$, and so
$$
h(\Phi_t x) = \|\Phi_t x\| \Phi_{T(\Phi_t x)} \Phi_t x =
\frac{\|\Phi_t x\|}{\|x\|} h(x) = \widetilde{\Phi}_{\tau_x(t)} h(x)
\qquad \forall t \in \R , x\in X\setminus \{0\} \, ,
$$
where, for every $x\in X\setminus \{0\}$,
$$
\tau_x(t) = \log \frac{\|x\|}{\|\Phi_t x\|} \qquad \forall t \in \R \, .
$$
Thus $\Phi \stackrel{{\sf pw}1}{\thicksim} \widetilde{\Phi}$.
The argument for unstable $\Phi$ is identical, except that $B^{\Phi}$
and $\tau_x$ both have to be multiplied by $-1$, as $t\mapsto \|\Phi_t
x\|$ now is strictly {\em increasing}. In summary,
(\ref{eq6_p1}) holds for every (un)stable flow $\Phi$.

\medskip

\noindent
\underline{Step II:} Proof of (\ref{eq6_p1}) for arbitrary
(hyperbolic) $\Phi$.

\smallskip

\noindent
Assume that $d_{\sf S}^{\Phi}, d_{\sf U}^{\Phi}\ge 1$; no generality
is lost by assuming that $X_{\sf S}^{\Phi} = \mbox{\rm span}\, \{ e_1,
\ldots , e_{d_{\sf S}^{\Phi}}\}=: E_{\sf S}$ and $X_{\sf U}^{\Phi} =
\mbox{\rm span}\, \{ e_{d_{\sf S}^{\Phi}+ 1}, \ldots e_d \}=: E_{\sf
  U}$. Note that $X_{\sf S}^{\widetilde{\Phi}} = E_{\sf S}$, $X_{\sf
  U}^{\widetilde{\Phi}} = E_{\sf U}$ as well. For ease of notation,
let $y= P_{\sf S} x \in E_{\sf S}$, $z= P_{\sf U} x \in E_{\sf U}$,
and write $x$ symbolically as
$\left[\! \begin{array}{c} y\\     z \end{array}\! \right]$. As in
Step I, for $\bullet \in \{{\sf S}, {\sf U}\}$ pick a positive
definite $G_{\bullet} \in \R^{d_{\bullet}^{\Phi}\times
  d_{\bullet}^{\Phi}}$ so that $B_{\bullet}:= B^{\Phi_{\bullet}}$,
$C_{\bullet}:= C^{\Phi_{\bullet}}$ both are positive definite, and let
$$
\|x\| = \sqrt{|G_{\sf S}^{1/2} y|^2 + |G_{\sf U}^{1/2}z|^2} \qquad \forall x \in X
\, .
$$
For every $x\in E_{\bullet}\setminus \{0\}$ there exists a unique
$T_{\bullet}(x)\in \R$ with $\|\Phi_{T_{\bullet}(x)}x\|=1$. As seen in
Step I, defining $h_{\bullet}(x)= \|x\|\Phi_{T_{\bullet}(x)}x$ for all $x\in
E_{\bullet}\setminus \{0\}$ yields a homeomorphism $h_{\bullet}\in
\cH_{{\sf pw}1}(E_{\bullet})$.

Now, fix any $x\in X \setminus (E_{\sf S} \cup E_{\sf U})$ and deduce
from
$$
\frac{{\rm d}^2}{{\rm d}t^2} \|\Phi_t x\|^2 = \langle C_{\sf S }\Phi_t
y , \Phi_t y \rangle +  \langle C_{\sf U }\Phi_t
z , \Phi_t z \rangle >0 \qquad \forall t \in \R \, ,
$$
that $t\mapsto \|\Phi_t x\|^2$ is strictly convex. Since $\|\Phi_t
x\|\to \infty$ as $|t|\to \infty$, there exists a unique $T(x)\in \R$
for which $\|\Phi_t x\|$ is minimal. Note that $x\mapsto T(x)$ is
smooth on $X\setminus (E_{\sf S} \cup E_{\sf U})$. Also, 
$0<\|\Phi_{T(x)}x\|\le \|x\|$, and from
$$
\frac{{\rm d}}{{\rm d}t} \|\Phi_t x\|^2 = -\langle B_{\sf S }\Phi_t
y , \Phi_t y \rangle +  \langle B_{\sf U }\Phi_t
z , \Phi_t z \rangle  \qquad \forall t \in \R \, ,
$$
it is clear that $t$ equals $T(x)$ if and only if $\Phi_t x \in
\cC^{\Phi}$, with the cone $\cC^{\Phi}\subset X$ given by
$$
\cC^{\Phi} = \left\{
\left[\! \begin{array}{c} y\\     z \end{array}\! \right] \in X : \langle
B_{\sf S} y,y\rangle =  \langle
B_{\sf U} z,z\rangle 
  \right\} \, .
$$
Similarly, $t\mapsto \big\|\widetilde{\Phi}_t x\big\|^2 = e^{-2t}
\|y\|^2 + e^{2t} \|z\|^2 $ is strictly convex, and so $\big\|\widetilde{\Phi}_t x\big\|$ is minimal precisely
if $t = \log \sqrt{\|y\| / \|z\| }$ or equivalently
$$
\widetilde{\Phi}_t x \in \cC^{\widetilde{\Phi}} := \left\{
\left[\! \begin{array}{c} y\\     z \end{array}\! \right] \in X : \|y\| =\|z\|
  \right\} \, .
 $$
 Now, observe that there exists a unique $\widetilde{x}\in \cC^{\widetilde{\Phi}}$ with
 $\|\widetilde{x}\| = \|\Phi_{T(x)}x\|$ so that
 $ \widetilde{x} = 
\left[\! \begin{array}{c} \Phi_r y\\    \Phi_s z \end{array}\! \right]
$ for some $r,s\in \R$; 
in fact, $\widetilde{x} = \displaystyle \frac{\|\Phi_{T(x)}x\|}{\sqrt{2}}
\left[\! \begin{array}{c} \Phi_{T_{\sf S}(y)} y\\    \Phi_{T_{\sf U}(z)}
        z \end{array}\! \right] $. Furthermore,
    $\big\|\widetilde{\Phi}_t \widetilde{x}\big\| =\|x\|$ if and only
if 
\begin{equation}\label{eq6_p10}
e^{2t} = \frac{\|x\|^2 \pm \sqrt{\|x\|^4 -
    \|\Phi_{T(x)}x\|^4}}{\|\Phi_{T(x)}x\|^2} \, .
\end{equation}
Note that (\ref{eq6_p10}) yields precisely two values $t = \pm
\widetilde{T}(x)$ with $\widetilde{T}(x)>0$ whenever $T(x)\ne
0$. Defining $h(x)$ as
$\widetilde{\Phi}_{-\widetilde{T}(x)}\widetilde{x}$, $\widetilde{x}$, or
$\widetilde{\Phi}_{\widetilde{T}(x)}\widetilde{x}$ depending on
whether $T(x)> 0$, $T(x)=0$, or
$T(x)<0$ (that is, $\mbox{\rm sign} \, T(x)$ equals $1$, $0$, or $-1$ respectively) yields
\begin{equation}\label{eq6_p11}
  h(x) = \frac1{\sqrt{2}}\left[\begin{array}{c}
\sqrt{\|x\|^2 + \mbox{\rm sign} \, T(x) \sqrt{\|x\|^4 -
                                 \|\Phi_{T(x)}x\|^4}} \, \Phi_{T_{\sf
                                 S}(y)}y \\[3mm]
\sqrt{\|x\|^2 - \mbox{\rm sign} \, T(x) \sqrt{\|x\|^4 -
                                 \|\Phi_{T(x)}x\|^4}} \, \Phi_{T_{\sf
                                 U}(z)}z                                  
                               \end{array}\right] \qquad \forall x \in
                             X \setminus (E_{\sf S} \cup E_{\sf U}) \, .
\end{equation}
Plainly $h$ is continuous on $X \setminus (E_{\sf S} \cup E_{\sf U})
$. Moreover, given $y\in E_{\sf S}\setminus \{0\}$, observe that
letting $z\to 0$ in $x=
\left[\! \begin{array}{c} y\\     z \end{array}\! \right]$ yields
$T(x)\to \infty$, $\|\Phi_{T(x)}x\| \to 0$, and $\|x\|\to \|y\|$.
This shows that
$$
h(x) \to \left[\! \begin{array}{c} \|y\| \Phi_{T_{\sf S}(y)}y\\
                 0 \end{array}\! \right] = \left[\! \begin{array}{c} h_{{\sf
                                                S}}(y)\\
                                                      0 \end{array}\! \right] \quad \mbox{\rm as} \quad z
\to 0 \, .
$$
Similarly, given $z\in E_{\sf U}\setminus \{0\}$,
$$
h(x) \to \left[\! \begin{array}{c} 0 \\ h_{{\sf
   U}}(z) \end{array}\! \right] \quad \mbox{\rm as} \quad y
\to 0 \, .
$$
Thus, with $h(x) := \left[\! \begin{array}{c} h_{\sf S}(y)
                           \\
                           h_{\sf U} (z) \end{array}\! \right]$
 for every $x\in E_{\sf S} \cup E_{\sf U}$, the map $h$ is
 continuous. Moreover, it is readily verified that $h$ is one-to-one
 and onto, as strongly suggested by its construction; see also Figure
 \ref{fig6}. Since $\|h(x)\|=\|x\|$ for all $x\in X$, clearly $h\in
 \cH_{{\sf pw}1}(X)$. Informally put, $h$ is a ``joining'' of its two
 ``marginals'' $h_{\bullet}\in \cH_{{\sf
     pw}1}(E_{\bullet})$. Crucially,
 $T(\Phi_tx)= T(x) -t$ for all $ t \in \R , x \in X
 \setminus (E_{\sf S} \cup E_{\sf U})$, and hence 
\begin{equation}\label{eq6_p12}
h(\Phi_t x) = \frac1{\sqrt{2}}\left[\begin{array}{c}
\sqrt{\|\Phi_t x\|^2 + \mbox{\rm sign} \, ( T(x)-t)  \sqrt{\|\Phi_t x\|^4 -
                                 \|\Phi_{T(x)}x\|^4}} \, \Phi_{T_{\sf
                                 S}(y)}y \\[3mm]
\sqrt{\|\Phi_t x\|^2 - \mbox{\rm sign} \, ( T(x) -t)  \sqrt{\|\Phi_t x\|^4 -
                                 \|\Phi_{T(x)}x\|^4}} \, \Phi_{T_{\sf
                                 U}(z)}z                                  
                                    \end{array}\right] =
\widetilde{\Phi}_{\tau_x(t)} h(x) \qquad \forall t \in \R   \, ,                               
\end{equation}
where $\tau_x$ is given by
$$
\tau_x(t) = \frac12 \log \frac{ \|x\|^2 + \mbox{\rm sign} \, T(x) \sqrt{\|x\|^4 -
                                 \|\Phi_{T(x)}x\|^4}}{ \|\Phi_t
                               x\|^2 + \mbox{\rm sign} \, (T(x)-t)
                               \sqrt{\|\Phi_t x\|^4 -
                                 \|\Phi_{T(x)}x\|^4}}\qquad \forall t
                           \in \R \, .
$$
With the appropriate $\tau_x$ from Step I, the two outer-most
expressions in (\ref{eq6_p12}) agree also for every $x\in ( E_{{\sf S}} \cup
E_{\sf U})\setminus \{0\}$. In summary, $\Phi\stackrel{h}{\thicksim}\widetilde{\Phi}$
with $h\in \cH_{{\sf pw}1}(X)$. This establishes (\ref{eq6_p1}) for
every hyperbolic flow $\Phi$, and hence completes the proof as previously
discussed.
\end{proof}

\begin{figure}[ht] 
  \psfrag{teu}[]{$E_{\sf U}$}
  \psfrag{tes}[]{$E_{\sf S}$}
  \psfrag{tcphi}[]{$\cC^{\Phi}$}
  \psfrag{tphitx}[r]{$\Phi_{T(x)}x$}
\psfrag{tphirx}[]{$\Phi_{\R}x$}
  \psfrag{tphitilxtil}[l]{$\widetilde{\Phi}_{\R}h(x)$}
  \psfrag{ttilx}[]{$\widetilde{x}$}
  \psfrag{tx}[]{$x$}
  \psfrag{tit}[r]{$\Phi \stackrel{h}{\thicksim} \widetilde{\Phi}$}
  \psfrag{thx}[]{$h(x)$}
  \psfrag{tn1}[]{$\|\cdot \| \! = \! \|\Phi_{T(x)}x\|\:$}
   \psfrag{tn2}[]{$\|h(x)\| \! = \! \| x\|$}
   \psfrag{tcphitil}[]{$\cC^{\widetilde{\Phi}}$}
   \psfrag{tconst}[]{$\|\cdot\|= \|x\|$}
%
%
%
\vspace*{2mm}
\begin{center}
\includegraphics{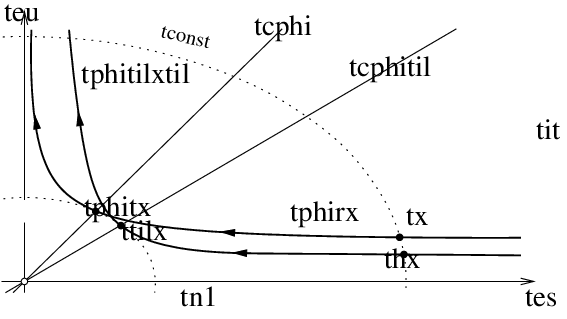}
\end{center}
\vspace*{-4mm}
\caption{Constructing $h\in \cH_{{\sf pw}1}(X)$ to prove
  (\ref{eq6_p1}) for hyperbolic $\Phi$.}\label{fig6}
\end{figure}

\begin{rem}\label{rem6_5}
The non-trivial and perhaps somewhat surprising implication
(ii)$\Rightarrow$(i) in Theorem \ref{thm6_4} may fail if $\Phi$,
$\Psi$ are not hyperbolic. To illustrate this by means of a simple
example, take $d=3$, and for every $a\in \R^+$ let $\Phi$, $\Psi$ be
generated by
$\mbox{\rm diag} \, [-a, J_1(2\pi i)]$, $\mbox{\rm diag} \, [-1,
J_1(2\pi i)]$ respectively. Assume that $\Phi \stackrel{h}{\thicksim} \Psi$ for some
$h\in \cH_{{\sf pw}1}(\R^3)$. Letting $x_n = e^{-n} e_1 +
e^{-(1+a)n}e_2$ for every $n\in \N$, observe that
$$
|x_n| = e^{-n} \sqrt{1 + e^{-2an}} \, , \quad |\Phi_n x_n| =
e^{-(1+a)n} \sqrt{2} \qquad \forall n\in \N \, .
$$
Clearly, $\lim_{j\to \infty} \Phi_j x_n = e^{-(1+a)n}e_2\in \mbox{\rm
  Per}_1 \Phi \setminus \{0\}$ for every $n\in \N$. The continuity of
$h$ implies that $\lim_{j\to \infty} \Psi_{\tau_{x_n}(j)} h(x_n) = h(e^{-(1+a)n}e_2)\in
\mbox{\rm Per}_1 \Psi \setminus \{0\}$. Since $h^{-1}$ is
continuous as well, it is readily seen that $\limsup_{t\to
  \infty}|\tau_{x_n}(t) - t|\le 2$, which in turn implies that $\sup_{t\ge
  0}|\tau_{x_n}(t) - t |\le 4$. Pick any $\kappa > \mbox{\rm Lip}_0h\ge 1$. With this, observe that for all sufficiently large
$n\in \N$,
$$
|h(\Phi_n x_n)| \le \kappa |\Phi_n x_n| = \kappa \sqrt{2} e^{-(1+a)n}
\, ,
$$
but also, since $t\mapsto |\Psi_t h(x_n)|$ is (strictly) decreasing,
$$
|\Psi_{\tau_{x_n}(n)}h(x_n)| \ge  |\Psi_{n+4} h(x_n)|
                  \ge e^{-(n+4)} |h(x_n)| \ge e^{-(n+4)}
                  \frac{|x_n|}{\kappa} = e^{-2n} \frac1{\kappa e^4}
                  \sqrt{1+e^{-2an}} \, .
$$
For all sufficiently large $n\in \N$, therefore,
$$
 e^{-2n} \frac1{\kappa e^4}
                  \sqrt{1+e^{-2an}} \le  |\Psi_{\tau_{x_n}(n)} h(x_n)| =
                  |h(\Phi_{n} x_n ) | \le \kappa \sqrt{2} e^{-(1+a)n} \, ,
$$
so necessarily $a\le 1$. Interchanging the roles of $\Phi$,
$\Psi$ yields $a=1$. In summary, $\Phi \stackrel{{\sf
    pw}1}{\thicksim}\Psi$ (if and) only if $a=1$ whereas clearly
$\Phi \stackrel{0}{\thicksim}\Psi$ for every $a\in \R^+$. 
\end{rem}

The second main result of this section, Theorem \ref{thm6_8} below,
provides a characterization of pointwise 
Lipschitz {\em conjugacy}. Though it arguably is less surprising than
Theorem \ref{thm6_4} and instead aligns more closely with its
uniform counterpart, the result is interesting nonetheless as it
relates pointwise Lipschitz conjugacy to a classical concept of stability theory. Specifically,
recall that two linear flows $\Phi$, $\Psi$ on $X$ are {\bf
  kinematically similar} if there exists an invertible linear operator
$Q$ so that
$$
\sup\nolimits_{t\in \R} (|\Phi_t Q^{-1} \Psi_{-t}| + |\Psi_t Q
\Phi_{-t}|) < \infty \, ;
$$
see, e.g., \cite[Sec.\ 5]{cop} or \cite[Sec.\ 4]{KS}. To express the kinematic similarity of linear flows in terms of their
generators, it is helpful to introduce one further, tailor-made notion of
similarity for matrices, reminiscent of Lipschitz similarity, as follows: In
analogy to (\ref{eq3_1a}), for every $m\in \N$, $a,b\in \R$ define
$$
\cK J_m(a+ib) = a I_{m\, {\sf d}(b)} + K_m(b) =  \left\{
  \begin{array}{ll}
    J_m(a) & \mbox{\rm if } b=0  \, ,\\
   \mbox{\rm diag}\, [J_m(a), J_m(a)] & \mbox{\rm if } b\ne 0  \, .
    \end{array}
\right.
$$
Given any $A\in \R^{d\times d}$, and with $P$, $J_{m_j}(z_j)$ as in
(\ref{eq3_1b}), let
$$
\cK A = P^{-1} \mbox{\rm diag}\, \bigl[
\cK J_{m_1} (z_1) , \ldots , \cK J_{m_k} (z_k)
\bigr]  P \, .
$$
Say that $A^{\Phi}$, $A^{\Psi}$ are {\bf kinematically similar} if
$\cK A^{\Phi}$, $\cK A^{\Psi}$ are similar. Informally put, two
matrices are kinematically similar precisely if their Jordan normal
forms coincide, except possibly for the {\em imaginary\/} parts of
their eigenvalues. Just like its Lipschitz
counterpart, the notion of kinematic similarity for matrices is
well-defined. It also is highly suggestive in that, as the
reader may know or suspect already, $\Phi$, $\Psi$ are kinematically
similar precisely if $A^{\Phi}$,
$A^{\Psi}$ are; see \cite[Prop.\ 4.2]{KS}. Moreover, it is readily seen
that, as a refinement of (\ref{eq2z2}),
$$
A^{\Phi}, A^{\Psi} \enspace \mbox{\rm Lipschitz similar} \quad
\Longrightarrow \quad
A^{\Phi}, A^{\Psi} \enspace \mbox{\rm kinematically similar} \quad
\Longrightarrow \quad
A^{\Phi}, A^{\Psi} \enspace \mbox{\rm Lyapunov similar}  \, ;
$$
here the left and right implication cannot be reversed in general
for $d\ge 4$ and $d\ge 2$ respectively, though both implications are reversible
whenever $\Phi$, $\Psi$ are diagonal.

As alluded to earlier, pointwise Lipschitz {\em conjugacy\/}
behaves quite differently from what one might expect based on Theorem
\ref{thm6_4}. The proof of Theorem \ref{thm6_8} builds on the
following special case.

\begin{lem}\label{lem6_6}
Given $m\in \N$ and $a,b\in \R\setminus \{0\}$, let $\Phi$, $\Psi$ be
the flows on $\R^{2m}$ generated by $J_m(a+ib)$, $\mbox{\rm diag}\,
[J_m(a), J_m(a)]$ respectively. Then $\Phi \stackrel{{\sf pw}1}{\cong}\Psi$.
\end{lem}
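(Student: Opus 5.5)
Since $R_m(bt)$ commutes with $K_m(b)$, by (\ref{eq2z1}) one has $\Phi_t = e^{at}R_m(bt)e^{tK_m(b)}$ and $\Psi_t = e^{at}e^{tK_m(b)}$ for all $t\in\R$, where $e^{tJ_m(a)}$ in each of the two diagonal blocks gives exactly $e^{at}e^{tK_m(b)}$ (the case $t=0$ being trivial). Hence $\Phi_t = R_m(bt)\Psi_t = \Psi_t R_m(bt)$. The natural idea, mimicking Lemma \ref{lem4_1}, is to undo the rotation by a time-like function of the point. Since $a\ne 0$, $\Psi$ is hyperbolic, and by Proposition \ref{lem6_3} (applied to $\pm A^{\Psi}$ according to the sign of $a$) there is a norm $\|\cdot\| = |G^{1/2}\cdot|$ for which $t\mapsto\|\Psi_t x\|$ is strictly monotone and tends to $0$ and $\infty$ at opposite ends, for every $x\ne 0$. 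Define $T(x)\in\R$ for $x\ne0$ as the unique time with $\|\Psi_{T(x)}x\|=1$. Then $T$ is continuous, indeed smooth, on $\R^{2m}\setminus\{0\}$ (implicit function theorem, using strict monotonicity), and $T(\Psi_t x) = T(x)-t$. Moreover $R_m(c)$ commutes with $\Psi_t$, but $T(R_m(c)x)$ need not equal $T(x)$ unless $G$ commutes with $R_m(c)$. To arrange this, average $G$ over the compact group $\{R_m(c):c\in\R\}$, i.e.\ replace $G$ by $\frac{1}{2\pi}\int_0^{2\pi}R_m(c)^{\top}GR_m(c)\,{\rm d}c$. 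Positive definiteness of $G$ and of the two quadratic expressions in Proposition \ref{lem6_3} is preserved, because $R_m(c)$ commutes with $A^{\Psi}$ and conjugation by it preserves positivity. Then $\|R_m(c)x\| = \|x\|$ and $T(R_m(c)x) = T(x)$.

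Now set $h(0)=0$ and $h(x) = R_m\bigl(bT(x)\bigr)x$ for $x\ne0$. This is a continuous bijection on $\R^{2m}\setminus\{0\}$ with inverse $x\mapsto R_m(-bT(x))x$, since $T(h(x))=T(x)$ by rotation invariance. As $\|h(x)\| = \|x\|$, we get $h(x)\to0$ as $x\to0$, so $h$ and $h^{-1}$ are continuous at $0$, and $\|h(x)\|/\|x\| = 1$ gives $h\in\cH_{{\sf pw}1}$ by norm equivalence. For the conjugacy, compute
\[
h(\Phi_t x) = R_m\bigl(bT(\Phi_t x)\bigr)R_m(bt)\Psi_t x .
\]
Since $\Phi_t x = R_m(bt)\Psi_t x$, rotation invariance gives $T(\Phi_t x) = T(\Psi_t x) = T(x)-t$. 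Hence $h(\Phi_t x) = R_m(bT(x)-bt+bt)\Psi_t x = \Psi_t R_m(bT(x))x = \Psi_t h(x)$, using $R_m(b\cdot)$ as a one-parameter group (${\sf d}$ is constant, $b\ne0$). Thus $\Phi\stackrel{h}{\cong}\Psi$.

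The main point requiring care is the construction of a rotation-invariant norm in which orbits of $\Psi$ cross the unit sphere exactly once, i.e.\ the averaging step. Without it, $T$ would fail to be invariant under $R_m$ and the conjugacy identity would break. A secondary check is that $T$ is genuinely continuous, which follows from strict monotonicity via the derivative $-\langle B\Psi_tx,\Psi_tx\rangle\ne0$. Note that $h$ is typically not Lipschitz near $0$, since it rotates by an angle of order $\log|x|$; this is consistent with only pointwise regularity being claimed.
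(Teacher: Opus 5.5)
Your proposal is correct and matches the paper's proof: the same time function $T$ (defined through a norm $|G^{1/2}\cdot|$ that is invariant under all $R_m(c)$) and the same conjugacy $h(x)=R_m\bigl(bT(x)\bigr)x=\Psi_{-T(x)}\Phi_{T(x)}x$. The only difference is how you get the rotation-invariant norm: the paper uses the explicit choice $G=\mbox{\rm diag}\,[1,g,\ldots,g^{m-1},1,g,\ldots,g^{m-1}]$ with $g$ large, which commutes with $R_m(c)$ automatically, while you average the $G$ from Proposition~\ref{lem6_3} over the rotation group; both work.
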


\begin{proof}
Mimicking Step I of the proof of Theorem
\ref{thm6_4}, choose $G$ so that $t\mapsto \|\Phi_t
x\|\equiv \|\Psi_t x\|$ is strictly monotone for every $x\in
\R^{2m}\setminus \{0\}$; for instance, one may choose $G = \mbox{\rm
  diag}\, [1,g, \ldots , g^{m-1}, 1,g , \ldots , g^{m-1}]$ with
$g\in \R^+$ large enough (depending on $m$ and $a$). Defining
$T = T(x)$ accordingly, let $h(0)=0$ and
$$
h(x) = \Psi_{-T(x)} \Phi_{T(x)} x \qquad \forall x \in \R^{2m}
\setminus \{0\} \, .
$$
Recall that by (\ref{eq2z1}),
$$
\Phi_t = e^{at} R_m(bt) e^{t K_m(b)} \, , \quad \Psi_t = e^{at}
e^{tK_m (b)} \qquad \forall t \in \R \setminus \{0\} \, ,
$$
and consequently, since $R_m(bt)$, $K_m(b)$ commute,
\begin{equation}\label{eq6_p20}
h(x) = \left\{
  \begin{array}{ll}
    x & \mbox{\rm if $x=0$ or $\|x\|=1$} \, , \\
    R_m\bigl( bT(x) \bigr) x & \mbox{\rm otherwise} \, .
    \end{array}
  \right.
\end{equation}
Note that $h$ is continuous on $\R^{2m}$, smooth on $\R^{2m}\setminus
\{0\}$, and $|h(x)|=|x|$ for all $x\in \R^{2m}$. Moreover, it is readily
seen that $h$ is one-to-one and onto, with
$$
h^{-1}(x) = R_m \bigl( - bT(x) \bigr)x \qquad \forall x\in
\R^{2m}\setminus \{0\} : \|x\|\ne 1 \, ,
$$
and hence $h\in \cH_{{\sf pw}1}(\R^{2m})$. Since $T(\Phi_tx) =
T(x) - t$ for all $t\in \R$, $x\in \R^{2m}\setminus \{0\}$,
$$
h(\Phi_t x) = \Psi_{t - T(x)} \Phi_{T(x) - t} \Phi_t x = \Psi_t h(x)
\qquad \forall t \in \R \, ,
$$
and the two outer-most expressions agree also if $x=0$. In summary,
$\Phi \stackrel{h}{\cong} \Psi$ with $h\in \cH_{{\sf pw}1}(\R^{2m})$.
\end{proof}

\begin{rem}\label{rem6_7}
In the proof of Lemma \ref{lem6_6}, notice that for $m=1$ one may choose
$G=I_2$. Then $T(x) = -
a^{-1}\log |x|$, and (\ref{eq6_p20}) yields {\em exactly\/} the homeomorphism $h_{-b/a} \in
\cH_1 (\R^2)$ encountered in (\ref{eq5_add20}). By contrast, $J_m(a+ib)$,
$\mbox{\rm diag}\, [J_m(a), J_m(a)]$ are not Lipschitz similar
whenever $m\ge 2$.
In this case, by Theorem \ref{thm1x} the conjugacy $h\in \cH_{{\sf
    pw}1}(\R^{2m})$ defined in (\ref{eq6_p20}), though smooth on $\R^{2m}\setminus \{0\}$,
cannot satisfy a (uniform) Lipschitz condition near $0$. To see this
directly also, choose a suitable diagonal $G$ and consider for instance
$$
y_n = ne^{-n} e_1 + a e^{-n} e_2\, , \quad z_n = n e^{-n} e_1 - a
e^{-n} e_2 \qquad \forall n \in \N \, .
$$
A short calculation yields
$$
\limsup\nolimits_{n\to \infty} \frac{|h(y_n) - h(z_n)|}{n|y_n - z_n|}
= \frac{1}{|a|} \, , 
$$
and hence indeed $h\not \in \cH_1(\R^{2m})$. 
\end{rem}

\begin{theorem}\label{thm6_8}
Let $\Phi$, $\Psi$ be linear flows on $X$. Then the following
statements are equivalent:
\begin{enumerate}
\item $\Phi \stackrel{{\sf pw}1}{\cong}\Psi$;
  \item $\Phi$, $\Psi$ are kinematically similar while $\Phi_{\sf C}
    \stackrel{{\sf lin}}{\cong} \Psi_{\sf C}$;
    \item $A^{\Phi}$, $A^{\Psi}$ are kinematically similar while
      $A^{\Phi_{\sf C}}, A^{\Psi_{\sf C}}$ are similar.
\end{enumerate}
\end{theorem}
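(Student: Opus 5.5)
The plan is to prove (ii)$\Leftrightarrow$(iii) by citing known results, (iii)$\Rightarrow$(i) by an explicit blockwise construction built on Lemma \ref{lem6_6}, and (i)$\Rightarrow$(iii) by showing that a pointwise Lipschitz conjugacy preserves the refined Lyapunov spaces of the hyperbolic parts.

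\emph{(ii)$\Leftrightarrow$(iii).} By \cite[Prop.\ 4.2]{KS}, $\Phi$, $\Psi$ are kinematically similar precisely if $A^{\Phi}$, $A^{\Psi}$ are. By the ``Moreover'' part of Proposition \ref{prop1zb}, $\Phi_{\sf C}\stackrel{{\sf lin}}{\cong}\Psi_{\sf C}$ precisely if $A^{\Phi_{\sf C}}$, $A^{\Psi_{\sf C}}$ are similar.

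\emph{(iii)$\Rightarrow$(i).} Kinematic similarity is compatible with the splitting into ${\sf S}$, ${\sf C}$, ${\sf U}$, since $\cK$ keeps real parts and block sizes. So it suffices to produce pointwise Lipschitz conjugacies on each of $X_{\sf S}$, $X_{\sf C}$, $X_{\sf U}$ and take their product.
\begin{enumerate}
\item On $X_{\sf C}$, use the linear conjugacy supplied by (iii).
\item On $X_{\sf H}$, put $A^{\Phi}$ in real Jordan form. Conjugate each block $J_m(a+ib)$ with $b\ne 0$ to $\mbox{\rm diag}\,[J_m(a),J_m(a)]$ via Lemma \ref{lem6_6}, and use the identity on real blocks. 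Let $h^{\Phi}$ be the product of these maps.
\item Any finite product of maps in $\cH_{{\sf pw}1}$ lies in $\cH_{{\sf pw}1}$, because $|h(x)|\le\sum_j|h_j(P_jx)|\le C|x|$ and likewise for $h^{-1}$. Hence $\Phi_{\sf H}\stackrel{{\sf pw}1}{\cong}$ the flow generated by $\cK A^{\Phi_{\sf H}}$.
\item Do the same for $\Psi$. Since $\cK A^{\Phi_{\sf H}}$, $\cK A^{\Psi_{\sf H}}$ are similar, a linear map joins the two, and composing gives (i).
\end{enumerate}

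\emph{(i)$\Rightarrow$(iii).} Let $\Phi\stackrel{h}{\cong}\Psi$ with $h\in\cH_{{\sf pw}1}$, so $|h(y)|\le\kappa|y|$ and $|h^{-1}(y)|\le\kappa|y|$ for $|y|<r$. For the central part, $h$ is in particular a topological conjugacy. I would invoke the known topological classification of linear flows \cite{BW2} (Kuiper-type results), which gives $\Phi_{\sf C}\stackrel{{\sf lin}}{\cong}\Psi_{\sf C}$ and $h(X_\bullet^\Phi)=X_\bullet^\Psi$ for $\bullet\in\{{\sf S},{\sf U}\}$. The latter also follows directly, since $X_{\sf S}$ is characterised by $\Phi_tx\to0$ as $t\to\infty$, and this property is preserved by a conjugacy fixing $0$.

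For the stable part, I claim $h(L_m^{\Phi}(s))=L_m^{\Psi}(s)$ for all $m\in\N_0$, $s<0$, computed inside $X_{\sf S}$.
\begin{enumerate}
\item Take $x\in X_{\sf S}$. Then $\Phi_tx\in B_r(0)$ for all $t\ge T_x$.
\item Because $h$ is a \emph{conjugacy} (no time change), $|\Psi_th(x)|=|h(\Phi_tx)|\le\kappa|\Phi_tx|$ for $t\ge T_x$.
\item Hence $\Phi_tx/(e^{st}t^m)\to0$ forces $\Psi_th(x)/(e^{st}t^m)\to0$. Applying the same argument to $h^{-1}$ gives equality.
\end{enumerate}
Now $h$ restricts to a homeomorphism between the subspaces $L_m^{\Phi}(s)$ and $L_m^{\Psi}(s)$, so invariance of domain gives $\dim L_m^{\Phi}(s)=\dim L_m^{\Psi}(s)$ for all $m,s$. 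The differences $\dim L_m(s)-\dim L_{m-1}(s)$ count, for each real part $s$, the Jordan blocks of size $\ge m$, each weighted by ${\sf d}(b)$. These numbers determine the real Jordan form up to imaginary parts, that is, $\cK A^{\Phi_{\sf S}}$ up to similarity. The one subtlety is that the weights ${\sf d}(b)$ are exactly what $\cK$ records, since $\cK J_m(a+ib)$ consists of ${\sf d}(b)$ copies of $J_m(a)$. The unstable part follows by applying the same argument to the time reversals $(\Phi_{\sf U})^*$, $(\Psi_{\sf U})^*$.

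I expect the main obstacle to be the central part. The pointwise Lipschitz condition gives no control there, since orbits need not approach $0$, so I rely entirely on the topological-conjugacy classification to obtain $\Phi_{\sf C}\stackrel{{\sf lin}}{\cong}\Psi_{\sf C}$. If that citation is insufficient, I would try instead to adapt the period argument via Propositions \ref{propxy} and \ref{prop1zz}.
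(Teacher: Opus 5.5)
Your proposal is correct and follows essentially the paper's route: the blockwise Lemma \ref{lem6_6} conjugacies, with a linear map on the rest, give (iii)$\Rightarrow$(i). For (i)$\Rightarrow$(iii), you use preservation of the refined Lyapunov spaces under a pointwise Lipschitz \emph{conjugacy}, count blocks via $\dim L_m(s)$, use time reversal for the unstable part and the topological classification \cite{BW2} for the central part. The only difference is that the paper first conjugates to the flows $\widetilde{\Phi}$, $\widetilde{\Psi}$ generated by the $\cK$-forms and counts blocks there. You count directly for $\Phi$, $\Psi$ and spell out the estimate $|\Psi_t h(x)|\le\kappa|\Phi_t x|$, which the paper only invokes ``in analogy to'' Theorem \ref{lem3aol1}.
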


\begin{proof}
Since none of the statements is affected at all by linear conjugacies,
assume w.l.o.g.\ that $\Phi$, $\Psi$ are generated by
$$
\mbox{\rm diag}\, \bigl[
J_{m_1}(z_1), \ldots, J_{m_k}(z_k), A^{\Phi_{\sf C}}
\bigr] \, , \quad
\mbox{\rm diag}\, \bigl[
J_{n_1}(w_1), \ldots, J_{n_{\ell}}(w_{\ell}), A^{\Psi_{\sf C}}
\bigr] 
$$
respectively, with $k,\ell\in \N_0$, $m_1, \ldots, m_k, n_1, \ldots ,
n_{\ell}\in \N$, and $z_1, \ldots, z_k, w_1, \ldots , w_{\ell}\in
\C^-\cup \C^+$. For every $j\in \{1,\ldots , k\}$ take $h_{j}\in
\cH_{{\sf pw}1}(\R^{m_j \, {\sf d} ({\sf Im}\, z_j)})$ either as in
Lemma \ref{lem6_6} (if $z_j\not \in \R$) or else $h_j = I_{m_j}$ (if
$z_j\in \R$). With this, $\Phi \stackrel{h}{\cong}\widetilde{\Phi}$
where
$$
h = \bigtimes_{j=1}^k h_j \times I_{d_{\sf C}^{\Phi}} \in \cH_{{\sf
    pw}1} (\R^d) \, ,
$$
and $\widetilde{\Phi}$ is generated by
$$
\widetilde{A}:= \mbox{\rm diag}\, \bigl[
\cK J_{m_1}(z_1), \ldots, \cK J_{m_k}(z_k), A^{\Phi_{\sf C}}
\bigr] \, .
$$
Similarly, $\Psi \stackrel{{\sf pw}1}{\cong}\widetilde{\Psi}$, with
$\widetilde{\Psi}$ generated by
$$
\widetilde{B}:= \mbox{\rm diag}\, \bigl[
\cK J_{n_1}(w_1), \ldots, \cK J_{n_{\ell}}(w_{\ell}), A^{\Psi_{\sf C}}
\bigr] \, .
$$
By means of $\widetilde{\Phi}$, $\widetilde{\Psi}$ it will now be shown
that (i)$\Leftrightarrow$(iii). That (ii)$\Leftrightarrow$(iii) is
clear from the fact that, as recalled earlier, $\Phi$, $\Psi$ are
kinematically similar precisely if $A^{\Phi}$, $A^{\Psi}$ are.

To prove (i)$\Rightarrow$(iii) assume that $\Phi \stackrel{{\sf
    pw}1}{\cong}\Psi$. Then $\widetilde{\Phi}
\stackrel{\widetilde{h}}{\cong}\widetilde{\Psi}$ for some
$\widetilde{h}\in \cH_{{\sf pw}1}(\R^d)$ as well. Consider first
the case of stable $\Phi$, $\Psi$, so $\widetilde{\Phi}$,
$\widetilde{\Psi}$ are stable also. For every $m\in \N_0$ the function
$\ell_m^{\widetilde{\Phi}}:\R\to \N_0$ given by
$\ell_m^{\widetilde{\Phi}}(s) = \dim L_m^{\widetilde{\Phi}} (s)$ for
all $s\in \R$ is non-decreasing with $\lim_{s\to -\infty}
\ell_m^{\widetilde{\Phi}}(s)= 0$ and $\ell_m^{\widetilde{\Phi}}(0^-) =
d$, just like $\ell^{\widetilde{\Phi}}$, but unlike the latter it is
not in general right-continuous. In analogy to (\ref{eq4_6BA}),
$$
\widetilde{h} \bigl( L_m^{\widetilde{\Phi}} (s)\bigr) =
L_m^{\widetilde{\Psi}} (s) \qquad \forall m\in \N_0, s \in \R \, ,
$$
due to $\widetilde{\Phi}$,
$\widetilde{\Psi}$ being {\em conjugate\/} rather than merely equivalent.
It follows that $\ell_m^{\widetilde{\Phi}} =
\ell_m^{\widetilde{\Psi}}$, and hence for every $m\in \N$, $s\in \R$,
\begin{align*}
\sum\nolimits_{j: {\sf Re}\, z_j = s, m_j =m} {\sf d} ({\sf Im}\, z_j)
& = - \ell_{m+1}^{\widetilde{\Phi}} (s) + 2 \ell_m^{\widetilde{\Phi}}(s)
- \ell_{m-1}^{\widetilde{\Phi}}(s) \\
& =
- \ell_{m+1}^{\widetilde{\Psi}} (s) + 2 \ell_m^{\widetilde{\Psi}}(s)
                                                                         - \ell_{m-1}^{\widetilde{\Psi}}(s)  =
\sum\nolimits_{j: {\sf Re}\, w_j = s, n_j =m} {\sf d} ({\sf Im}\,
w_j)\, .
\end{align*}
The left- and right-most expressions are precisely the total numbers
of all Jordan blocks $J_m(s)$ appearing in $\widetilde{A}$,
$\widetilde{B}$ respectively. As these numbers agree for every $m\in
\N$, $s\in \R$, clearly $\widetilde{A}$, $\widetilde{B}$ are
similar. In other words, $A^{\Phi}$, $A^{\Psi}$ are kinematically
similar.

It remains to consider the case of arbitrary $\Phi$, $\Psi$. In this
case $\widetilde{\Phi} \stackrel{0}{\cong} \widetilde{\Psi}$, so
$A^{\Phi_{\sf C}}$, $A^{\Psi_{\sf C}}$ are similar; see, e.g.,
\cite[Thm.\ 1.1]{BW2}. Moreover, $\widetilde{\Phi}_{\bullet}
\stackrel{{\sf pw}1}{\cong}\widetilde{\Psi}_{\bullet}$ for $\bullet
\in \{{\sf S}, {\sf U}\}$, and applying the previous argument to
$\widetilde{\Phi}_{\sf S}$, $\widetilde{\Psi}_{\sf S}$ as well as
$\widetilde{\Phi}_{\sf U}^*$, $\widetilde{\Psi}_{\sf U}^*$ shows that
$A^{\Phi_{\sf S}}$, $A^{\Psi_{\sf S}}$ as well as $A^{\Phi_{\sf U}}$,
$A^{\Psi_{\sf U}}$ are kinematically similar. Thus $A^{\Phi}$,
$A^{\Psi}$ are kinematically similar, too. In summary, this proves that
(i)$\Rightarrow$(iii).

To see that (iii)$\Rightarrow$(i) as well, simply note that (iii), by the definition of
kinematic similarity, implies that
$\widetilde{A}$, $\widetilde{B}$ are similar, and so $\Phi \stackrel{{\sf
    pw}1}{\cong}\widetilde{\Phi} \stackrel{{\sf
    lin}}{\cong}\widetilde{\Psi}  \stackrel{{\sf
    pw}1}{\cong} \Psi$, which yields (i).
\end{proof}

Theorems \ref{thm6_4} and \ref{thm6_8} together highlight one
remarkable property common to most forms of equivalence considered in this
article and elsewhere, with the notable exception of pointwise Lip\-schitz
equivalence. The property, alluded to already in the Introduction, is
this: If $\Phi
\stackrel{\bigstar}{\thicksim} \Psi$ for any $\bigstar \in \{0,1^-,1 ,
{\sf diff}, {\sf lin}\}$, then also $\Phi
\stackrel{\bigstar}{\cong}\Psi_{*\alpha}$ for some $\alpha \in
\R\setminus \{0\}$. That ``equivalence means conjugacy up to
rescaling'' appears to have been a largely unquestioned tenet of linear
systems folklore, expressed explicitly, e.g., in \cite[Sec.\ 2.5]{CK}
and \cite[Rem.\ 7.4]{Willems}. To this, the case of pointwise Lipschitz equivalence
offers a welcome antidote: For instance, with $\Phi$, $\Psi$
generated by $I_2$, $J_2(1)$ respectively, $\Phi
\stackrel{{\sf pw}1}{\thicksim}\Psi$ by Theorem \ref{thm6_4}, and yet $\Phi \cancel{ \stackrel{{\sf
    pw}1}{\cong}} \Psi_{*\alpha}$ for every $\alpha \in \R\setminus
\{0\}$ by Theorem \ref{thm6_8}.

\subsubsection*{Acknowledgements}

The first author was partially supported by an {\sc Nserc} Discovery  
Grant.

  



\end{document}